\documentclass{article}
\usepackage[utf8]{inputenc}
\usepackage[margin=1in]{geometry} 
\usepackage{amsmath,amsthm,amssymb,amsfonts}
\usepackage{tikz}
\usetikzlibrary{shapes.geometric, arrows}
\usepackage{verbatim}
\usepackage{dsfont}
\usepackage{cite}
\usepackage{algorithm}
\usepackage{algorithmic}
\usepackage{subcaption}
\usepackage[colorlinks=true]{hyperref}
\hypersetup{urlcolor=blue, citecolor=red, linkcolor=blue}
\usepackage[capitalise,noabbrev,nameinlink]{cleveref}
\usepackage[bottom]{footmisc}
\usepackage{multirow}
\usepackage[group-digits=false]{siunitx}
\usepackage{mwe}
\usepackage{mathtools}
\usepackage[inline]{enumitem}
\usepackage[]{mdframed}
\usepackage[skins,breakable]{tcolorbox}
\usepackage{booktabs}

\theoremstyle{plain}
\newtheorem{thm}{Theorem}[section] 
\newtheorem*{theorem*}{Main Result}
\newtheorem{lem}[thm]{Lemma}
\newtheorem{prop}[thm]{Proposition}

\newtheorem{cor}[thm]{Corollary}

\theoremstyle{definition}
\newtheorem{defn}[thm]{Definition}

\newtheorem{remark}[thm]{Remark}

\numberwithin{equation}{section}

\tikzstyle{dnc_step} = [rectangle, rounded corners, minimum width=3cm, minimum height=.7cm,text centered, text width = 4.5cm, draw=white]
\tikzstyle{arrow} = [draw, -latex',rounded corners]
\tikzstyle{point} = [circle, minimum width=.5cm, minimum height=.5cm, text centered, draw=black, fill=gray!30]
\tikzstyle{line} = [draw, -latex']

\usetikzlibrary{shapes,calc,arrows}

\begin{document}
\title{Fast and Stable Arbitrary Precision Approximate Matrix Multiplication}
\author{Ioana Dumitriu\footnote{University of California, San Diego 
} \and Olga Holtz\footnote{University of California, Berkeley 
} \and Abraham Hsu\footnote{Independent Researcher 
}  \and Nicholas Liskij\footnotemark[2] 
 \and Yoav Moran\footnote{The Hebrew University of Jerusalem 
 } \and Ryan Schneider\footnotemark[2] \footnote{Corresponding author (ryan.schneider@berkeley.edu)} 
 \and Oded Schwartz\footnotemark[4]
 \and Shuncheng Yuan\footnotemark[1] 
 }
\date{}

\maketitle

\begin{abstract}
    We present several new and surprising results on the theory and practice of  Arbitrary Precision Approximate (APA) matrix multiplication, which refers to algorithms that output merely approximations to matrix multiplication even in exact arithmetic. While APA algorithms have been around for nearly half a century, and despite the fact that they have potentially lower (asymptotic) cost than many fast exact matrix multiplication schemes, they have not been studied as closely, to the extent that relatively little is known about their numerical properties. In this paper, we shine a spotlight on these algorithms by presenting three important results. First, we provide a new search algorithm for APA routines that is easy to use, automatically generates encoding matrices with functional entries, and can match or beat the best known APA algorithms for several sizes. In particular, we improve the upper bound on the border rank of $\langle 2,4,5 \rangle$ matrix multiplication. Second, we combine this insight with alternative bases to minimize the number of additions in an APA algorithm, thereby improving its leading coefficient. Finally, we prove the same kind of stability that characterizes fast exact matrix multiplication for an algorithm of Bini [\textit{Calcolo} 1980] that computes an exact product of two matrices using a modified APA scheme (with or without alternative bases).

\end{abstract}

\small
\hspace{2mm} \textbf{Keywords:} Matrix multiplication, arbitrary precision, border rank, alternative bases, floating-point stability


\indent \hspace{2mm} \textbf{MSC Class:} 65F99
\normalsize

\vspace{.3cm}



\section{Introduction}\label{section: intro}
In 1969, Strassen \cite{Strassen1969} fundamentally changed the field of numerical linear algebra when he demonstrated that the classical algorithm for matrix multiplication 
is not optimal. Strassen's discovery sparked a still ongoing effort to derive, analyze, and implement \textit{fast} matrix multiplication algorithms, which are broadly defined as any matrix multiplication routine with computational complexity $O(n^{\omega})$ for $2 \leq \omega < 3$. At the time of writing, Dupont et al.\ \cite{dupont2026} hold the record for the smallest value of $\omega$ achieved by an algorithm, and it remains open whether the lower bound $\omega \geq 2$ is attainable. Research aimed at reaching this lower bound (or proving it cannot be done) is motivated by both the wide applicability of matrix multiplication in scientific computing/machine learning and the fact that many linear algebra operations have been shown to reduce to matrix multiplication \cite{fastLA}, including recently nonsymmetric eigenvalue problems \cite{Banks23,Demmel26}.   \\
\indent Most practical fast matrix multiplication algorithms, including Strassen's \cite{Strassen1969}, are based on routines for multiplying small matrices. The key idea here is straightforward:\ an algorithm for multiplying $m_0 \times k_0$ and $k_0 \times n_0$ matrices can be used to compute the product of arbitrary $A \in {\mathbb C}^{m \times k}$ and $B \in {\mathbb C}^{k \times n}$ by breaking the input into blocks
\begin{equation}
    A = \begin{pmatrix}
        A_{11}& \cdots &A_{1k_0} \\
    
        \vdots & & \vdots \\
        A_{m_01} & \cdots & A_{m_0k_0} 
        \end{pmatrix}
        \; \; \; \; \; \; B  = \begin{pmatrix}
        B_{11}& \cdots &B_{1n_0} \\
    
        \vdots & & \vdots \\
        B_{k_01} & \cdots & B_{k_0n_0}
        \end{pmatrix}
\end{equation}
and applying the algorithm at the block level. In this context, each scalar multiplication in the original ``base-case" algorithm becomes a matrix multiplication of size $(m /m_0) \times (k/k_0)$ by $(k/k_0) \times (n/n_0)$, assuming for simplicity that $m_0$, $k_0$, and $n_0$ divide $m$, $k$, and $n$, respectively. This procedure can then be used to handle the smaller block multiplications, continuing recursively either until the base case $m_0 \times k_0$ by $k_0 \times n_0$ is reached or, more commonly, until the block size is small enough to handle with classical matrix multiplication. The result is a (general) matrix multiplication algorithm of complexity $O((mkn)^{\log_{m_0k_0n_0}(t)})$, where $t$ is the number of scalar multiplications in the base-case routine. When $m = n = k$, this simplifies to $O(n^{\omega})$ for $\omega = 3\log_{m_0k_0n_0}(t)$. Throughout, we use $\langle m,k,n \rangle$ and $\langle m,k,n; t \rangle$ to refer, respectively, to $m\times k$ by $k \times n$ matrix multiplication and an algorithm for it that uses $t$ scalar multiplications.  \\
\indent The above observations underpin one branch of research on fast matrix multiplication, which is focused on searching for optimal routines for small matrices \cite{flip_graph,perminov2026,Smirnov2013,BALLARD2019,alphaevolve, Schwartz_Zwecher26,Khoruzhii26,Drevet2011,LADERMAN1992557,Pan,nature_MM}, where ``optimality" is measured by the number of scalar multiplications used. 
Algorithms derived this way are sometimes referred to as \textit{feasible} fast matrix multiplication because, while they typically do not achieve the best known upper bounds on $\omega$, they are applicable to inputs of more manageable size, making them candidates for practical use.\footnote{Indeed, algorithms that attain the best exponents -- e.g., \cite{dupont2026} -- are only faster on ``galactically" large matrices.} Accordingly, significant effort has been placed on optimizing these routines beyond their asymptotic complexity (see  \cite{alternative_bases}). \\
\indent The search for small matrix multiplication routines led to the discovery of Arbitrary Precision Approximate or APA matrix multiplication algorithms in the late 70's. First coined by Bini et al.\ \cite{BINI1979234}, APA matrix multiplication refers to a class of bilinear algorithms that, given input matrices $A$ and $B$ and an evaluation parameter $\varepsilon$, computes \begin{equation}\label{eqn: apa_output}
      AB + \varepsilon R_1 + \varepsilon^2 R_2 + \cdots + \varepsilon^{d} R_{d}, 
\end{equation}
where $R_1, \ldots, R_d$ are residual matrices (that depend on $A$ and $B$) and $d$ is the \textit{degree} of the algorithm. Note that the approximation obtained from an APA routine can be made arbitrarily accurate by taking $\varepsilon \rightarrow 0$. An example of an APA algorithm for multiplying $3 \times 2$ and $2 \times 2$ matrices -- alongside both classical and fast exact alternatives -- is shown in \cref{fig: alg_comp}, which in particular emphasizes how each algorithm obtains its output (exact or approximate) from a certain number of scalar multiplications. 

\begin{figure*}[t!]
    \centering
    \begin{subfigure}[t]{\textwidth}
        \centering 
        \[ \begin{pmatrix} a_{11} & a_{12} \\
        a_{21} & a_{22} \\ a_{31} & a_{32} \end{pmatrix}  \times \begin{pmatrix} b_{11} & b_{12} \\ b_{21} & b_{22} \end{pmatrix}  = \begin{pmatrix} c_{11} & c_{12} \\ c_{21} & c_{22} \\ c_{31} & c_{32} \end{pmatrix} \]
    \end{subfigure}
    
    \vspace{0.5cm}
    
    \begin{subfigure}[t]{0.3\textwidth}
        \centering
        \caption{Classical}
        \vspace{-.3cm}
        \[ \begin{aligned}
            m_1 &  = a_{11}b_{11} \\
            m_2 & = a_{12}b_{21} \\
            m_3 & = a_{11}b_{12} \\
            m_4 & = a_{12}b_{22} \\
            m_5 & = a_{21}b_{11} \\
            m_6 & = a_{22}b_{21} \\
            m_7 & = a_{21}b_{12} \\
            m_8 & = a_{22}b_{22} \\
            m_9 & = a_{31}b_{11} \\
            m_{10} & = a_{32}b_{21} \\
            m_{11} & = a_{31}b_{12} \\
            m_{12} & = a_{32}b_{22} 
        \end{aligned}\]
        \[ \begin{aligned}
            c_{11} &= m_1+m_2 \\
            c_{12} & = m_3+m_4 \\
            c_{21} & = m_5 +m_6 \\
            c_{22} & = m_7+m_8 \\
            c_{31}& = m_9+m_{10} \\
            c_{32} & = m_{11}+m_{12}
        \end{aligned}\]
    \end{subfigure}%
    \vrule
    \begin{subfigure}[t]{0.35\textwidth}
        \centering
        \caption{Fast exact \cite{Strassen1969}}
        \vspace{-.3cm}
        \[ \begin{aligned}
            m_1& = (a_{11}+a_{22})(b_{11}+b_{22}) \\
            m_2 & = (a_{21}+a_{22})b_{11} \\
            m_3 & = a_{11}(b_{12}-b_{22}) \\
            m_4 &= a_{22}(b_{21}-b_{11}) \\
            m_5 & = (a_{11}+a_{12})b_{22} \\
            m_6 & = (a_{21}-a_{11})(b_{11}+b_{12}) \\
            m_7 & = (a_{12}-a_{22})(b_{21}+b_{22}) \\
            m_8 & = a_{31}b_{11} \\
            m_9 & = a_{32}b_{21} \\
            m_{10} & = a_{31}b_{12} \\
            m_{11} & = a_{32}b_{22} \\
            \phantom{m_{12}} \\
        \end{aligned}\]
        \[ \begin{aligned}
            c_{11} &= m_1+m_4-m_5+m_7 \\
            c_{12} & = m_3+m_5 \\
            c_{21} &= m_2+m_4 \\
            c_{22} & = m_1-m_2+m_3+m_6 \\
            c_{31} &= m_8+m_9 \\
            c_{32} & = m_{10}+m_{11}
        \end{aligned}\]
    \end{subfigure}%
    \vrule
    \begin{subfigure}[t]{0.35\textwidth}
        \centering
        \caption{APA \cite{BINI1979234}}
        \vspace{-.3cm}
        \[ \begin{aligned}
            m_1 & =  (a_{21}+\varepsilon a_{11})b_{11} \\
            m_2 & = a_{12}b_{12} \\
            m_3 & = a_{21}(b_{11}+b_{12}+\varepsilon b_{21}) \\
            m_4 & = (a_{12}+a_{21})(b_{12}+\varepsilon b_{21}) \\
            m_5 & = (a_{12}+\varepsilon a_{11})(b_{12}+\varepsilon b_{22}) \\
            m_6 & = (a_{22}+\varepsilon a_{32})b_{21} \\
            m_7 & = a_{31}b_{22} \\
            m_8 & = a_{22}(b_{21}+b_{22}+ \varepsilon b_{11}) \\
            m_9 & = (a_{22}+a_{31})(b_{22}+\varepsilon b_{11}) \\
            m_{10} & = (a_{31}+\varepsilon a_{32})(b_{22} + \varepsilon b_{12}) \\
            \phantom{m_{11}} \\
            \phantom{m_{12}} \\
        \end{aligned}
        \]
        \[ \begin{aligned}
            c_{11} & = (m_1-m_2-m_3+m_4)/\varepsilon \\
            c_{12} & = (m_5-m_2)/\varepsilon \\
            c_{21} & = m_1+m_6 \\
            c_{22} & = -m_2+m_4-m_7+m_9 \\
            c_{31} & = (m_6-m_7-m_8+m_9)/\varepsilon  \\
            c_{32} & =  (m_{10}-m_7)/\varepsilon
        \end{aligned} \]
    \end{subfigure}
    \vspace{.2cm}
    \caption{Three algorithms for $\langle 3,2,2\rangle$ matrix multiplication using $12$, $11$, and $10$ scalar multiplications, respectively. The fast exact algorithm is obtained by applying Strassen's $\langle 2,2,2; 7\rangle$ routine to compute the upper $2 \times 2$ block of the output, while the APA algorithm is exact as $\varepsilon \rightarrow 0$.}
    \label{fig: alg_comp}
\end{figure*}

From \cref{fig: alg_comp}  we see that APA matrix multiplication can (at least in some cases) be accomplished with fewer scalar products than exact matrix multiplication. In fact, Hopcroft and Kerr \cite{Hopcroft_Kerr} showed that computing the exact product of a $3 \times 2$ and $2 \times 2$ matrix requires at least 11 scalar multiplications and hence that no exact algorithm is capable of beating the 10 multiplication APA routine of Bini et al.\cite{BINI1979234}. One way to understand this is through the lens of tensor decomposition; exact and APA algorithms for $m \times k$ by $k \times n$ matrix multiplication can be interpreted as exact and approximate decompositions of the $\langle m,k,n \rangle $ matrix multiplication tensor, which is defined as 
\begin{equation}\label{eqn: MM_tensor}
    {\mathcal M}_{\langle m,k,n \rangle} \coloneqq \sum_{i_1 = 1}^m \sum_{i_2 = 1}^k \sum_{i_3=1}^n E_{i_1i_2}^{(m,k)} \otimes E_{i_2i_3}^{(k,n)} \otimes E_{i_3i_1}^{(n,m)}.
\end{equation}
Here, $\otimes$ is the Kronecker product and  $E_{ij}^{(m,n)}$ is an $m \times n$ standard basis matrix, whose only nonzero entry is a one in row $i$ and column $j$.\footnote{Evaluating \eqref{eqn: MM_tensor} produces a \textit{matricized} version of the three-way tensor ${\mathcal M}_{\langle m,k,n \rangle}$, which has dimensions $mk \times kn \times mn$. The decomposition that defines ${\mathcal M}_{\langle m,k,n \rangle}$ corresponds to the classical algorithm for $\langle m,k,n \rangle$ matrix multiplication.}
Each scalar multiplication in an exact/APA algorithm contributes a rank-one tensor  (e.g., one term in the sum \eqref{eqn: MM_tensor}) to the corresponding decomposition of ${\mathcal M}_{\langle m,k,n \rangle}$. Accordingly, the number of scalar multiplications in an exact algorithm provides an upper bound on the \textit{rank} of ${\mathcal M}_{\langle m,k,n \rangle}$, that is, the minimum number of rank-one tensors needed to construct ${\mathcal M}_{\langle m,k,n \rangle}$. The same can be said of an APA algorithm and the \textit{border} rank of ${\mathcal M}_{\langle m,k,n \rangle}$, which corresponds to the minimum number of rank-one tensors in a sequence of decompositions whose limit is ${\mathcal M}_{\langle m,k,n\rangle}$. In this language, the above example can be summarized as follows:\ the rank and border rank of ${\mathcal M}_{\langle 3,2,2 \rangle}$ are 11 and 10 respectively,\footnote{Technically, the algorithm of Bini et al.\ \cite{BINI1979234} only establishes that the border rank is bounded above by 10. That this is in fact the true border rank was shown by Conner, Harper, and Landsberg \cite{Conner_Harper_Landsberg_2023}. } and corresponding decompositions can be obtained from the algorithms in \cref{fig: alg_comp}. 
For more background on tensor (border) rank and the notation used here, see \cite{Kolda_Bader} and \cite[Section 2]{Benson_Ballard}. \\
\indent  APA routines for small matrices can be used recursively as done above for exact algorithms. This yields APA algorithms for arbitrary $A \in {\mathbb C}^{m \times k}$ and $B \in {\mathbb C}^{k \times n}$ of degree $dL$, where 
\begin{equation}\label{eqn: L}
    L  \le \left\{ \log_{m_0}(m), \log_{n_0}(n), \log_{k_0}(k) \right\} 
\end{equation}
is the number of recursive steps applied. Since the complexity of these algorithms depends primarily on the number of scalar multiplications in the base case routine -- and this is potentially smaller than what can be achieved with a fast exact algorithm, as demonstrated in \cref{fig: alg_comp} -- APA matrix multiplication can be, asymptotically, faster than exact matrix multiplication. Despite this, APA algorithms are typically not used in practice. One reason is apparent from \cref{fig: alg_comp}:\  APA matrix multiplication requires dividing by $\varepsilon$, which means we cannot hope to use it in the limit $\varepsilon \rightarrow 0$ to obtain the exact product $C = AB$ (at least in the presence of floating-point roundoff).  \\
\indent Nevertheless, as outlined by Bini in \cite{Bini1980}, an APA algorithm \textit{can} be used to obtain an exact product of two matrices. 
Suppose that we evaluate a degree $d$ APA algorithm $d+1$ times with distinct evaluation parameters $\varepsilon_1, \ldots, \varepsilon_{d+1}$. If $C_i$ is the output of the $i$-th evaluation, and moreover if $\alpha_1, \ldots, \alpha_{d+1}$ solve the Vandermonde system
\begin{equation}\label{eqn: vandermonde_system_d}
        \begin{pmatrix} 1 & 1 & \cdots & 1 \\ 
        \varepsilon_1 & \varepsilon_2 & \cdots & \varepsilon_{d+1} \\
       \vdots & \vdots & & \vdots \\
       \varepsilon_1^{d} & \varepsilon_2^{d} & \cdots & \varepsilon_{d+1}^{d}
        \end{pmatrix} \begin{pmatrix} \alpha_1 \\ \alpha_2 \\ \vdots \\ \alpha_{d+1} \end{pmatrix} = \begin{pmatrix} 1 \\ 0 \\ \vdots \\ 0 \end{pmatrix},
\end{equation} 
then by construction
\begin{equation}\label{eqn: exactify}
    \alpha_1C_1 + \cdots + \alpha_{d+1}C_{d+1} = \left( \sum_{i = 1}^{d+1} \alpha_i \right)AB + \left(\sum_{i=1}^{d+1} \alpha_i\varepsilon_i \right) R_1 + \cdots +  \left(\sum_{i=1}^{d+1} \alpha_i\varepsilon_i^d \right) R_d = AB.
\end{equation}
In the recursive setting, \eqref{eqn: L} implies that the number of evaluations necessary to obtain the exact product of arbitrary $A \in {\mathbb C}^{m \times k}$ and $B \in {\mathbb C}^{k \times n}$ has logarithmic dependence on the size of the matrices.\footnote{Note that making the base case routine exact via \eqref{eqn: exactify} and using that recursively would lead to a slower algorithm.} That is, the complexity of an APA matrix multiplication algorithm 
is $O(L \cdot (mkn)^{\log_{m_0k_0n_0}(t)})$, where $L$ must satisfy \eqref{eqn: L} with equality to get the improved exponent. \\
\indent The extra logarithmic factor here implies that, while the exponent $\omega = \log_{m_0k_0n_0}(t)$ may be lower than what can be achieved by  fast exact $\langle m_0,k_0,n_0 \rangle$ matrix multiplication, APA algorithms, evaluated enough times to get an exact output, can only offer practical speedups on (possibly galactically) large matrices. For this reason, they have historically been considered mainly of theoretical interest, primarily as a source of upper bounds on the border rank of ${\mathcal M}_{\langle m,k,n \rangle}$. This is no doubt compounded by the fact that techniques for deriving APA algorithms are poorly documented in the literature. The most popular search algorithm applies some version of (nonlinear or alternating) least squares to the tensor decomposition problem. While least squares is similarly used to find fast exact algorithms (see e.g., \cite{Smirnov2013,BALLARD2019}), this approach has a unique limitation in the APA setting:\ the need to convert numerical decompositions into ones with functional entries. There is no known algorithmic way to do this. Indeed, Smirnov uses ``various choice heuristics" to obtain many of the current best-known APA algorithms in \cite{Smirnov2013}. Other open questions concern the practical details of using APA matrix multiplication. For example, recalling that fast exact matrix multiplication is well-known to be (norm-wise) stable \cite{fastLA}, can the same be said of the ``exactification" procedure introduced by Bini \cite{Bini1980}? Additionally, can we accelerate an APA algorithm using the same tools applied to fast exact matrix multiplication, specifically the alternative basis approach of Holtz et al.\ \cite{alternative_bases}? \\
\indent The goal of this work is to address all three of these questions. Our first contribution is a new search algorithm for APA matrix multiplication routines that can be easily run on small values of $m_0$, $k_0$, and $n_0$, requires minimal computing resources, and automatically generates decompositions with functional entries (e.g., of the form shown in \cref{fig: APA_rep}). Alongside this search algorithm, we also extend both the aforementioned numerical stability results and the alternative basis procedure to the APA setting. 
One strong motivation for this work  is the rapidly changing (and varied) set of demands placed on matrix multiplication kernels by machine learning applications (see the discussion in \cite{grading_the_blas}), which warrants revisiting every option in our algorithmic toolkit. Note in particular that an APA algorithm may only be slower than fast exact matrix multiplication in practice if we evaluate it enough times to get an exact product; in applications where only a low-precision approximate product is necessary -- and hence evaluating an APA algorithm $O(1)$ may be sufficient -- APA  matrix multiplication will be competitive. Indeed, Ballard et al.\ \cite{APA_NN} have already obtained promising results in this direction in the context of neural network training.

\subsection{Our Contributions}
The main contributions of this paper can be summarized as follows:
\begin{enumerate}
    \item In \cref{section: search} we present a new algorithm for generating APA matrix multiplication routines. The key insight is the following:\ by applying a symbolic De-Groote transformation and deleting one rank-one term in the corresponding tensor decomposition, a fast exact algorithm yields an APA routine with one fewer scalar multiplication (assuming the product of the factors in the deleted term is nilpotent). Using the recently introduced flip graph \cite{flip_graph,perminov2026} to find exact algorithms where this condition is satisfied, we obtain new APA routines for a variety of matrix sizes, including one that improves the previously best-known rank. 
    \item In \cref{section: alternative_basis}, we extend the alternative basis work of \cite{ALittleFaster2020,alternative_bases} to APA matrix multiplication. In doing so, we demonstrate that fast, symbolic basis transformations can be used to ``sparsify" APA algorithms and therefore improve the leading coefficient in their arithmetic complexities. 
    \item In \cref{section: stability}, we show that the approximate-to-exact procedure of Bini \cite{Bini1980} is stable (under some mild assumptions). In particular, we prove a weak forward stability bound for this procedure, in floating-point arithmetic and including the fast basis transformations from \cref{section: alternative_basis}, that matches the format of existing bounds for fast exact matrix multiplication \cite{fastMM,Schwartz2026_journal}. To our knowledge, these are the first stability bounds of any kind for APA matrix multiplication. We also comment on the stability of evaluating an APA algorithm a fixed number of times.  
    \item Finally, \cref{section: examples} presents a handful of numerical examples that validate the theoretical results from the previous sections. 
\end{enumerate}

Our results are captured by \cref{tab:results_summary,tab:results_summary2}, which show, respectively, the best ranks achieved in our search (vs.\ existing results in the literature) and  improvements to the leading coefficient of APA algorithms obtained by sparsifying with alternative bases. 

\renewcommand{\arraystretch}{1.2}
\begin{table}[t]
    \centering
    \begin{tabular}{|c|c|c|c|}
        \hline 
          \textbf{Size} & \textbf{Border Rank} & \textbf{Terms in Remainder} & \textbf{Exponent}   \\
        $\langle m_0,k_0,n_0\rangle $ & (vs.\ \cite{Smirnov2013,Smirnov2015,Smirnov2014}) & $i : R_i \not\equiv 0$ & $3\log_{m_0k_0n_0}(t)$ \\
        \hline
        $\langle 2,2,3\rangle $ & 10 (10) & 3, 6 & 2.7799  \\
        $\langle 2,3,3\rangle$ & 15 (14) & 3, 6 & 2.8108 \\
        $\langle 2,2,5\rangle $ & 17 (16) & 3, 4, 5, 6, 7, 8, 9 & 2.8372 \\
        $\langle 2,2,6\rangle$ & 20 (20) & 3, 4 & 2.8279  \\
        $\langle 3,3,3\rangle $ & 21 (20) &  1, 2, 3, 4, 5 & 2.7712  \\
        $\langle 2,3,5\rangle $ & 24 (23) & 3, 4, 5, 6, 7, 8, 9 & 2.8032  \\
        $\langle 2,4,4\rangle $ & 25 (24) & 3, 6 & 2.7863  \\
        $\langle 2,2,8\rangle $ & 27 (26) & 3, 4 & 2.8529  \\
        $\langle 3,3,4\rangle $ & 27 (27) & 3, 4, 5, 6, 7, 8, 9 & 2.7592  \\
        $\langle 2,3,6\rangle $ & 29 (28) & 3, 4, 6, 7 & 2.8190 \\
        $\langle 2,4,5\rangle $ & \textbf{31} (32) & 3, 4, 5, 6, 7, 8 & 2.7927  \\
        $\langle 2,3,7\rangle $ & 34 (33) & 3, 4, 5, 6 & 2.8304  \\
        $\langle 3,3,5\rangle $ & 35 (34) & 3, 4, 5, 6, 7, 8, 9 & 2.8019  \\
        $\langle 2,4,6\rangle $ & 37 (37) & 3, 4, 6, 7, 9, 10, 12, 13 & 2.7983 \\
        $\langle 2,5,5\rangle $ & 39 (39) & 3, 6 & 2.8095  \\
        $\langle 2,5,6\rangle $ & 46 (46) & 3, 4, 6, 7, 9, 10 & 2.8053  \\
        $\langle 3,3,7\rangle $ & 48 (48) & 3, 4, 5, 6, 7, 8, 9, 11 & 2.8031 \\
        \hline 
    \end{tabular}
    \caption{The best border rank $t$ achieved by the search algorithm discussed in \cref{section: search} for a variety of dimension triplets $\langle m_0,k_0,n_0 \rangle$. For reference, we list in parentheses the best border rank attained by Smirnov in \cite{Smirnov2013,Smirnov2015,Smirnov2014}. 
    For each size, we also describe the remainder for one $ \langle m_0,k_0,n_0;t \rangle$ scheme. Corresponding encoding matrices can be found at \href{https://github.com/nliskij/border-rank}{https://github.com/nliskij/border-rank}.}\label{tab:results_summary}
\end{table}
\renewcommand{\arraystretch}{1}

\begin{table}[p]   
    \centering
    \renewcommand{\arraystretch}{1.3}
    \begin{minipage}{15cm}
        \centering
        \begin{tabular}{|c|c|c|c|} 
        \hline 
          \textbf{APA Algorithm} & 
          \multicolumn{2}{c|}{\textbf{Leading Coefficient}} & \textbf{Saving}  \\
        $\langle m_0,k_0,n_0;t\rangle $ & 
        \multicolumn{1}{c}{Original} & \multicolumn{1}{c|}{Sparsified}& \\
        \hline
        $\langle 2,2,3; 10\rangle $ & 
        24.583 & 6.224  & 74.68\%\\
        \hline
        $\langle 2,3,3;15\rangle$ & 
        26.944 & 6.234 &76.86\% \\
        \hline
        $\langle 2,2,5;17\rangle $ & 
        35.242 & 9.366 & 73.42\%\\
        \hline 
        $\langle 2,2,6;20\rangle$ & 
        30.125 & 5.448 & 81.92\%\\
        \hline 
        $\langle 3,3,3; 20\rangle $ & 
        26.364 & 15.182  &42.41\% \\
        \hline 
        $\langle 3,3,3;21\rangle $ & 
        40.833 & 7.083 & 82.65\%\\
        \hline 
        $\langle 2,3,5;24\rangle $ & 
        45.587 & 7.048 & 84.54\%\\
        \hline
        $\langle 2,4,4;25\rangle $ & 
        56.294 & 9.808 & 82.58\%\\
        \hline 
        $\langle 2,2,8; 27\rangle $ & 
        25.249 & 5.259 & 79.17\%\\
        \hline
        \multirow{3}{*}{$\langle 3,3,4; 27 \rangle $} & 
        60.678 & 7.967 & 86.87\%\\
        \cline{2-4}
        & 59.878 & 7.967  & 86.69\%\\
          \cline{2-4}
        & 60.256 & 8.089  & 86.58\%\\
        \hline
        $\langle 2,3,6; 29\rangle $ & 
        48.227 & 10.041 & 79.18\%\\
        \hline 
        \multirow{4}{*}{$\langle 2,4,5;31\rangle $} & 
        50.218 & 7.546 & 84.97\%\\
         \cline{2-4}
         & 100.409 & 8.419  & 91.62\%\\
          \cline{2-4}
        & 64.990 & 7.638  & 88.25\%\\
         \cline{2-4}
        & 61.622 & 7.597  & 87.67\%\\
        \hline
        $\langle 2,3,7;34 \rangle $ & 
        80.157 & 8.606 & 89.26\%\\
        \hline 
        $\langle 3,3,5; 35\rangle $ & 
        49.627 & 7.537 & 84.81\%\\
        \hline
        \multirow{3}{*}{$\langle 2,4,6; 37\rangle $} & 
        89.476 & 7.925 & 91.14\%\\
         \cline{2-4}
        & 
        124.032 & 9.336  & 92.47\%\\
         \cline{2-4}
        & 
        83.858 & 8.095  & 90.35\%\\
        \hline
        $\langle 2,5,5; 39\rangle $ & 
        128.862 & 8.003 & 93.79\%\\
        \hline
        \multirow{3}{*}{$\langle 2,5,6; 46\rangle $} & 
        89.192 & 13.329 & 85.06\%\\
         \cline{2-4}
         & 
         104.323 & 15.395  & 85.24\%\\
          \cline{2-4}
        & 
        87.869 & 13.424  & 84.72\%\\
        \hline 
        \multirow{2}{*}{$\langle 3,4,5;46 \rangle $} & 
        51.516 & 7.801  & 84.86\%\\
         \cline{2-4}
        & 
        77.421 & 8.783  & 88.66\%\\
        \hline
        $\langle 3,3,7; 48 \rangle $ & 
        78.339 & 8.462 & 89.20\%\\
        \hline
        \multirow{2}{*}{$\langle 3,3,8;54 \rangle $} & 
        136.844 & 15.028  & 89.02\%\\
         \cline{2-4}
        & 
        109.089 & 14.278  & 86.91\%\\
        \hline 
    \end{tabular}
    \captionof{table}{Improvements to the leading coefficients of a variety of APA algorithms obtained by applying the alternative basis approach discussed in \cref{section: alternative_basis}. With the exception of the $\langle 3,3,3;20\rangle$ scheme -- which is due to Smirnov \cite{Smirnov2013} -- all algorithms were discovered via the search described in \cref{section: search} and can be found in our repository (see \href{https://github.com/nliskij/border-rank}{https://github.com/nliskij/border-rank}).}\label{tab:results_summary2}
    \end{minipage}
\end{table}
\renewcommand{\arraystretch}{1}

\subsection{Notation and Conventions}
In the remainder of the paper, the matrices $A$, $B$, and their product $C = AB$ are assumed to be complex. As mentioned above, we use $\langle m_0,k_0,n_0 \rangle$ to refer to $m_0 \times k_0$ by $k_0 \times n_0$ matrix multiplication (either exact or approximate) and $\langle m_0, k_0, n_0; t \rangle$ to refer to algorithms for this problem that use $t$ scalar multiplications. Note that $\langle m_0,k_0,n_0 \rangle$ is equivalent to $\langle m_0, n_0,k_0 \rangle$, $\langle k_0, m_0, n_0 \rangle$, $ \langle k_0, n_0, m_0 \rangle$, $\langle n_0, m_0, k_0 \rangle$, and $\langle n_0, k_0, m_0 \rangle$ in the sense that an algorithm for one can be adapted to an algorithm for any of the others (with the same number of scalar multiplications but possibly different additive complexity) \cite{Hopcroft_duality}. \\
\indent It is often convenient to talk about a specific $\langle m_0,k_0,n_0;t \rangle$ algorithm, APA or exact, in terms of a trio of \textit{encoding matrices} $U \in {\mathbb C}^{m_0k_0 \times t}$, $V \in {\mathbb C}^{k_0n_0 \times t}$, and $W \in {\mathbb C}^{m_0n_0 \times t}$, which record the dependence of each scalar multiplication on the entries of $A$ and $B$ (in the case of $U$ and $V$) and the dependence of each entry of $C$ on the scalar multiplications (in the case of $W$). To be specific, these encoding matrices satisfy
\begin{equation}\label{eqn: encoding_relation}
     c = W(U^Ta \odot V^Tb),
\end{equation}
where $\odot$ is the Hadamard product and $a$, $b$, and $c$ are (row-wise) vectorized copies of $A$, $B$, and $C$.\footnote{If, for example, $A$, $B$, and $C$ are the $3 \times 2$ and $2 \times 2$ matrices pictured in \cref{fig: alg_comp}, we have $a^T = (a_{11}, a_{12}, a_{21}, a_{22}, a_{31},a_{32})$, $b^T = (b_{11},b_{12},b_{21},b_{22})$, and $c^T = (c_{11},c_{12},c_{21},c_{22},c_{31},c_{32})$.} In the APA case, these encoding matrices are dependent on the evaluation parameter $\varepsilon$ and are therefore denoted $U(\varepsilon)$, $V(\varepsilon)$ and $W(\varepsilon)$. An example of these matrices for the $\langle 3,2,2;10\rangle$ APA algorithm of Bini et al.\ \cite{BINI1979234} is given in \cref{fig: APA_rep}. 

\begin{figure*}[t!]
    \centering
    \begin{subfigure}[t]{\textwidth}
        \centering 
        \[ U(\varepsilon) = \begin{pmatrix} 
         \varepsilon & 0 & 0&  0& \varepsilon & 0 & 0& 0& 0 & 0\\
         0 &  1& 0& 1 &  1 &  0& 0& 0& 0& 0\\
         1 &  0& 1& 1 &  0 &  0& 0& 0& 0& 0\\
         0 & 0 & 0& 0 &  0 & 1 & 0& 1& 1& 0\\
         0 & 0 & 0& 0&  0 & 0 & 1& 0& 1& 1\\
         0 & 0 & 0 & 0 & 0 & \varepsilon & 0& 0& 0& \varepsilon\\
        \end{pmatrix}  \; \; \; \; \; V(\varepsilon) =
        \begin{pmatrix} 
        1 & 0 & 1 & 0 & 0 & 0 & 0 & \varepsilon & \varepsilon & 0 \\
        0 & 1 & 1 & 1 & 1 & 0 & 0 & 0 & 0 & \varepsilon \\
        0 & 0 & \varepsilon & \varepsilon & 0 & 1 & 0 & 1 & 0 & 0 \\
        0 & 0 & 0 & 0 & \varepsilon & 0 &  1 & 1 & 1 & 1
        \end{pmatrix}
        \]
        \[ W(\varepsilon) = \begin{pmatrix}
            \varepsilon^{-1} & - \varepsilon^{-1} & - \varepsilon^{-1} & \varepsilon^{-1} & 0 & 0 & 0 & 0 & 0 & 0 \\
            0 & -\varepsilon^{-1} & 0 & 0 & \varepsilon^{-1} &  0 & 0 & 0 & 0 & 0 \\
            1 & 0 & 0 & 0 & 0 & 1 & 0 & 0 & 0 & 0 \\
            0 & -1 & 0 & 1 & 0 & 0 & -1 & 0 & 1 & 0 \\
            0 & 0 & 0 & 0   & 0 & \varepsilon^{-1} & - \varepsilon^{-1} & - \varepsilon^{-1} & \varepsilon^{-1} & 0 \\
            0 & 0 & 0 & 0 & 0 & 0 & -\varepsilon^{-1} & 0 & 0 & \varepsilon^{-1}
        \end{pmatrix}\]
        \caption{Parameterized encoding matrices}
    \end{subfigure}
    
    \begin{subfigure}[t]{\textwidth}
        \centering
        \[ \aligned {\mathcal M}_{\langle 3,2,2\rangle } \approx \begin{pmatrix} \varepsilon & 0 \\ 1 & 0 \\ 0 & 0 \end{pmatrix} &\otimes \begin{pmatrix} 1 & 0 \\ 0 & 0 \end{pmatrix} \otimes \begin{pmatrix} \varepsilon^{-1} & 1 & 0  \\ 0 & 0  & 0 \end{pmatrix} + \begin{pmatrix} 0 & 1 \\ 0 & 0 \\ 0 & 0 \end{pmatrix} \otimes \begin{pmatrix} 0 & 1 \\ 0 & 0 \end{pmatrix} \otimes \begin{pmatrix} -\varepsilon^{-1}& 0 & 0 \\
        -\varepsilon^{-1} & -1 & 0 \end{pmatrix} \\
        & + \begin{pmatrix} 0 & 0 \\ 1 & 0 \\ 0 & 0 \end{pmatrix} \otimes \begin{pmatrix} 1 & 1 \\ 
        \varepsilon & 0 \end{pmatrix} \otimes \begin{pmatrix} - \varepsilon^{-1} & 0 & 0 \\
        0 & 0 & 0 \end{pmatrix} + \begin{pmatrix}
            0 & 1 \\ 1 & 0 \\ 0 & 0 
        \end{pmatrix} \otimes \begin{pmatrix}
            0 & 1 \\ \varepsilon & 0 
        \end{pmatrix} \otimes \begin{pmatrix} 
        \varepsilon^{-1} & 0 & 0 \\ 0 & 1 & 0 
        \end{pmatrix} \\
        &  + \begin{pmatrix} \varepsilon & 1 \\ 0 & 0 \\ 0 & 0 \end{pmatrix} \otimes \begin{pmatrix} 0 & 1 \\ 0 & \varepsilon \end{pmatrix} \otimes \begin{pmatrix} 0 & 0 & 0 \\ \varepsilon^{-1} & 0 & 0 \end{pmatrix} + \begin{pmatrix}
            0 & 0 \\ 0 & 1 \\ 0 & \varepsilon 
        \end{pmatrix} \otimes \begin{pmatrix}
            0 & 0 \\ 1 & 0 \end{pmatrix} \otimes \begin{pmatrix}
            0 & 1 & \varepsilon^{-1} \\
            0 & 0 & 0 
        \end{pmatrix} \\
        &  + \begin{pmatrix} 0 & 0 \\ 0 & 0 \\ 1 & 0 \end{pmatrix} \otimes \begin{pmatrix}
            0 & 0 \\ 0 & 1 
        \end{pmatrix} \otimes \begin{pmatrix}
        0 & 0 & -\varepsilon^{-1} \\
        0 & -1 & -\varepsilon^{-1} 
        \end{pmatrix} + \begin{pmatrix} 0 & 0 \\ 0 & 1 \\ 0 & 0 \end{pmatrix} \otimes \begin{pmatrix} \varepsilon & 0 \\ 1 & 1 \end{pmatrix} \otimes \begin{pmatrix} 0 & 0 & -\varepsilon^{-1} \\
        0 & 0 & 0 \end{pmatrix} \\
        & + \begin{pmatrix} 0 & 0 \\ 0 & 1 \\ 1 & 0 \end{pmatrix} \otimes \begin{pmatrix}
            \varepsilon & 0 \\ 0 & 1 
        \end{pmatrix} \otimes \begin{pmatrix}
            0 & 0 & \varepsilon^{-1} \\  0 & 1 & 0 
        \end{pmatrix} + \begin{pmatrix}
            0 & 0 \\ 0 & 0 \\ 1 & \varepsilon
        \end{pmatrix} \otimes \begin{pmatrix} 0 & \varepsilon \\ 0 & 1 \end{pmatrix} \otimes \begin{pmatrix} 0 & 0 & 0 \\ 0 & 0 & \varepsilon^{-1} \end{pmatrix}
        \endaligned \]
        \caption{Tensor decomposition}
    \end{subfigure}%
    
    \caption{Two ways of representing the $\langle 3,2,2;10\rangle$ APA matrix multiplication algorithm of Bini et al.\ \cite{BINI1979234} (see \cref{fig: alg_comp}).}\label{fig: APA_rep}
\end{figure*}

\indent This example also highlights the connection between the encoding matrices of an exact (or APA) matrix multiplication algorithm and the corresponding tensor decomposition of ${\mathcal M}_{\langle m_0,k_0,n_0\rangle}$. In particular, we have
\begin{equation}
    {\mathcal M}_{\langle m_0,k_0,n_0 \rangle}  = \sum_{j = 1}^t U_j \otimes V_j \otimes W_j,
\end{equation}
where $U_j \in {\mathbb C}^{m_0 \times k_0}$, $V_j \in {\mathbb C}^{k_0 \times n_0}$, and $W_j^T \in {\mathbb C}^{m_0 \times n_0}$ are matricized copies of the columns of $U$, $V$, and $W$, respectively. Importantly, this identity is invariant under \textit{De-Groote transformations} \cite{DEGROOTE19781}, in which $U_j$, $V_j$, and $W_j$ are replaced by $R^{-1}U_jS$, $S^{-1}V_jT$ and $T^{-1}W_jR$ for any set of basis transformations $R$, $S$, and $T$. \\
\indent Note that the potential practical efficiency of an $\langle m_0,k_0,n_0; t \rangle$ matrix multiplication algorithm can be gleaned from properties of its encoding matrices. For example, a \textit{real} algorithm, whose encoding matrices are all real valued, in practice, is likely faster than a \textit{complex} one (of comparable sparsity), which requires complex arithmetic even to multiply real matrices. Similarly, an algorithm with sparse encoding matrices will likely be faster in practice -- hence the reference above to ``sparsifying" an APA algorithm, which really refers to sparsifying its encoding matrices (see \cite{BeniaminiSchwartzFastDecomposition}). \\
\indent Throughout, we use ${\mathbb C}(\varepsilon)$, ${\mathbb C}[\varepsilon]$ and ${\mathbb C}[\varepsilon,\varepsilon^{-1}]$, to denote, respectively, the field of rational functions, the ring of polynomials, and the ring of Laurent polynomials in the variable $\varepsilon$ with complex coefficients. We also use Matlab notation when convenient, e.g., $A(:,i)$ and $A(i,:)$  refer to a column/row of the matrix $A$. This extends to arbitrary index sets $I$. That is, $A(:,I)$ collects the columns of $A$ with indices $i \in I$.

\begin{remark}\label{rem: degree}
   In this paper, we assume that the remainder of an APA algorithm, meaning the difference between its output and the true product, takes the form $\varepsilon R_1 + \cdots + \varepsilon^d R_d$, where $R_i$ is assumed to be not identically zero for all $1 \leq i \leq d$. This does not necessarily hold for all APA algorithms, which may have remainders that skip terms (see \cref{tab:results_summary}). This is relevant for implementation purposes since, in practice, the number of evaluations necessary to recover the exact product $AB$ from an APA algorithm is not determined by the largest exponent in its remainder but rather by the number of nonzero terms it contains. 
   The catch here is that the number of terms in the remainder for a recursive implementation of a $\langle m_0,k_0,n_0 \rangle$ APA algorithm (that performs $L$ recursive steps) is \textit{not} necessarily $L$ times the number of terms in the base-case algorithm, and in fact in most cases it grows to $dL$ as $L$ increases.  For this reason 
   we stick with $\varepsilon R_1 + \cdots + \varepsilon^dR_d$ in our analysis. 
\end{remark}



\section{A New Search Algorithm for APA Schemes}\label{section: search}
We start by presenting a new algorithm for deriving APA schemes. As mentioned above, existing methods for generating APA algorithms, e.g., those of Smirnov and Bini, take (approximate) floating-point decompositions of the matrix multiplication tensor and convert them into symbolic ones. Our approach flips this idea; we start from an exact algorithm and produce an APA one by applying a symbolic De-Groote transformation and subsequently deleting terms that are $O(\varepsilon)$. That is, if $\sum_{j=1}^r U_j \otimes V_j \otimes W_j$ is a decomposition of the matrix multiplication tensor ${\mathcal M}_{\langle m,k,n \rangle}$ for $U_j \in {\mathbb C}^{m \times k}$, $V_j \in {\mathbb C}^{k \times n}$ and $W_j \in {\mathbb C}^{n \times m}$ (recall \cref{fig: APA_rep}), then 
\begin{equation}
    {\mathcal M}_{\langle m,k,n\rangle} = \sum_{j = 1}^r R^{-1}U_jS \otimes S^{-1}V_jT\otimes T^{-1}W_jR
\end{equation}
for any invertible $R$, $S$, and $T$. If additionally $R$, $S$, and $T$ have entries in ${\mathbb C}(\varepsilon)$, we obtain a symbolic decomposition that is exact for any value of $\varepsilon$. Deleting the terms for which every nonzero entry of $R^{-1}U_jS \otimes S^{-1}V_jT \otimes T^{-1}W_jR$ is divisible by a positive power of $\varepsilon$ yields an APA algorithm (possibly of lower rank) that can be made arbitrarily accurate as $\varepsilon \rightarrow 0$. \\
\indent The difficulty of this approach lies in finding an appropriate exact algorithm to use as a starting point (as we will see, not just any one will do). One way of doing this is by using recent work on generating exact decompositions \cite{flip_graph,perminov2026}, which allows us to find exact algorithms satisfying the necessary conditions to produce lower rank APA algorithms. In this section, we provide an overview of this approach, including pseudocode and a motivating example, and discuss the results of our search. 

\subsection{Sufficient Conditions for Rank Improvements}


Suppose that an exact algorithm for matrix multiplication is represented by the tensor decomposition ${\mathcal M}_{\langle m,k,n \rangle} = \sum_{j=1}^r U_j \otimes V_j \otimes W_j$. This algorithm is a good starting point for the approach outlined above if the following hold: (1) the product $U_jV_jW_j$ is nilpotent for at least one $j$ (required) and (2) $r$ is small (desired but not required). The second criterion is one of practical concern; since we obtain APA algorithms by removing terms from an exact one, we stand to obtain the best result by starting from an exact algorithm that has minimal rank. The first criterion, on the other hand, guarantees that a term can be deleted at all. We verify this below. 

\begin{thm}\label{thm: remove_one}
Suppose that the product $UVW$ of $ U \in {\mathbb C}^{m \times k}$, $V \in {\mathbb C}^{k \times n}$ and $W \in {\mathbb C}^{n \times m}$ is nilpotent. In this case, there exist $ R \in {\mathbb C}(\varepsilon)^{m \times m}$, $ S \in {\mathbb C}(\varepsilon)^{k \times k}$, and $T \in {\mathbb C}(\varepsilon)^{n \times n}$, each invertible over ${\mathbb C}(\varepsilon)$, such that all entries of $R^{-1}US$,  $S^{-1}VT$, and $T^{-1}WR$ are polynomials in $\varepsilon$ with zero constant term. 
\end{thm}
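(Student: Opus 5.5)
The plan is to encode the three maps as a single nilpotent operator and then rescale an adapted basis by powers of $\varepsilon$. View $U: {\mathbb C}^k \to {\mathbb C}^m$, $V: {\mathbb C}^n \to {\mathbb C}^k$, and $W: {\mathbb C}^m \to {\mathbb C}^n$ as linear maps. Note that $R^{-1}US$ is the matrix of $U$ when the columns of $S$ are taken as the basis of the domain and the columns of $R$ as the basis of the codomain; the analogous statements hold for $S^{-1}VT$ and $T^{-1}WR$. So it suffices to choose bases of ${\mathbb C}^m$, ${\mathbb C}^k$, ${\mathbb C}^n$ over ${\mathbb C}(\varepsilon)$ in which all three maps have entries in $\varepsilon{\mathbb C}[\varepsilon]$.

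First, form $X = {\mathbb C}^m \oplus {\mathbb C}^k \oplus {\mathbb C}^n$ and the operator $\Phi$ on $X$ that acts by $U$ on ${\mathbb C}^k$ (landing in ${\mathbb C}^m$), by $V$ on ${\mathbb C}^n$, and by $W$ on ${\mathbb C}^m$. Then $\Phi^3$ is block diagonal with blocks $UVW$, $VWU$, and $WUV$. If $(UVW)^p = 0$, then $(VWU)^{p+1} = VW(UVW)^pU = 0$, and similarly for $WUV$. Hence $\Phi$ is nilpotent.

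Second, consider the kernel filtration $K_p = \ker \Phi^p$. Since $\Phi$ maps each summand into another summand, each $K_p$ is a direct sum of its intersections with ${\mathbb C}^m$, ${\mathbb C}^k$, and ${\mathbb C}^n$. Moreover $\Phi(K_p) \subseteq K_{p-1}$, and $K_0 = 0$. Choose a basis of each summand adapted to this filtration: extend successively a basis of $K_1 \cap {\mathbb C}^m$ to one of $K_2 \cap {\mathbb C}^m$, and so on, and likewise for ${\mathbb C}^k$ and ${\mathbb C}^n$. Give each basis vector $e_j$ its level $p_j$, the least $p$ with $e_j \in K_p$; note $p_j \geq 1$. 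Then $\Phi e_j$ is a combination of basis vectors $e_i$ with $p_i \leq p_j - 1$. These constant bases give invertible matrices $P_m$, $P_k$, $P_n$.

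Third, rescale: set $f_j = \varepsilon^{p_j} e_j$. This amounts to $R = P_m D_m$, $S = P_k D_k$, $T = P_n D_n$ with $D$ diagonal in powers of $\varepsilon$, which are clearly invertible over ${\mathbb C}(\varepsilon)$. If $\Phi e_j = \sum_i M_{ij} e_i$, then
\[
\Phi f_j = \sum_i \varepsilon^{p_j - p_i} M_{ij} f_i .
\]
Every nonzero $M_{ij}$ has $p_j - p_i \geq 1$. Each of the three matrices is a block of the matrix of $\Phi$ in the basis $\{f_j\}$, so each has entries that are monomials $c\,\varepsilon^{q}$ with $q \geq 1$. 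These are polynomials with zero constant term, as required.

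The only genuinely delicate step is the first one: recognizing that the cyclic structure makes nilpotency of $UVW$ equivalent to nilpotency of $\Phi$, and that $\Phi$ is homogeneous with respect to the decomposition of $X$. Homogeneity is what lets the adapted basis split into separate bases for the three spaces. The remaining steps are routine bookkeeping.
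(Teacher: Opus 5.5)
Your proof is correct and follows the paper's argument: your $K_p\cap\mathbb{C}^m$, $K_p\cap\mathbb{C}^n$, $K_p\cap\mathbb{C}^k$ are exactly the paper's iterated-preimage filtrations $F_p$, $G_p$, $H_p$, and rescaling an adapted basis by $\varepsilon^{\text{level}}$ is the same construction. The differences are cosmetic. You package the cycle as one nilpotent operator $\Phi$, replacing the paper's trace argument with $(VWU)^{p+1}=VW(UVW)^pU$, and you take $R=P_mD_m$ instead of the paper's conjugate $P_mD_mP_m^{-1}$, which makes the entries monomials rather than general elements of $\varepsilon\mathbb{C}[\varepsilon]$.
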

\begin{proof}
    Let $F_0=G_0=H_0={0}$ and inductively define $ F_{j+1}$, $G_{j+1}$, and $H_{j+1}$ to be the pre-images of $G_j$, $H_j$, and $F_j$, respectively, under the linear transformations represented by $W$, $V$, and $U$. For example, $ F_1 = \ker(W)$, $G_1 = \ker(V)$, and $H_1 = \ker(U)$. We note two important facts about these subspaces:
    \begin{enumerate}
        \item They are nested -- e.g., $F_j \subseteq F_{j+1}$, $G_j \subseteq G_{j+1}$, and $H_j \subseteq H_{j+1}$. This can be shown inductively by first noting that $F_0 \subseteq F_1$ (since $x \in F_0$ satisfies $Wx = W0 = 0 \in G_0$), $G_0 \subseteq G_1$, and $H_0 \subseteq H_1$. Now, $F_j \subseteq F_{j+1}$ implies $H_{j+1} \subseteq H_{j+2}$,  $G_{j} \subseteq G_{j+1}\Rightarrow F_{j+1} \subseteq F_{j+2}$, and $H_{j} \subseteq H_{j+1}\Rightarrow G_{j+1} \subseteq G_{j+2}$. 
        \item $F_{3i}$, $G_{3i}$, and $H_{3i}$ are the null spaces of $(UVW)^i$, $(WUV)^i$, and  $(VWU)^i$, respectively. 
    \end{enumerate}
    If $UVW$ is nilpotent, then so are $VWU$ and $WUV$,\footnote{This follows from the fact that nilpotence is equivalent to the trace powers of $UVW$ being zero, and trace is invariant under cyclic permutations.} and therefore $\bigcup_j F_j = {\mathbb C}^m$, $ \bigcup_j G_j = {\mathbb C}^n$, and $\bigcup_j H_j = {\mathbb C}^k$. We can now define $R$, $S$, and $T$ as follows:\ in the case of $R$, take a basis $x_1, x_2, \ldots, x_m$ for ${\mathbb C}^{m}$ constructed successively from the filtration 
    $F_1 \subseteq F_2 \subseteq \cdots $ and define $R \in {\mathbb C}(\varepsilon)^{m\times m}$ to be the matrix satisfying $Rx = \varepsilon^ix$ if $x \in F_i \setminus F_{i-1}$. Define $S$ and $T$ analogously using $H_j$ and $G_j$. It remains to show that $R^{-1}US$, $S^{-1}VT$, and $T^{-1}WR$ have entries in $\varepsilon {\mathbb C}[\varepsilon]$. To do this, suppose that $y \in {\mathbb C}^k$ is one of the basis vectors obtained from $H_1 \subseteq H_2 \subseteq \cdots$, where $y \in H_i\setminus H_{i-1}$ for some $i$. By construction, we have $R^{-1}USy = \varepsilon^iR^{-1}Uy $ and moreover $Uy \in F_{i-1}$, so $Uy = \alpha_1x_1 + \cdots + \alpha_lx_l$ for the basis vectors $x_1, \ldots, x_l$ of ${\mathbb C}^m$ coming from $F_{i-1}$. But in this case, 
    \begin{equation}\label{eqn: prove_polynomial_entries}
        R^{-1}USy = \varepsilon^{i} R^{-1} \left( \alpha_1x_1 + \cdots + \alpha_lx_l \right)  = \varepsilon^i \left( \alpha_1 R^{-1}x_1 + \cdots + \alpha_l R^{-1}x_l \right) = \alpha_1 \varepsilon^{i - \beta_1} x_1 + \cdots + \alpha_l \varepsilon^{i - \beta_l}x_l,
    \end{equation}
    where $\beta_1, \ldots, \beta_l$ are the exponents satisfying $Rx_j = \varepsilon^{\beta_j} x_j$. Since each $x_j$ belongs to $F_{i-1}$, we have $\beta_j \leq i-1$ for all $j$. Hence, each exponent $i - \beta_j$ in \eqref{eqn: prove_polynomial_entries} is at least one, which means $R^{-1}USy$ has entries in $\varepsilon {\mathbb C}[\varepsilon]$. Since this applies to any basis vector $y \in {\mathbb C}^k$, and letting $Y \in {\mathbb C}^{k \times k}$ be the matrix containing that basis, we conclude that $R^{-1}USY$ has entries in $\varepsilon {\mathbb C}[\varepsilon]$, which is sufficient to show the same for the entries of $R^{-1}US$, since multiplying on the right by $Y^{-1}$, which has constant entries, won't introduce any constant terms. The same argument can be used to show that $S^{-1}VT$ and $T^{-1}WR$ also have entries in $\varepsilon {\mathbb C}[\varepsilon]$.
\end{proof}


The key idea behind our search algorithm is now captured by the following result. The terms corresponding to $j \leq l$, which can be transformed to have entries that are polynomials in $\varepsilon$ with no constant term -- a property that carries over to the corresponding tensor product $R^{-1}U_jS \otimes S^{-1}V_j T \otimes T^{-1}W_jR$ -- are precisely those we referred to as $O(\varepsilon)$ above. 

\begin{thm}\label{thm: rank_improvement}
Let ${\mathcal M}_{\langle m,k,n \rangle} = \sum_{j=1}^{r} U_j \otimes V_j \otimes W_j$ be a tensor decomposition obtained from an exact $\langle m,k,n \rangle$ matrix multiplication algorithm. If there exist $ R \in {\mathbb C}(\varepsilon)^{m \times m}$, $ S \in {\mathbb C}(\varepsilon)^{k \times k}$, and $T \in {\mathbb C}(\varepsilon)^{n \times n}$, each invertible over ${\mathbb C}(\varepsilon)$, such that $R^{-1}U_jS$, $S^{-1}V_jT$, and $T^{-1}W_jR$ have entries in $\varepsilon {\mathbb C}[\varepsilon]$ for all $1 \leq j \leq l$, then \[\sum_{j=l+1}^r R^{-1}U_jS \otimes S^{-1}V_jT \otimes T^{-1}W_jR \] 
is an approximate, symbolic decomposition of ${\mathcal M}_{\langle m,k,n \rangle}$ that corresponds to an APA algorithm of rank $r-l$. In particular, the border rank of ${\mathcal M}_{\langle m,k,n \rangle}$ is bounded by $r - l$. 
\end{thm}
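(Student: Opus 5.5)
The plan is to start from De-Groote invariance and treat the deleted terms as an $O(\varepsilon)$ error. Since $R$, $S$, $T$ are invertible over ${\mathbb C}(\varepsilon)$, the identity
\[
{\mathcal M}_{\langle m,k,n\rangle} = \sum_{j=1}^r R^{-1}U_jS \otimes S^{-1}V_jT \otimes T^{-1}W_jR
\]
holds as an identity of tensors with entries in ${\mathbb C}(\varepsilon)$. It therefore also holds for every value of $\varepsilon$ at which $R$, $S$, $T$, and their inverses are defined. These are all but finitely many values, namely away from the poles of the entries and the zeros of the determinants.

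Next I subtract the first $l$ terms. For $j \le l$, each factor has entries in $\varepsilon{\mathbb C}[\varepsilon]$, so every entry of the Kronecker product $R^{-1}U_jS \otimes S^{-1}V_jT \otimes T^{-1}W_jR$ is a product of three such polynomials. Each entry thus lies in $\varepsilon^3{\mathbb C}[\varepsilon]$, and in particular in $\varepsilon{\mathbb C}[\varepsilon]$. Hence
\[
\sum_{j=l+1}^r R^{-1}U_jS \otimes S^{-1}V_jT \otimes T^{-1}W_jR = {\mathcal M}_{\langle m,k,n\rangle} - E(\varepsilon),
\]
where $E(\varepsilon)=\sum_{j\le l}(\cdots)$ is a tensor whose entries are polynomials with zero constant term. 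Writing $E(\varepsilon) = \varepsilon E_1 + \cdots + \varepsilon^{d}E_d$, the remaining $r-l$ rank-one terms give a decomposition of the form \eqref{eqn: apa_output}. Reading off encoding matrices $U(\varepsilon)$, $V(\varepsilon)$, $W(\varepsilon)$ from the columns of these terms via \eqref{eqn: encoding_relation} gives a bilinear algorithm that uses $r-l$ multiplications and outputs $AB$ plus residuals $\varepsilon R_1+\cdots$. This is an APA algorithm of rank $r-l$.

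The main obstacle is that the surviving factors have entries in ${\mathbb C}(\varepsilon)$ rather than ${\mathbb C}[\varepsilon,\varepsilon^{-1}]$, so they could have poles at nonzero points. There is also a worry that the limit $\varepsilon\to 0$ might not be approached along admissible values. To handle this, I note that $E(\varepsilon)$ is polynomial with no constant term. The sum of the surviving terms therefore equals a polynomial tensor tending to ${\mathcal M}_{\langle m,k,n\rangle}$ as $\varepsilon\to0$, on a punctured neighborhood of $0$ that avoids the finitely many bad points. The border rank is the minimal number of rank-one terms in a sequence of decompositions converging to ${\mathcal M}_{\langle m,k,n\rangle}$. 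Taking any sequence $\varepsilon_i\to0$ of admissible values thus gives border rank at most $r-l$.

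If a Laurent-polynomial (functional) form is desired, I can additionally expand each surviving factor as a Laurent series at $0$ and truncate at a high enough order. Equivalently, I can clear denominators by rescaling the factors by powers of $\varepsilon$ and absorbing constants. The rational form already suffices for the stated claim, so I would only remark on this refinement.
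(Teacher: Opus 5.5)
Your proof is correct and follows essentially the same route as the paper. The paper justifies the theorem only implicitly, via De-Groote invariance over ${\mathbb C}(\varepsilon)$ and deletion of the terms whose factors have entries in $\varepsilon{\mathbb C}[\varepsilon]$, and your handling of admissible values of $\varepsilon$ near $0$ is more careful than its remark that the symbolic decomposition is ``exact for any value of $\varepsilon$.'' One minor caveat on your optional refinement: truncating the Laurent expansions at high order does give Laurent-polynomial factors with an $O(\varepsilon)$ remainder, but rescaling factors by powers of $\varepsilon$ cannot clear non-monomial denominators such as $(1-\varepsilon)^{-1}$, so the two procedures are not equivalent.
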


\cref{thm: rank_improvement} implies that we can remove more than one nilpotent term from an exact decomposition provided they can be made $O(\varepsilon)$ for the same (symbolic) basis transformations. Despite this, \cref{thm: remove_one} only provides a criterion for removing one term at a time, which is ultimately limited by the fact that, even if we have two terms that correspond to nilpotent products, a set of transformations that makes one $O(\varepsilon)$ may not necessarily make the other $O(\varepsilon)$ as well. Hence, we need other tools for identifying when more than one term is removable 
The following proposition gives an easy necessary condition. 

\begin{prop}
    If $ R^{-1} U_j S$, $S^{-1}V_j T$, and $T^{-1} W_jR $ have entries in $ \varepsilon \mathbb{C}[\varepsilon] $ for all $ 1 \leq j \leq l$ and $R$, $S$, and $T$  as in \cref{thm: rank_improvement}, then  
    \[ \text{\normalfont Trace}(U_{j^u_1}V_{j^v_1}W_{j^w_1} \cdots U_{j^u_p}V_{j^v_p} W_{j^w_p} ) = 0
    \]
    for all sequences of indices $j^u_1,j_1^v,j_1^w,\ldots,j^u_p,j_p^v, j_p^w \leq l$ with $p \geq 1$.
\end{prop}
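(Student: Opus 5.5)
The plan is to push the basis changes through the product by cyclic telescoping, and then pass from a matrix whose entries lie in $\varepsilon\mathbb{C}[\varepsilon]$ back to the original constant matrices by specializing $\varepsilon$.

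For indices $j^u_i, j^v_i, j^w_i \le l$, write
\[
\tilde U_{j} = R^{-1}U_jS,\qquad \tilde V_{j} = S^{-1}V_jT,\qquad \tilde W_{j} = T^{-1}W_jR .
\]
By hypothesis all entries of these matrices lie in $\varepsilon\mathbb{C}[\varepsilon]$. In the product
\[
\tilde U_{j^u_1}\tilde V_{j^v_1}\tilde W_{j^w_1}\cdots \tilde U_{j^u_p}\tilde V_{j^v_p}\tilde W_{j^w_p},
\]
the inner factors $SS^{-1}$, $TT^{-1}$ and $RR^{-1}$ cancel. What remains is
\[
R^{-1}\,\bigl(U_{j^u_1}V_{j^v_1}W_{j^w_1}\cdots U_{j^u_p}V_{j^v_p}W_{j^w_p}\bigr)\,R .
\]
This cancellation uses only that the factors appear in the cyclic order $U,V,W,U,V,W,\dots$, which the statement enforces even though the indices attached to the three letters are independent. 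Call the constant matrix in parentheses $P\in\mathbb{C}^{m\times m}$. Then $\tilde P := R^{-1}PR$ is similar to $P$ over the field $\mathbb{C}(\varepsilon)$, so its trace and all its trace powers coincide with those of $P$: $\operatorname{Trace}(\tilde P^q)=\operatorname{Trace}(P^q)$ for every $q\ge 1$.

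On the other hand, $\tilde P$ is a product of $3p$ matrices with entries in $\varepsilon\mathbb{C}[\varepsilon]$. Hence $\tilde P$ has entries in $\varepsilon^{3p}\mathbb{C}[\varepsilon]$, and in particular $\operatorname{Trace}(\tilde P)\in\varepsilon\mathbb{C}[\varepsilon]$. But $\operatorname{Trace}(\tilde P)=\operatorname{Trace}(P)$ is a constant, independent of $\varepsilon$. The only constant lying in $\varepsilon\mathbb{C}[\varepsilon]$ is $0$, since such a polynomial vanishes at $\varepsilon=0$. Therefore $\operatorname{Trace}(P)=0$. (Applying the same argument to $\tilde P^q$ gives $\operatorname{Trace}(P^q)=0$ for all $q$, so $P$ is in fact nilpotent, consistent with \cref{thm: remove_one}. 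This is not needed for the statement, since it is already covered by taking longer sequences.)

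The only step requiring care is the justification of $\operatorname{Trace}(R^{-1}PR)=\operatorname{Trace}(P)$ when $R$ has rational-function entries. Cyclic invariance of the trace holds over any commutative ring, and here we are working in $\mathbb{C}(\varepsilon)$. The resulting identity is an equality of elements of $\mathbb{C}(\varepsilon)$: the left side is a polynomial with zero constant term, and the right side is a constant. Equating them is therefore legitimate without evaluating at $\varepsilon=0$, so possible poles of $R$ or $R^{-1}$ at $\varepsilon=0$ cause no difficulty. This is the main point I would state explicitly; the rest is bookkeeping on the degree of the product.
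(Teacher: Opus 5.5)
Your proof is correct and follows the paper's own argument. The basis changes telescope, so the transformed product is $R^{-1}PR$; its trace is the constant $\operatorname{Trace}(P)$ yet lies in $\varepsilon\mathbb{C}[\varepsilon]$, which forces it to vanish. You spell out the telescoping and the fact that the identity holds in $\mathbb{C}(\varepsilon)$, both of which the paper leaves implicit.
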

\begin{proof}
    Since trace is basis invariant, all expressions of the form $ \text{Trace}(U_{j^u_1} V_{j^v_1} W_{j^w_1} \dots U_{j^u_p} V_{j^v_p} W_{j^w_p}) $ are invariant under De-Groote transformations. By assumption, we have a De-Groote transformation under which the given expression is a polynomial in $\varepsilon$ with zero constant term. But the original expression does not depend on $\varepsilon$, so it follows that it must be zero.
\end{proof}

Importantly, this necessary condition is also a  sufficient one. Moreover, we do not need to check arbitrary products of the $U_jV_jW_j$ (which of course is not possible in practice). This follows from classical results in invariant theory, namely Hilbert's basis theorem and Hilbert-Mumford, though an effective bound for $p$ is given by Le Bruyn and Procesi \cite[Theorem 1]{LeBruynProcesi1990}. In the literature, this theorem is usually stated in the language of quivers, which we reformulate in our context.\footnote{The relevant quiver for a rank $ r $ exact decomposition of ${\mathcal M}_{\langle m,k,n \rangle}$ is a three-vertex directed cycle with $ r $ copies of each arrow. We construct a representation of this quiver by assigning $ \mathbb{C}^m, \mathbb{C}^k, \mathbb{C}^n $ to the vertices and $ U_j, V_j, W_j$ to the arrows.}

\begin{thm}\label{thm: remove_more_than_one}
    Suppose that $ \text{\normalfont Trace}(U_{j^u_1} V_{j^v_1} W_{j^w_1} \cdots U_{j^u_p} V_{j^v_p} W_{j^w_p}) = 0 $ for all sequences of indices $ j^u_1, j^v_1,j^w_1, \ldots$
    $ \ldots, j^u_p, j^v_p,j^w_p \leq l $ with $ p \leq (m + k + n)^2 $. Then there exist $ R \in {\mathbb C}(\varepsilon)^{m \times m}$, $S \in {\mathbb C}(\varepsilon)^{k \times k}$, and $T \in {\mathbb C}(\varepsilon)^{n \times n}$, each invertible over ${\mathbb C}(\varepsilon)$, such that $ R^{-1}U_jS$, $S^{-1}V_jT$, and $T^{-1}W_jR$ have entries in $ \varepsilon \mathbb{C}[\varepsilon] $ for all $ 1 \leq j \leq l$.
\end{thm}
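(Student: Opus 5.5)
The plan is to recast the statement as a nullcone problem in the invariant theory of quiver representations. Then Le Bruyn--Procesi turns the trace conditions into ``all invariants vanish'', and Hilbert--Mumford produces a one-parameter subgroup that plays the role of $(R,S,T)$.

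\textbf{Step 1 (setup).} Let $Q$ be the three-vertex directed cycle with $l$ (not $r$) copies of each arrow. The tuple $x=(U_j,V_j,W_j)_{j\le l}$ is then a representation of $Q$ with dimension vector $(m,k,n)$. The reductive group $G=GL_m\times GL_k\times GL_n$ acts by $(R,S,T)\cdot x=(R^{-1}U_jS,\;S^{-1}V_jT,\;T^{-1}W_jR)_{j\le l}$. This is just the De-Groote action restricted to the first $l$ terms, written with inverses to match the statement; each arrow sees one group factor on each side, so it is a genuine group action up to the harmless convention of replacing $g$ by $g^{-1}$.

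\textbf{Step 2 (invariants vanish).} By \cite[Theorem 1]{LeBruynProcesi1990}, the ring ${\mathbb C}[\mathrm{Rep}(Q,(m,k,n))]^G$ is generated by traces of oriented cycles of length at most $N^2$, where $N=m+k+n$. The main bookkeeping step is to check that in this quiver every oriented cycle has length $3p$ and is exactly a word $U_{j^u_1}V_{j^v_1}W_{j^w_1}\cdots U_{j^u_p}V_{j^v_p}W_{j^w_p}$ up to cyclic rotation, which does not change the trace. Hence the hypothesis, which covers $p\le N^2$ and so in particular all cycles of length $3p\le N^2$, makes every generator of positive degree vanish at $x$. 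Therefore every homogeneous invariant of positive degree vanishes at $x$.

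\textbf{Step 3 (nullcone and Hilbert--Mumford).} By Hilbert's classical characterization of the nullcone for a reductive group, $0\in\overline{G\cdot x}$. The Hilbert--Mumford criterion then gives a one-parameter subgroup $\lambda:{\mathbb C}^*\to G$ with $\lim_{\varepsilon\to0}\lambda(\varepsilon)\cdot x=0$. After conjugating, we may write $\lambda(\varepsilon)=(P_1D_1(\varepsilon)P_1^{-1},\,P_2D_2(\varepsilon)P_2^{-1},\,P_3D_3(\varepsilon)P_3^{-1})$ with $D_i$ diagonal with entries $\varepsilon^{a}$ for $a\in{\mathbb Z}$. We set $(R,S,T)=\lambda(\varepsilon)$, adjusting for the inverse convention. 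These matrices have entries in ${\mathbb C}[\varepsilon,\varepsilon^{-1}]\subset{\mathbb C}(\varepsilon)$ and are invertible over ${\mathbb C}(\varepsilon)$.

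\textbf{Step 4 (entries lie in $\varepsilon{\mathbb C}[\varepsilon]$).} In the bases given by the columns of the $P_i$, each entry of $\lambda(\varepsilon)\cdot x$ is a constant times $\varepsilon^{b}$ for an integer $b$ (a weight difference). Existence of the limit together with the limit being $0$ forces $b\ge1$ for every nonzero coefficient. Changing back to the standard basis multiplies by the constant matrices $P_i^{\pm1}$. This produces linear combinations of these monomials, so all entries of $R^{-1}U_jS$, $S^{-1}V_jT$, $T^{-1}W_jR$ lie in $\varepsilon{\mathbb C}[\varepsilon]$ for $j\le l$. This is the same argument as at the end of the proof of \cref{thm: remove_one}, which is in fact the special case $l=1$ with an explicit $\lambda$.

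\textbf{Main obstacle.} I expect the main difficulty to be Step 2: correctly matching the Le Bruyn--Procesi degree bound, which is stated for cycle length in terms of the total dimension, to the parameter $p$ in the hypothesis. One also has to confirm that the ``generated by traces'' statement applies over ${\mathbb C}$ to this multi-arrow cyclic quiver with the stated group. Everything after that is standard geometric invariant theory.
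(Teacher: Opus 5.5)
Your proposal is correct and follows the route the paper indicates. The paper writes out no proof of this theorem: it only cites Le Bruyn--Procesi's generation of quiver invariants by traces of short oriented cycles, together with Hilbert's nullcone criterion and Hilbert--Mumford, phrased in quiver language. Two of your choices make that sketch precise: taking $l$ rather than $r$ copies of each arrow, and noting that the hypothesis $p\le N^2$ covers every cycle of length $3p\le N^2$.
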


Unfortunately, unlike \cref{thm: remove_one}, the proof of this result is not constructive, meaning we cannot use it to find $R$, $S$, and $T$ algorithmically. As a result, it is typically easier to remove one term at a time. Possible generalizations capable of eliminating multiple terms simultaneously are discussed in \cref{subsection: generalization}.

\subsubsection{$\langle 2,2,3 \rangle$ Example}
To see our high-level approach in action, consider an exact $\langle2,2,3;11 \rangle$ algorithm represented by the tensor decomposition 
\begin{equation}\label{eqn: 223_example}
	\begin{aligned}
		{\mathcal M}_{\langle 2,2,3 \rangle} = 
		& \begin{pmatrix}
			1&1\\
			-1&-1
		\end{pmatrix} \otimes \begin{pmatrix}
			0&0&0\\
			0&1&0
		\end{pmatrix} \otimes 
		\begin{pmatrix}
			0&0\\
			1&0\\
			0&0
		\end{pmatrix} +
		\begin{pmatrix}
			0&0\\
			1&0
		\end{pmatrix} \otimes 
		\begin{pmatrix}
			1&0&-1\\
			0&0&-1
		\end{pmatrix} \otimes 
		\begin{pmatrix}
			1&1\\
			0&0\\
			0&0
		\end{pmatrix} \\
		& +
		\begin{pmatrix}
			1&-1\\
			0&0
		\end{pmatrix} \otimes 
		\begin{pmatrix}
			0&0&0\\
			0&0&1
		\end{pmatrix} \otimes 
		\begin{pmatrix}
			0&0\\
			0&0\\
			-1&-1
		\end{pmatrix} + 
		\begin{pmatrix}
			1&0\\
			-1&-1
		\end{pmatrix} \otimes
		\begin{pmatrix}
			0&1&-1\\
			0&-1&-1
		\end{pmatrix} \otimes 
		\begin{pmatrix}
			0&1\\
			1&1\\
			0&1
		\end{pmatrix} \\
		& + 
		\begin{pmatrix}
			1&-1\\
			-1&0
		\end{pmatrix} \otimes 
		\begin{pmatrix}
			0&0&0\\
			1&0&0
		\end{pmatrix} \otimes
		\begin{pmatrix}
			-1&0\\
			0&0\\
			0&1
		\end{pmatrix} + 
		{\color{red}
        \begin{pmatrix}
			0&0\\
			0&1
		\end{pmatrix} \otimes 
		\begin{pmatrix}
			0&1&-1\\
			1&-1&-1
		\end{pmatrix} \otimes 
		\begin{pmatrix}
			0&1\\
			0&0\\
			0&1
		\end{pmatrix}} \\
		& +
	       \begin{pmatrix}
			0&0\\
			1&1
		\end{pmatrix}\otimes 
		\begin{pmatrix}
			0&1&-1\\
			0&0&-1
		\end{pmatrix} \otimes 
		\begin{pmatrix}
			0&0\\
			1&1\\
			0&0
		\end{pmatrix} +
		\begin{pmatrix}
			1&0\\
			-1&0
		\end{pmatrix}\otimes 
		\begin{pmatrix}
			0&1&0\\
			0&-1&0
		\end{pmatrix} \otimes 
		\begin{pmatrix}
			1&-1\\
			0&-1\\
			0&-1
		\end{pmatrix} \\
		&+
		\begin{pmatrix}
			1&-1\\
			-1&1
		\end{pmatrix} \otimes 
		\begin{pmatrix}
			0&0&0\\
			1&0&-1
		\end{pmatrix} \otimes 
		\begin{pmatrix}
			0&0\\
			0&0\\
			0&-1
		\end{pmatrix} +
		\begin{pmatrix}
			1&0\\
			-1&0
		\end{pmatrix} \otimes 
		\begin{pmatrix}
			1&-1&-1\\
			1&1&-1
		\end{pmatrix} \otimes 
		\begin{pmatrix}
			1&0\\
			0&0\\
			0&0
		\end{pmatrix} \\
		& +
		\begin{pmatrix}
			1&0\\
			0&0
		\end{pmatrix} \otimes 
		\begin{pmatrix}
			0&0&1\\
			0&0&1
		\end{pmatrix} \otimes 
		\begin{pmatrix}
			1&1\\
			1&1\\
			1&1
		\end{pmatrix}.
	\end{aligned}
\end{equation}
Notice that the sixth term (highlighted in red) has a nilpotent product:
\begin{equation}
        \aligned
        U_6V_6W_6 &= \begin{pmatrix} 0 & 0 \\ 0 & 1 \end{pmatrix} \begin{pmatrix} 0 & 1 & -1 \\ 1 & -1 & -1 \end{pmatrix} \begin{pmatrix} 0 & 1 \\ 0 & 0 \\ 0 & 1 \end{pmatrix} = \begin{pmatrix} 0 & 0 \\ 0 & 0 \end{pmatrix} .
        \endaligned
\end{equation}
Hence, we can follow the proof of \cref{thm: remove_one} to construct symbolic basis transformations
\begin{equation}
	R = \begin{pmatrix}
		\varepsilon&0\\
		0&\varepsilon^3
	\end{pmatrix},
	\qquad
	S =
	\begin{pmatrix}
		\varepsilon&0\\
		0& \varepsilon^4
	\end{pmatrix}, \quad \text{and} \quad
	T= \begin{pmatrix}
		2&1&0\\
		1&1&0\\
		1&0&1
	\end{pmatrix}
    \begin{pmatrix}
		\varepsilon&0&0\\
		0&\varepsilon^2&0\\
		0&0&\varepsilon^5
	\end{pmatrix}\begin{pmatrix}
		2&1&0\\
		1&1&0\\
		1&0&1
	\end{pmatrix}^{-1}, 
\end{equation}
which, when applied to each term in \eqref{eqn: 223_example}, yield
\begin{equation}
    \begin{aligned}
    {\mathcal M}_{\langle 2,2,3\rangle} =
		& \begin{pmatrix}
			1& \varepsilon^3\\
			-\varepsilon^{-2}&-\varepsilon
		\end{pmatrix} \otimes 
		\begin{pmatrix}
			0&0&0\\
			-\varepsilon^{-2} + \varepsilon^{-3}& 2\varepsilon^{-2}-\varepsilon^{-3} &0
		\end{pmatrix}\otimes 
		\begin{pmatrix}
			2\varepsilon^{-1} - 2 &0\\
			2 \varepsilon^{-1} - 1 &0\\
			\varepsilon^{-4}-1&0
		\end{pmatrix} \\
		& +
		\begin{pmatrix}
			0&0\\
			\varepsilon^{-2}&0
		\end{pmatrix} \otimes 
		\begin{pmatrix}
			\varepsilon^4 - \varepsilon+1&-\varepsilon^4 + 2\varepsilon - 1& -\varepsilon^4\\
			\varepsilon - \varepsilon^{-3} &-\varepsilon+\varepsilon^{-3}&-\varepsilon
		\end{pmatrix} \otimes 
		\begin{pmatrix}
			2-\varepsilon^{-1}&2\varepsilon^2-\varepsilon\\
			1-\varepsilon^{-1}&\varepsilon^{2}-\varepsilon\\
			1-\varepsilon^{-4}&\varepsilon^2-\varepsilon^{-2}
		\end{pmatrix} \\
		& + 
		\begin{pmatrix}
			1&-\varepsilon^3\\
			0&0
		\end{pmatrix} \otimes 
		\begin{pmatrix}
			0&0&0\\
			-\varepsilon + \varepsilon^{-3}&\varepsilon - \varepsilon^{-3}&\varepsilon
		\end{pmatrix} \otimes 
		\begin{pmatrix}
        0&0\\
			0&0\\
			-\varepsilon^{-4}&-\varepsilon^{-2}
		\end{pmatrix} \\
        &+
		\begin{pmatrix}
			1&0\\
			-\varepsilon^{-2}&-\varepsilon
		\end{pmatrix} \otimes 
		\begin{pmatrix}
			\varepsilon^4 - \varepsilon &-\varepsilon^4 + 2\varepsilon& -\varepsilon^4\\
			\varepsilon + \varepsilon^{-2} - 2\varepsilon^{-3}& - \varepsilon -2\varepsilon^{-2} + 2\varepsilon^{-3}&-\varepsilon
		\end{pmatrix}\otimes
		\begin{pmatrix}
			2\varepsilon^{-1} -2 &\varepsilon\\
			2\varepsilon^{-1} -1& \varepsilon\\
			\varepsilon^{-4} - 1&\varepsilon^{-2}
		\end{pmatrix} \\
		& +
		\begin{pmatrix}
			1&-\varepsilon^3\\
			-\varepsilon^{-2}&0
		\end{pmatrix} \otimes 
		\begin{pmatrix}
			0&0&0\\
			-\varepsilon^{-2} + 2\varepsilon^{-3}&2\varepsilon^{-2} - 2\varepsilon^{-3}&0
		\end{pmatrix} \otimes 
		\begin{pmatrix}
        \varepsilon^{-1} -2&0\\
			\varepsilon^{-1} - 1&0\\
			\varepsilon^{-4} -1&\varepsilon^{-2}
		\end{pmatrix} \\
        &+ 
		{\color{red}
        \begin{pmatrix}
			0&0\\
			0&\varepsilon
		\end{pmatrix} \otimes 
		\begin{pmatrix}
			\varepsilon^4 - \varepsilon&-\varepsilon^4 + 2\varepsilon&-\varepsilon^4\\
			\varepsilon&-\varepsilon&-\varepsilon
		\end{pmatrix} \otimes 
		\begin{pmatrix}
			0&2\varepsilon^2 - \varepsilon\\
			0&\varepsilon^2-\varepsilon\\
			0&\varepsilon^2
		\end{pmatrix}} \\
        & + 
		\begin{pmatrix}
			0&0\\
			\varepsilon^{-2}&\varepsilon
		\end{pmatrix} \otimes 
		\begin{pmatrix}
			\varepsilon^4 - \varepsilon&-\varepsilon^4 + 2\varepsilon&-\varepsilon^4\\
			\varepsilon-\varepsilon^{-3}&-\varepsilon+\varepsilon^{-3}&-\varepsilon		\end{pmatrix} \otimes 
		\begin{pmatrix}
			2\varepsilon^{-1}-2& -2\varepsilon^2 + 2\varepsilon\\
			2\varepsilon^{-1}-1&-\varepsilon^2 + 2\varepsilon\\
			\varepsilon^{-4}-1&\varepsilon^{-2}-\varepsilon^2
		\end{pmatrix} \\
        &+
		\begin{pmatrix}
			1&0\\
			-\varepsilon^{-2}&0
		\end{pmatrix} \otimes
		\begin{pmatrix}
			1-\varepsilon&2\varepsilon-1&0\\
			\varepsilon^{-2}-\varepsilon^{-3}&-2\varepsilon^{-2} + \varepsilon^{-3}&0
		\end{pmatrix} \otimes 
		\begin{pmatrix}
			2 - \varepsilon^{-1}&-\varepsilon\\
			1-\varepsilon^{-1}&-\varepsilon\\
			1-\varepsilon^{-4}&-\varepsilon^{-2}
		\end{pmatrix} \\
		& +
		\begin{pmatrix}
			1&-\varepsilon^3\\
			-\varepsilon^{-2}&\varepsilon
		\end{pmatrix} \otimes
		\begin{pmatrix}
			0&0&0\\
			\varepsilon-\varepsilon^{-2} + \varepsilon^{-3}&-\varepsilon + 2\varepsilon^{-2} - \varepsilon^{-3}&-\varepsilon
		\end{pmatrix} \otimes
		\begin{pmatrix}
			0&0\\
			0&0\\
			0&-\varepsilon^{-2}
		\end{pmatrix} \\
        &+
		\begin{pmatrix}
			1&0\\
			-\varepsilon^{-2}&0
		\end{pmatrix} \otimes
		\begin{pmatrix}
			\varepsilon^4&-\varepsilon^4&-\varepsilon^4\\
			\varepsilon-2\varepsilon^{-2} + 2\varepsilon^{-3}&-\varepsilon+4\varepsilon^{-2} - 2\varepsilon^{-3}&-\varepsilon
		\end{pmatrix} \otimes
		\begin{pmatrix}
			2-\varepsilon^{-1}&0\\
			1-\varepsilon^{-1}&0\\
			1-\varepsilon^{-4}&0
		\end{pmatrix} \\
        & + 
		\begin{pmatrix}
			1&0\\
			0&0
		\end{pmatrix} \otimes
		\begin{pmatrix}
			1-\varepsilon^4&\varepsilon^4-1&\varepsilon^4\\
			-\varepsilon+\varepsilon^{-3}&\varepsilon-\varepsilon^{-3}&\varepsilon
		\end{pmatrix} \otimes
		\begin{pmatrix}
			\varepsilon^{-1}&\varepsilon\\
			\varepsilon^{-1}&\varepsilon\\
			\varepsilon^{-4}&\varepsilon^{-2}
		\end{pmatrix}.
	\end{aligned}
\end{equation}
As expected, the sixth term is now $O(\varepsilon^3)$ and can be deleted to obtain a $\langle 2,2,3; 10 \rangle$ APA algorithm. \\
\indent Note that the choice of $R$, $S$, and $T$ is not unique. Moreover, it is possible to construct suitable basis transformations without relying on the proof of \cref{thm: remove_one}. As an example, we could take instead 
\begin{equation}
	R = \begin{pmatrix}
		1&0\\
		0&\varepsilon^{-1}
	\end{pmatrix},
	\qquad
	S =
	\begin{pmatrix}
		\varepsilon^{-3}&0\\
		0&1
	\end{pmatrix},
	\qquad
	T=
	\begin{pmatrix}
		1&2&0\\
		0&1&1\\
		1&1&0
	\end{pmatrix}
    \begin{pmatrix}
		\varepsilon^{-2}&0&0\\
		0&1&0\\
		0&0&1
	\end{pmatrix}.
\end{equation}
These basis transformations, which were constructed by hand, transform the sixth term of \eqref{eqn: 223_example} to 
\begin{equation}
    \begin{pmatrix}
			0&0\\
			0&\varepsilon
		\end{pmatrix} \otimes 
		\begin{pmatrix}
			-\varepsilon&0&\varepsilon^3\\
			0&0&-1
		\end{pmatrix} \otimes 
		\begin{pmatrix}
			0&\varepsilon\\
			0&0\\
			0&0
		\end{pmatrix}.
\end{equation}
While in this case the entries of $R^{-1}U_6S$, $S^{-1}V_6T$, and $T^{-1}W_6R$ do \textit{not} all belong to $\varepsilon{\mathbb C}[\varepsilon]$, their tensor product is still $O(\varepsilon)$. In this way, \cref{thm: remove_one} and \cref{thm: remove_more_than_one} guarantee more than is necessary to remove one (or more) terms from an exact algorithm. We leave determining what constitutes an ``optimal" choice for these basis transformations to future work.

\subsection{Implementation Details and Results}\label{sec: Details and Results}

We can use the results from the previous section to determine when an APA algorithm can be derived from an exact one. Nevertheless, the conditions suggested by \cref{thm: remove_one} and \cref{thm: remove_more_than_one} are more rare than we might expect. Indeed, an initial search of the repository of Sedoglavic\footnote{\href{https://fmm.univ-lille.fr/}{https://fmm.univ-lille.fr/}} turned up no exact algorithms whose corresponding tensor decompositions contain a nilpotent term. Luckily we can generate more candidates by running a \textit{flip graph} search, which we describe below.

\begin{remark}
    There are many other search algorithms for exact matrix multiplication that could be used here in place of the flip graph -- e.g., algorithms that apply some version of least squares to the tensor decomposition problem \cite{Smirnov2013,BALLARD2019,Kaporin2024_2} or use reinforcement/machine learning \cite{alphaevolve,nature_MM}. We choose the flip-graph approach since it is particularly easy to use and requires minimal computing resources. 
\end{remark}

\subsubsection{Flip Graph Search for Exact Decompositions}

The flip graph is a recent tool for discovering (exact) matrix multiplication algorithms introduced by Kauers and Moosbauer \cite{flip_graph}. For given $m$, $k$, and $n$, the vertices of the $\langle m,k,n \rangle$ flip graph are $\langle m,k,n \rangle$ matrix multiplication algorithms, represented by their corresponding tensor decompositions. Edges in the graph connect schemes that can be obtained from one another via the \textit{flip} operation. In the simplest case, a flip replaces two rank-one factors $A \otimes B \otimes C$ and $A \otimes B' \otimes C'$ in a matrix-multiplication decomposition with $A \otimes (B+B') \otimes C $ and $A \otimes B' \otimes (C'-C)$. Since this leaves the sum of the terms unchanged, each flip results in a new decomposition and corresponding algorithm (of the same rank). Note that this only works because the terms $A \otimes B \otimes C$ and $A \otimes B' \otimes C'$ share the common factor $A$; to perform a flip in general, we need at least one factor to be shared between the rank-one terms, though it need not be the first. \\
\indent The key insight for Kauers and Moosbauer is that, under the right circumstances, an algorithm found in the flip graph can be \textit{reduced}, meaning two rank-one terms can be replaced by a single one, thereby producing a new algorithm of lower rank. Accordingly, they propose a search algorithm that runs a ``flip-graph walk," in which successive, random flips are applied to an initial decomposition, at each step checking for the conditions that make a reduction possible. In \cite{flip_graph}, they use this algorithm to improve the best-known ranks for several sizes $\langle m,k,n \rangle$. Subsequent work accelerated the original flip graph algorithm by leveraging symmetry and/or traversing multiple sizes in one search \cite{flip_graph_symmetry,Meta_Flip,Adaptive_Flip}. \\
\indent In this work, we are interested in using a flip graph walk to generate many exact algorithms rather than improve the rank of exact matrix multiplication. In this way, the flip graph gives a simple search algorithm for our application:\ run a flip-graph walk, checking not for possible reductions but instead for decompositions that satisfy the conditions of \cref{thm: remove_one} and/or \cref{thm: remove_more_than_one}. With this, we can now state our full algorithm for generating new APA schemes, which is shown here (in high-level pseudocode) as \cref{alg: APA_search}. Like \cref{thm: rank_improvement}, this pseudocode assumes that the terms that can be removed from an exact decomposition are leading. This is done to simplify the notation (and since we can always reorder the terms in a decomposition however we like). \\
\indent For better efficiency we use an implementation of Perminov \cite{perminov2026} to execute the flip-graph seach in line 1 of \cref{alg: APA_search}. As is typical for flip-graph algorithms, this search is conducted over a finite field (usually ${\mathbb F}_2$ or ${\mathbb F}_3$) with nilpotence checked by repeatedly multiplying each $U_jV_jW_j$.  Resulting decompositions are then converted to characteristic zero via Hensel lifting \cite{Gathen_Gerhard_2013}, which we modify to ensure that nilpotence is preserved.\footnote{This can be done since nilpotence of a product is simply a polynomial constraint in the entries of the corresponding matrices, which only requires adding additional rows to the Jacobian in the lifting step.} Of course, it is sometimes not possible to lift a decomposition with one (or more) nilpotent terms to characteristic zero, in which case we simply continue the search. 
\begin{algorithm}[t]
\caption{Search Algorithm for APA Matrix Multiplication\\
\textbf{Input:} An exact decomposition ${\mathcal M}_{\langle m,k,n \rangle} = \sum_{j=1}^r U_j \otimes V_j \otimes W_j$. \\
\textbf{Output:} A decomposition corresponding to an APA $\langle m,k,n \rangle$ algorithm with rank $r' = r-l$ for $l \geq 1$.}\label{alg: APA_search}
\begin{algorithmic}[1]
\STATE Run a flip-graph search (over ${\mathbb F}_2$ or ${\mathbb F}_3$, starting from the input decomposition) until a decomposition $\sum_{j=1}^r U_j' \otimes V_j' \otimes W_j'$ satisfying the conditions  of \cref{thm: remove_one} or \cref{thm: remove_more_than_one} is found. 
\STATE Lift the decomposition to characteristic zero. 
\IF{lifting fails}
\STATE Return to line 1 and continue the search.
\ELSE 
\STATE Construct symbolic basis transformations $R \in {\mathbb C}(\varepsilon)^{m \times m}$, $S \in {\mathbb C}(\varepsilon)^{k \times k}$, and $T \in {\mathbb C}(\varepsilon)^{n \times n}$ such that $R^{-1}U_j'S \otimes S^{-1}V_j'T \otimes T^{-1}W_j'R$ has entries in $\varepsilon {\mathbb C}[\varepsilon]$ for $1 \leq j \leq l$.
\ENDIF
\RETURN $\sum_{j=l+1}^r R^{-1}U_j'S \otimes S^{-1}V_j'T \otimes T^{-1}W_j'R $
\end{algorithmic}
\end{algorithm}

\subsubsection{Additional Results}
In practice, the initial decompositions used to start the search are particularly important. Perminov's implementation allows many flip graph walks to run in parallel, possibly with decompositions of different dimensions for each walk. Periodically, decompositions are combined or modified by meta-moves, as described in the meta-flip-graph paper \cite{Meta_Flip}. With human intervention, we also chose the most promising decompositions seen so far as seeds for a restarted search. Most new border rank decompositions were found over the course of a week using a 23 thread search. We ran the search on a Ryzen 9 9900X CPU. \\
\indent Our best result came in the case of $\langle 2,4,5 \rangle $, where we found the first rank 31 APA algorithm and therefore lowered the border rank upper bound for this size to 31. Encoding matrices for this algorithm are given in \cref{ap: encoding_matrices}. For the other schemes found in our search (including those listed in \cref{tab:results_summary,tab:results_summary2}), see \href{https://github.com/nliskij/border-rank}{https://github.com/nliskij/border-rank}.


\subsection{Generalization via Null Cone Membership Testing}\label{subsection: generalization}
Our approach to searching for suitable exact algorithms is not the most general possible. It may be the case that individual rank one terms cannot be made $ O(\varepsilon) $ but their sum can. In the representation theory literature, the sum of such terms would be called an element of the null cone of the De-Groote group. This may be promising for future work since recent papers have addressed algorithms for testing null cone membership \cite{BurgisserEtAl2018,BurgisserEtAl2019}. While such an approach can produce decompositions more general than described here, in practice we were not able to obtain any further border-rank improvements.
\section{Alternative Basis APA Matrix Multiplication}\label{section: alternative_basis}
In this section, we demonstrate that APA matrix multiplication algorithms can be accelerated by reducing their additive complexity, accomplished by applying fast, symbolic basis transformations to reduce the number of nonzero entries in the encoding matrices $U(\varepsilon)$, $V(\varepsilon)$ and $W(\varepsilon)$ (recall \cref{fig: APA_rep}). This has the effect of lowering the leading coefficient in an APA algorithm's arithmetic \textit{and} IO complexities.\footnote{While not the focus of this paper, IO complexity refers to the amount of data movement or communication performed by an algorithm.} In particular, generalizing \cite{alternative_bases}, we present a greedy algorithm for constructing these basis transformations, which depend on an evaluation parameter $\varepsilon$.

\subsection{Background and Motivation}
To outperform classical matrix multiplication in practice, the arithmetic and IO complexities of fast matrix multiplication algorithms must have small leading coefficients. To promote this, Karstadt and Schwartz~\cite{ALittleFaster2020} introduced alternative basis matrix multiplication, which takes an existing, exact algorithm and computes a set of basis transformations that sparsify the corresponding encoding matrices. That is, if $U \in {\mathbb C}^{m_0k_0 \times t}$, $V \in {\mathbb C}^{k_0n_0 \times t}$, and $W \in {\mathbb C}^{m_0n_0 \times t}$ are the encoding matrices of a $\langle m_0,k_0,n_0;t \rangle$ matrix multiplication scheme, the alternative basis method computes automorphisms $\phi: {\mathbb C}^{m_0k_0} \rightarrow {\mathbb C}^{m_0k_0}$, $\psi: {\mathbb C}^{k_0n_0} \rightarrow {\mathbb C}^{k_0n_0}$, and $\nu : {\mathbb C}^{m_0n_0} \rightarrow {\mathbb C}^{m_0n_0}$ and factors $U$, $V$ and $W$ as follows:
\begin{equation}\label{eqn: AB_encoding matrices}
    U = \phi \cdot U_{\phi}, \; \; \; V = \psi \cdot  V_{\psi}, \; \; \; \text{and} \; \; \; W = \nu^{-1} \cdot  W_{\nu},
\end{equation}
where $U_{\phi}$, $V_{\psi}$, and $W_{\nu}$ are sparse (or at least sparser than $U$, $V$, and $W$). The key insight is the following:\ rewriting \eqref{eqn: encoding_relation} as
\begin{equation}\label{eqn: alternative_basis_relation}
    c = W(U^Ta \odot V^Tb) = \nu^{-1} \left[ W_{\nu} \left(U_{\phi}^T(\phi^Ta)\odot V_{\psi}^T(\psi^Tb) \right)\right] 
\end{equation}
for $a$, $b$, and $c$ vectorized copies of $A \in {\mathbb C}^{m_0 \times k_0}$, $B \in {\mathbb C}^{k_0 \times n_0}$, and $C = AB$, we observe that $\nu(C) \in {\mathbb C}^{m_0 \times n_0}$ can be computed by applying the bilinear form represented by $U_{\phi}$, $V_{\psi}$ and $W_{\nu}$ to the transformed inputs $\phi(A) \in {\mathbb C}^{m_0 \times k_0}$ and $\psi(B) \in {\mathbb C}^{k_0 \times n_0}$. Here, we use $\phi(A)$, $\psi(B)$, and $\nu(C)$ to denote the matrices that, when vectorized, correspond to $\phi^Ta$, $\psi^Tb$, and $\nu c$. Assuming that $\phi(A)$, $\psi(B)$ and $C$ can be obtained cheaply from $A$, $B$, and $\nu(C)$, respectively, this offers a potentially much more efficient way of computing the output $C$. Note additionally that the basis transformations, like the corresponding encoding matrices, can be applied recursively for use on matrices of arbitrary size.  \\
\indent In \cite{ALittleFaster2020}, Karstadt and Schwartz used the alternative basis approach to reduce the leading coefficient in the arithmetic complexity of Strassen's $\langle 2, 2,2;7\rangle$ algorithm to 5 without increasing communication costs, making it more practical than~\cite{Cenk}. Efforts to apply this technique to other fast exact algorithms quickly followed \cite{BeniaminiSchwartzFastDecomposition,SparsifyingMatrixMultiplication,Hadas}.
Corresponding implementations show CPU and GPU speedups for the resulting alternative basis algorithms (e.g., \cite{Vaknin, Weirnik, Karppa, Buker26,DUMAS2026102524}).  In all of these, fast basis transformations are used, which can be applied recursively to $A \in {\mathbb C}^{m \times k}$, $B \in {\mathbb C}^{k \times n}$ and $C \in {\mathbb C}^{m \times n}$ at the modest overhead cost of $O(mk\log(mk))$, $O(kn\log(kn))$, and $O(mn\log(mn))$ flops, respectively (see \cite[Section 2.1]{ALittleFaster2020}).

\indent Extending this work to the APA setting is relatively straightforward. Given a set of encoding matrices $U(\varepsilon)$, $V(\varepsilon)$, and $W(\varepsilon)$ for $\langle m_0,k_0,n_0;t \rangle$ APA matrix multiplication, we seek symbolic basis  transformations $\phi_{\varepsilon}$, $\psi_{\varepsilon}$, and $\nu_{\varepsilon}$ such that 
\begin{equation}\label{eqn: APA_AB_factorization}
    U(\varepsilon) = \phi_{\varepsilon} \cdot U_{\phi}(\varepsilon), \; \; \; V(\varepsilon) = \psi_{\varepsilon} \cdot V_{\psi}(\varepsilon), \; \; \; \text{and} \; \; \; W(\varepsilon) = \nu_{\varepsilon}^{-1} \cdot W_{\nu}(\varepsilon),
\end{equation}
where $U_{\phi}(\varepsilon)$, $V_{\psi}(\varepsilon)$, and $W_{\nu}(\varepsilon)$ are the (sparser) encoding matrices of an APA bilinear form in the bases $\phi_{\varepsilon}$, $\psi_{\varepsilon}$, $\nu_{\varepsilon}$. These can then be used to generate an Alternative Basis APA Matrix Multiplication algorithm, which is shown here as \cref{alg:alt_basis_APA}. 

\begin{algorithm}[h]
\caption{Alternative Basis APA Matrix Multiplication (\textbf{AltBasis-Recursive-APA}$_{\langle \phi, \psi, \nu \rangle}$)\\
\textbf{Input:} $A \in {\mathbb C}^{m \times k}$, $B \in {\mathbb C}^{k \times n}$, and $\varepsilon \in {\mathbb C}$.\\
\textbf{Output:} $M = AB + \varepsilon R_1 + \varepsilon^2R_2 + \cdots + \varepsilon^{p-1} R_{p-1}$ for residual matrices $R_1, \ldots, R_{p-1} \in {\mathbb C}^{m \times n}$ and $p = dL+1$.}\label{alg:alt_basis_APA}
\begin{algorithmic}[1]
\STATE $\widetilde{A} = \phi_{\varepsilon}(A)$
\STATE $\widetilde{B} = \psi_{\varepsilon}(B)$
\STATE{$\widetilde{M} = $ \textbf{Recursive-APA$_{\langle U_{\phi}(\varepsilon),V_{\psi}(\varepsilon),W_{\nu}(\varepsilon)\rangle}$}$(\widetilde{A},\widetilde{B})$}
\STATE $M = \nu_{\varepsilon}^{-1}(\widetilde{M})$
\RETURN{$M$}
\end{algorithmic}
\end{algorithm}

In this pseudocode, \textbf{Recursive-APA}$_{\langle U_{\phi}(\varepsilon), V_{\psi}(\varepsilon), W_{\nu}(\varepsilon) \rangle}(\widetilde{A},\widetilde{B})$ evaluates the recursive bilinear form corresponding to $U_{\phi}(\varepsilon)$, $V_{\psi}(\varepsilon)$, and $W_{\nu}(\varepsilon)$ at the transformed inputs $\widetilde{A}$ and $\widetilde{B}$, obtained by applying $\phi_{\varepsilon}$ and $\psi_{\varepsilon}$ to $A$ and $B$. Following the notation from the previous section, $d$ is the degree of the underlying APA matrix multiplication algorithm and $L$ is the number of recursive steps applied. The complexity of \cref{alg:alt_basis_APA}, in terms of $t$ and the sparsity of $U_{\phi}(\varepsilon)$, $V_{\psi}(\varepsilon)$, and $W_{\nu}(\varepsilon)$ and including the cost of the basis transformations, follows from the exact case -- e.g., \cite{ALittleFaster2020,alternative_bases}.

\subsection{Symbolic Sparsification}
\indent One question remains:\ how do we obtain the symbolic basis transformations $\phi_{\varepsilon}$, $\psi_{\varepsilon}$ and $\nu_{\varepsilon}$? In this work, we use a symbolic version of the greedy sparsification algorithm first proposed by Holtz et al.\ \cite{alternative_bases}, which, in the context of fast exact matrix multiplication, is capable of producing optimal (in terms of resulting sparsity) basis transformations and has been shown to reduce the leading coefficient of an algorithm's arithmetic complexity by up to 89\%. Our symbolic version is shown here as \cref{alg: APA_sparsificiation}. For simplicity we restrict our focus to $U(\varepsilon)$, but the algorithm applies identically to $V(\varepsilon)$ and $W(\varepsilon)$.

\begin{algorithm}[h]
\caption{Symbolic Sparsification Algorithm\\
\textbf{Input:} $U(\varepsilon) \in {\mathbb C}[\varepsilon,\varepsilon^{-1}]^{m_0k_0 \times t}$ with $\text{rank}_{{\mathbb C}(\varepsilon)}(U(\varepsilon)) = m_0k_0$ and a constant $r\in \mathbb{R}^+$. \\
\textbf{Output:} Invertible $ \phi_{\varepsilon}^{-1} \in {\mathbb C}[\varepsilon, \varepsilon^{-1}]^{m_0k_0 \times m_0k_0}$ such that $U_{\phi}(\varepsilon) = \phi_{\varepsilon}^{-1}U(\varepsilon)$ is as sparse as possible. }\label{alg: APA_sparsificiation}
\begin{algorithmic}[1]
\STATE $\phi_{\varepsilon}^{-1} = \emptyset$ 
\STATE $w_c =$ \textbf{Generate-Constraints}$(\phi_{\varepsilon}^{-1})$
\FOR{$i = 1:r \cdot t$}
\WHILE{\textbf{Next-Vector}$(i,w_c)\neq \emptyset$}
\STATE $v = $ \textbf{Next-Vector}$(i,w_c)$
\STATE $\phi_{\varepsilon}^{-1} = [\phi_{\varepsilon}^{-1}; v^T]$ 
\STATE $w_c = $ \textbf{Generate-Constraints} $(\phi_{\varepsilon}^{-1})$
\IF{\text{size}$(\phi_{\varepsilon}^{-1},1)= m_0k_0$}
\RETURN $\phi_{\varepsilon}^{-1}$
\ENDIF
\ENDWHILE
\ENDFOR
\end{algorithmic}
\end{algorithm}

\cref{alg: APA_sparsificiation} progressively constructs a set of vectors $\phi_{\varepsilon}^{-1}$ in ${\mathbb C}[\varepsilon, \varepsilon^{-1}]^{m_0k_0}$ according to the following rules:\ (1) each newly added vector $v$ should be linearly independent to the current collection and (2) $v^TU(\varepsilon)$ should be as sparse as possible. This is done via the following subroutines:
\begin{enumerate}
    \item \textbf{Generate-Constraints}$(\phi_{\varepsilon}^{-1})$ --  returns a set of constraints $w_c$ that, when satisfied by a vector $v \in {\mathbb C}[\varepsilon,\varepsilon^{-1}]^{m_0k_0}$, implies that it is linearly independent to every (row) vector in $\phi_{\varepsilon}^{-1}$. 
    \item \textbf{Next-Vector}$(i,w_c)$ --  returns a vector $v \in {\mathbb C}[\varepsilon,\varepsilon^{-1}]^{m_0k_0}$ that satisfies both the constraints $w_c$ and weight$(v^TU(\varepsilon)) = i$. 
\end{enumerate} 
For the latter, the function $\text{weight}(\cdot )$ is a measure of sparsity defined as
\begin{equation}\label{eqn: weight_defn}
    \text{weight}(v^TU(\varepsilon)) \coloneqq \text{nnz}(v^TU(\varepsilon)) + \text{nns}(v^TU(\varepsilon)) + \text{nnc}(v^TU(\varepsilon))\cdot p,
\end{equation}
where nnz is the number of nonzero entries, nns is the number of constant, nonsingleton entries (that is, not $-1$, $0$, or $1$), and nnc is the number of variable entries,  i.e., that contain $\varepsilon$, for $p$ a tunable parameter. 
\eqref{eqn: weight_defn} naturally extends to the matrix $U(\varepsilon)$ as follows
\begin{equation}\label{eqn: weight_matrix}
    \begin{aligned}
    \text{weight}(U(\varepsilon)) &\coloneqq \text{nnz}(U(\varepsilon)) + \text{nns}(U(\varepsilon)) + \text{nnc}(U(\varepsilon))\cdot p-\text{ncols}(U(\varepsilon)),\\
    \end{aligned}
\end{equation}
where ncols is the number of columns.\footnote{In the case of $\text{weight}(W(\varepsilon))$, ncols is replaced by nrows, the number of rows in $W(\varepsilon)$.} 
By repeatedly calling \textbf{Next-Vector} for increasing values of $i$, \cref{alg: APA_sparsificiation} promotes optimal sparsity; that is, each newly added vector $v$ has the smallest value of weight$(v^TU(\varepsilon))$ possible. 
Accordingly, the constant $r$ should be chosen so that $r\cdot t$ is the maximum possible value of $\text{weight}(v^TU(\varepsilon))$, specifically to guarantee that the algorithm terminates.  \\
\indent Importantly, we have made several simplifications here both for convenience and to accelerate \cref{alg: APA_sparsificiation}. We summarize them below.
\begin{enumerate}
    \item First, we assume that the encoding matrix $U(\varepsilon)$ belongs to ${\mathbb C}[\varepsilon,\varepsilon^{-1}]^{m_0k_0 \times t}$, equivalently that its entries are Laurent polynomials in $\varepsilon$ as opposed to arbitrary rational functions in $\varepsilon$. Similarly, \textbf{Next-Vector} returns $v \in {\mathbb C}[\varepsilon,\varepsilon^{-1}]^{m_0k_0}$. We focus on Laurent polynomials for the following reasons:\ (1) essentially all APA algorithms in the literature -- including those derived in \cref{section: search} -- have encoding matrices with Laurent polynomial entries and (2) restricting the search to ${\mathbb C}[\varepsilon,\varepsilon^{-1}]$ greatly improves its efficiency without sacrificing sparsity (see \cref{thm:independence}). Note that, while this ultimately implies that $\phi_{\varepsilon}^{-1}$ will have Laurent-polynomial entries, its inverse $\phi_{\varepsilon}$ (which recall is necessary to evaluate \cref{alg:alt_basis_APA}) may contain rational entries. 
    \item Second, we choose not to differentiate between different kinds of variable entries in our measure of sparsity -- i.e., a simple monomial entry like $\varepsilon^2$ and a much more complicated Laurent polynomial contribute the same to \eqref{eqn: weight_defn} and \eqref{eqn: weight_matrix}. We justify this choice by noting that, when running \cref{alg:alt_basis_APA}, any variable entries in the encoding matrices/basis transformations can be computed ahead of time at minor additional cost (that will not contribute to the leading-term complexity of the algorithm). Of course, \cref{alg: APA_sparsificiation} can be run with a different definition of $\text{weight}(\cdot)$, if desired.
\end{enumerate}

As mentioned above, the parameter $p$ is tunable and in particular determines how much to penalize variable entries in $v^TU(\varepsilon)$. Choosing a large value of $p$ can often lead to basis transformations that eliminate $\varepsilon$ terms entirely, in which case $U_{\phi}(\varepsilon)$, $V_{\psi}(\varepsilon)$, and $W_{\nu}(\varepsilon)$ in \eqref{eqn: APA_AB_factorization} are constant (see, e.g., \cref{subsec: section_two_special_case}). \\
\indent In the remainder of this subsection, we discuss implementation details for the key subroutines of \cref{alg: APA_sparsificiation} before discussing its optimality. In \cref{subsection: AB_results}, we present the results of applying alternative bases to the APA routines found in \cref{section: search}.

\subsubsection{Implementation Details}
There are a handful of technical questions concerning \cref{alg: APA_sparsificiation} that we address below:
\begin{enumerate}
    \item How does \textbf{Generate-Constraints} work? In \cref{prop:certificate-construction}, we demonstrate that it is sufficient to define constraints in terms of a basis for the kernel of $\phi_{\varepsilon}^{-1}$.
    \item Is \cref{alg: APA_sparsificiation} well-posed -- i.e., can \textbf{Next-Vector} always find a solution with Laurent-polynomial entries? Moreover, can this be done without sacrificing sparsity? \cref{thm:independence} verifies that the answer to both is yes (under certain conditions).
\end{enumerate}

We start with the subroutine \textbf{Generate-Constraints}, which can be implemented by computing a basis for the kernel of $\phi_{\varepsilon}^{-1}$ (in the vector space ${\mathbb C}(\varepsilon)^n$) and subsequently defining the set of constraints as those that guarantee that the next vector $v$ has a nontrivial inner product with at least one of these basis vectors. This follows from a basic linear algebra result (see e.g., \cite[Theorem 3.2]{alternative_bases}), which we state here for ${\mathbb C}(\varepsilon)$.

\begin{prop}\label{prop:certificate-construction}
Let $v_1, \ldots, v_m \in {\mathbb C}(\varepsilon)^n$ and let $A \in {\mathbb C}(\varepsilon)^{m \times n}$ be the matrix whose $i$-th row is $v_i^T$. Suppose  that $q_1, \ldots, q_r$ form a basis for the kernel of $A$ and let $v \in {\mathbb C}(\varepsilon)^n$. In this case, $v \notin \text{\normalfont span}\left\{ v_1, \ldots, v_m \right\}$ if and only if $q_i^Tv \neq 0$ for at least one $i$. Hence, kernel basis vectors of $A$ can be used to certify the independence of $v$ from the rows of $A$.
\end{prop}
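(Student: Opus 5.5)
The plan is to argue over the field $K = {\mathbb C}(\varepsilon)$ using rank--nullity and orthogonal complements with respect to the bilinear pairing $(x,y) \mapsto x^T y$ on $K^n$. This pairing is nondegenerate over any field, even though it is not an inner product. The key object is the row space $\mathcal{R} = \text{span}\{v_1,\ldots,v_m\} \subseteq K^n$. By definition, $\ker(A) = \{q : Aq = 0\} = \{q : v_i^T q = 0 \ \forall i\}$, which is exactly the annihilator $\mathcal{R}^{\perp}$ of $\mathcal{R}$ under this pairing.

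The first step is the forward direction, stated contrapositively. Suppose $v \in \mathcal{R}$, so $v = \sum_i c_i v_i$ with $c_i \in K$. Then for each kernel basis vector $q_j$ we have $q_j^T v = \sum_i c_i\, v_i^T q_j = 0$. Hence, if $q_j^T v \neq 0$ for some $j$, then $v \notin \mathcal{R}$.

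The second step is the converse, and it is the only place where any care is needed. The claim is that if $q_j^T v = 0$ for all $j$, then $v \in \mathcal{R}$. Under that hypothesis $v$ annihilates all of $\ker(A) = \mathcal{R}^\perp$, so $v \in (\mathcal{R}^\perp)^\perp$. It then suffices to prove $(\mathcal{R}^\perp)^\perp = \mathcal{R}$. The inclusion $\mathcal{R} \subseteq (\mathcal{R}^\perp)^\perp$ is immediate. For the reverse inclusion I compare dimensions via rank--nullity. Since $\mathcal{R}^\perp = \ker(A)$, we get $\dim \mathcal{R}^\perp = n - \text{rank}(A) = n - \dim \mathcal{R}$. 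Applying the same fact to the matrix whose rows are $q_1,\ldots,q_r$ gives $\dim (\mathcal{R}^\perp)^\perp = n - r = \dim\mathcal{R}$. Equal dimensions plus inclusion yield equality.

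I expect no real obstacle. The one point to flag is that everything happens over ${\mathbb C}(\varepsilon)$, with plain transpose rather than conjugate transpose. Consequently only field-theoretic facts may be used, namely rank--nullity and the fact that row rank equals column rank, and no positivity arguments. These facts hold verbatim over any field, so the argument of \cite[Theorem 3.2]{alternative_bases} transfers unchanged. The final sentence of the proposition, that kernel basis vectors certify independence, is then just a restatement of the equivalence.
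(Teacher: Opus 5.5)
Your proof is correct. The forward direction is the direct computation $q_j^Tv=\sum_i c_i\, v_i^Tq_j=0$, and the converse follows from $(\mathcal{R}^\perp)^\perp=\mathcal{R}$ via rank--nullity, which holds over the field ${\mathbb C}(\varepsilon)$ with the plain transpose pairing. The paper gives no separate proof and simply invokes the same standard fact from \cite[Theorem 3.2]{alternative_bases} over ${\mathbb C}(\varepsilon)$, so your argument supplies the details that justify that transfer.
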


With this, \textbf{Next-Vector} can be implemented (symbolically) via the Z3 satisfiability solver in Python, which takes the constraints implied by \cref{prop:certificate-construction} and searches  for admissible vectors in ${\mathbb C}[\varepsilon, \varepsilon^{-1}]$ such that $\text{weight}(v^TU(\varepsilon))$ is minimal. In practice, this corresponds to solving $v^TU(\varepsilon) = u$ for $u$ a vector with Laurent polynomial entries and a pre-set weight value. The following technical result guarantees that such a system -- subject to the linear-independence constraints obtained from \textbf{Generate-Constraints} --  can be solved over ${\mathbb C}[\varepsilon,\varepsilon^{-1}]^{m_0k_0}$ if it is solvable over ${\mathbb C}(\varepsilon)^{m_0k_0}$, thereby guaranteeing that \cref{alg: APA_sparsificiation} is well-posed. For details on how the Z3 solver chooses candidates $u$ and/or decides to terminate the search for a given value of $\text{weight}(\cdot)$, we refer readers to \cite{demoura2008z}. 

\begin{thm}\label{thm:independence}
Let $A\in {\mathbb C}[\varepsilon,\varepsilon^{-1}]^{m \times n}$ have $\text{\normalfont rank}_{{\mathbb C}(\varepsilon)}(A)=\rho\le n$, and assume every nonzero entry of $A$ has exponent support contained in $[\alpha,\beta]$ for $\alpha \leq 0$. Let $d=\beta-\alpha$ and let $b\in\mathbb {\mathbb C}[\varepsilon, \varepsilon^{-1}]^n $ have entries with exponents in $[B_-,B_+]$, again with $B_- \leq 0$. Define $A_0=\varepsilon^{-\alpha}A$, whose entries are polynomials in $\varepsilon$ of degree at most $d$. Finally, let $N_i$ be the full-rank $\rho\times\rho$ minors of $A_0$ with $\Delta_i(\varepsilon) = \det(N_i)$ and let $S \subseteq {\mathbb C}[\varepsilon, \varepsilon^{-1}]^m$ be a collection of vectors. Suppose that the following hold:
\begin{enumerate}
    \item There exist polynomials $h_i(\varepsilon)\in\mathbb {\mathbb C}[\varepsilon]$ with $\deg h_i\le H$ for all $i$ such that $\sum_i h_i(\varepsilon)\Delta_i(\varepsilon)=c\varepsilon^k$ for an integer $k \geq 0$ and a nonzero constant $c \in {\mathbb C}$.
    \item There exists $y\in {\mathbb C}(\varepsilon)^m$ such that $y^TA=b^T$ and $y \notin \text{\normalfont span}_{{\mathbb C}(\varepsilon)}(S)$.
\end{enumerate}
Then there exists a Laurent-polynomial solution $y'\in {\mathbb C}[\varepsilon,\varepsilon^{-1}]^m$ such that
$(y')^TA = b^T$, $y' \notin \text{\normalfont span}_{{\mathbb C}(\varepsilon)}(S)$, and every nonzero entry of $y'$ has exponent support contained in 
\[
I = \left[ \min \left\{ 0, \;  B_- -\alpha-k \right\} , \; \; \max \left\{B_+ -\alpha -k + H + (\rho-1)(\beta - \alpha), \; \rho(\beta-\alpha) \right\} \right].
\]
\end{thm}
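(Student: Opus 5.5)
The plan is to clear denominators, solve the system by Cramer's rule on each maximal nonsingular minor, and glue these local solutions together with the B\'ezout-type identity in hypothesis 1. This gives one Laurent-polynomial particular solution. Membership in $\text{span}_{{\mathbb C}(\varepsilon)}(S)$ is then fixed by adding a suitable left-kernel vector of controlled degree.

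Set $b_0=\varepsilon^{-\alpha}b$. Then $y^TA=b^T$ if and only if $y^TA_0=b_0^T$, and $b_0$ has exponents in $[B_--\alpha,B_+-\alpha]$. For each full-rank minor $N_i$, with row set $I_i$ and column set $J_i$, define $y_i\in{\mathbb C}(\varepsilon)^m$ to be supported on $I_i$ with $y_i(I_i)^T=b_0(J_i)^T\,\mathrm{adj}(N_i)/\Delta_i$. By construction $y_i^TA_0$ agrees with $b_0^T$ on the columns $J_i$. To see that it solves the full system, use hypothesis 2. It gives $b_0^T=(y^T)A_0$, so $b_0^T$ lies in the ${\mathbb C}(\varepsilon)$-row space of $A_0$. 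Since $\text{rank}(A_0)=\rho$ and the columns $J_i$ of the rows $I_i$ are independent, every row of $A_0$ is determined by its entries in $J_i$. Hence any vector in the row space that agrees with $b_0^T$ on $J_i$ equals $b_0^T$. This consistency argument is the step I would write out carefully.

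Next define
\[
y^* = \frac{1}{c\varepsilon^k}\sum_i h_i\Delta_i\,y_i=\frac{1}{c\varepsilon^k}\sum_i h_i(\varepsilon)\,\bigl(\text{zero-padded } \mathrm{adj}(N_i)^Tb_0(J_i)\bigr).
\]
Since $\sum_i h_i\Delta_i=c\varepsilon^k$, this is an affine combination of solutions, so $y^{*T}A_0=b_0^T$. The $\Delta_i$ cancel, leaving a Laurent polynomial. For the degree count, the adjugate entries are polynomials of degree at most $(\rho-1)d$, the $h_i$ have nonnegative exponents of degree at most $H$, and $b_0$ contributes exponents in $[B_--\alpha,B_+-\alpha]$; dividing by $\varepsilon^k$ shifts everything down by $k$. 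The exponents of $y^*$ therefore lie in $[B_--\alpha-k,\;B_+-\alpha-k+H+(\rho-1)d]$.

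If $y^*\notin\text{span}(S)$, we are done. Otherwise we use the fact that $y-y^*$ lies in the left kernel $K$ of $A_0$ and is not in $\text{span}(S)$. I would build a spanning set of $K$ from polynomial vectors via the standard Cramer construction. Fix one nonsingular minor $N_i$; for each row $r\notin I_i$, the vector with entry $\Delta_i$ in position $r$ and entries $-(A_0(r,J_i)\,\mathrm{adj}(N_i))$ on $I_i$ lies in $K$. Its entries are polynomials of degree at most $\rho d$ with nonnegative exponents, and these $m-\rho$ vectors span $K$. Since $y-y^*\notin\text{span}(S)$, some spanning vector $z$ lies outside $\text{span}(S)$. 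Then $y'=y^*+z$ solves the system and lies outside $\text{span}(S)$, because $y^*\in\text{span}(S)$.

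The exponents of $y'$ lie in the union of the two ranges, $[B_--\alpha-k,\,\cdot\,]$ and $[0,\rho d]$. This is exactly the interval $I$ in the statement. The main obstacles are the consistency argument above and verifying that the kernel vectors span $K$ with the claimed degree bound $\rho(\beta-\alpha)$.
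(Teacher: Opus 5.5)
Your proposal is correct and follows essentially the same route as the paper. Both arguments build adjugate (Cramer) solutions on each full-rank minor, glue them with the B\'ezout-type identity from hypothesis~1, and then, if needed, add one of the $m-\rho$ polynomial left-kernel vectors built from a single minor to leave $\mathrm{span}_{{\mathbb C}(\varepsilon)}(S)$. The remaining differences are cosmetic: you rescale to $b_0=\varepsilon^{-\alpha}b$ at the start, while the paper divides by $\varepsilon^{k+\alpha}$ at the end. You also argue consistency through injectivity of restricting the row space to the columns $J_i$, where the paper writes each column of $A_0$ as a combination of the columns in $J_i$.
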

\begin{proof} We start by noting $\text{rank}_{{\mathbb C}(\varepsilon)}(A_0) = \rho$ and $A = \varepsilon^{\alpha}A_0$. Hence, if $y^TA = b^T$ then $z^TA_0 = b^T$ for $z = \varepsilon^{\alpha} y \in {\mathbb C}(\varepsilon)^m$. With this in mind, let $I_i$ and $J_i$ collect, respectively, the row and column indices  corresponding to the minor $N_i$ of $A_0$. If $\text{adj}(N_i)$ is the adjugate of $N_i$, then we have $\text{adj}(N_i)N_i = \Delta_i(\varepsilon)I_{\rho}$ and 
therefore
\begin{equation}
    b_i^T\text{adj}(N_i)N_i = \Delta_i(\varepsilon)b_i^T,
\end{equation}
where $b_i = b\big|_{J_i} \in {\mathbb C}(\varepsilon)^\rho$ is the vector obtained by restricting $b$ to the indices in $J_i$. Define now $u_i^T = b_i^T\text{adj}(N_i)$ and let $\tilde{u}_i\in\mathbb C[\varepsilon,\varepsilon^{-1}]^m$ such that 
\begin{equation}
    \tilde{u}_i\big|_{I_i} = u_i \; \; \; \;  \text{and} \; \; \; \;  \tilde{u}_i \big|_{I_i^C} = 0.
\end{equation}
In this case
\begin{equation}
    \tilde{u}_i ^TA_0 \big|_{J_i} = u_i^TN_i = \Delta_i(\varepsilon) b_i^T = \left( \Delta_i(\varepsilon) b \big|_{J_i} \right)^T.
\end{equation}
Now since $N_i$ is a full rank minor of $A_0$ and $\text{rank}_{\mathbb C(\varepsilon)}(A_0) = \rho$, we can express each column of $A_0$ as a linear combination of the columns with indices in $J_i$. Letting $A_0(:,r) = \sum_{u \in J_i} \lambda_u(\varepsilon) A_0(:,u)$ for some $\lambda_u(\varepsilon) \in {\mathbb C}(\varepsilon)$ and recalling $z^TA_0 = b^T$, we have
\begin{equation} \label{eqn: u_identity}
    \aligned 
    \tilde{u}_i^TA_0(:,r) &= \tilde{u}_i^T \left( \sum_{u \in J_i} \lambda_u(\varepsilon) A_0(:,u) \right)  = \sum_{u \in J_i} \lambda_u(\varepsilon) (\tilde{u}_i^T A_0(:,u)) = \sum_{u \in J_i} \lambda_u(\varepsilon) \Delta_i(\varepsilon) b_u  \\
    & = \Delta_i(\varepsilon) \sum_{u \in J_i}\lambda_u(\varepsilon) (z^TA_0(:,u))  = \Delta_i(\varepsilon) z^T \left( \sum_{u \in J_i} \lambda_u (\varepsilon) A_0(:,u)\right)  = \Delta_i(\varepsilon) z^T A_0(:,r) \\
    & = \Delta_i(\varepsilon) b_r,
    \endaligned 
\end{equation}
where $b_r$ is the $r$-th entry of $b$. Hence, $\tilde{u}_i^TA_0 = \Delta_i(\varepsilon) b^T$. Suppose now 
\begin{equation}\label{eqn: y_prime}
    y' = \frac{1}{c\varepsilon^{k+\alpha}} \sum_{i} h_i(\varepsilon)\tilde{u}_i,
\end{equation}
which we note has Laurent-polynomial entries by construction. Combining \eqref{eqn: u_identity} with $\sum_i h_i(\varepsilon) \Delta_i(\varepsilon) = c\varepsilon^k$ and $A = \varepsilon^{\alpha} A_0$, it is easy to see that $(y')^TA = b^T$, as desired. Moreover, since the degree of any entry in $\text{adj}(N_i)$ is at most $(\beta-\alpha)(\rho-1)$, the exponent support for entries of $y'$ is a subset of
\begin{equation}\label{eqn: initial_interval}
I = \left[ B_- -\alpha-k , \; \;B_+ -\alpha -k + H + (\beta-\alpha)(\rho-1) \right].
\end{equation}
It remains to show that $y'$ does not belong to the span of $S$ in ${\mathbb C}(\varepsilon)^m$. To do this, let $v_1, \ldots v_{m-\rho} \in {\mathbb C}[\varepsilon, \varepsilon^{-1}]^m$ be a basis for the \textit{left} null space of $A$. By construction, $y' - y$ belongs to this space and hence can be written as a linear combination of $v_1, \ldots, v_{m-\rho}$. Suppose now that $y' \in \text{span}_{\mathbb{C}(\varepsilon)}(S)$. Since $y \notin \text{span}_{\mathbb{C}(\varepsilon)}(S)$ we must have $y'-y \notin \text{span}_{\mathbb{C}(\varepsilon)}(S)$ and therefore $v_j \notin \text{span}_{\mathbb{C}(\varepsilon)}(S)$ for at least one $j$. But in this case, we can simply replace $y'$ by $y'+v_j$, which satisfies
\begin{equation}
    (y'+v_j)^TA = (y')^TA + v_j^TA = b^T + 0 = b^T
\end{equation}
and does not belong to $\text{span}_{\mathbb{C}(\varepsilon)}(S)$. To verify that this will not destroy the structure of $y'$, we next show that $v_1, \ldots, v_{m-\rho}$ can be constructed to have Laurent polynomial entries. We start by noting that $A$ and $A_0$ have the same left null spaces over ${\mathbb C}(\varepsilon)$, so it suffices to construct a basis for $A_0$. Let $I_i$ and $J_i$ again correspond to the row/column indices of the minor $N_i$ of $A_0$. Since the columns of $N_i$ are linearly independent, we have
\begin{equation}
    q^TA_0 = 0 \; \; \iff \; \; q^TA_0(:,J_i)  = 0
\end{equation}
For $j \in I_i^C$ let $c_j = A_0(j,J_i)$ and define $v_j \in {\mathbb C}[\varepsilon]^m$ to be the vector satisfying the following:
\begin{equation}
    v_j\big|_{I_i} = -\text{adj}(N_i)^Tc_j^T, \; \; v_j(j) = \Delta_i(\varepsilon), \; \; \text{and} \; \;  v_j \big|_{\left\{I_i\cup j \right\}^C} = 0.
\end{equation}
This vector has polynomial entries because $\operatorname{adj}(N_i)$, $c_j$, and $\Delta_i(\varepsilon)$ all have entries in ${\mathbb C}[\varepsilon]$.

Then, by construction, we have
\begin{equation}
    \aligned
    v_j^TA_0(:,J_i) &=  (v_j\big|_{I_i^C})^TA_0(I_i^C,J_i) + (v_j\big|_{I_i})^TN_i = \Delta_i(\varepsilon)c_j - c_j \text{adj}(N_i) N_i  =  \Delta_i(\varepsilon) c_j - \Delta_i(\varepsilon) c_j = 0.
    \endaligned
\end{equation}
Moreover, $v_1, \ldots, v_{m-\rho}$ are clearly linearly independent, as each contains the nonzero entry $\Delta_i(\varepsilon)$ where the others are zero. Since there are $m-\rho$ such vectors, and 
$\operatorname{rank}_{\mathbb C(\varepsilon)}(A_0)=\rho$, $v_1,\ldots,v_{m-\rho}$ must form a
$\mathbb C(\varepsilon)$-basis of the left null space of $A_0$. \\
\indent Finally, to complete the proof we update \eqref{eqn: initial_interval} -- i.e., since $y'$ (as defined in \eqref{eqn: y_prime}) might need to be replaced by $y' + v_j$, where each entry of $v_j$ is a polynomial of degree at most $\rho(\beta- \alpha)$. Shifting \eqref{eqn: initial_interval} accordingly gives the listed exponent range. 
\end{proof}

\cref{thm:independence} not only guarantees that \textbf{Next-Vector} can find Laurent-polynomial-based solutions but also bounds the corresponding range of exponents in the its entries. This can be used to further accelerate the \cref{alg: APA_sparsificiation} -- e.g., by providing Z3 with a narrower range of exponents to search over. Of course, for this to be usable we need to bound $H$ and $k$. This can be done by applying the following identity to the $\Delta_i(\varepsilon)$. 

\begin{prop}[Hilbert nullstellensatz]\label{lemma:bezout-bound}
Let $f_1,\dots,f_m\in \mathbb {\mathbb C}[\varepsilon]$, and let $D=\max_i \deg f_i$.
If
\begin{equation}\label{eqn: gcd is one}
\gcd(f_1,\dots,f_m)=1
\end{equation}
then there exist polynomials $a_1(\varepsilon),\dots,a_m(\varepsilon)\in {\mathbb C}[\varepsilon]$ such that $
\sum_{i=1}^m a_i(\varepsilon)f_i(\varepsilon)=1 $ where $\deg a_i\le \max \left\{ 3, D \right\}$ for every $i$.
\end{prop}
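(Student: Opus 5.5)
The plan is to reduce to the two-polynomial case by a generic linear combination, and then use the classical degree-bounded Bézout identity. In fact this gives the stronger bound $\deg a_i \le D$, which implies the stated $\max\{3,D\}$.

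\textbf{Trivial cases.} Since $\gcd(f_1,\dots,f_m)=1$, not all $f_i$ vanish; fix an index $j$ with $f_j\neq 0$ and set $g=f_j$.
\begin{itemize}
\item If $\deg g=0$, take $a_j=1/g$ and $a_i=0$ for $i\neq j$.
\item Otherwise $g$ has finitely many distinct roots $\zeta_1,\dots,\zeta_s\in{\mathbb C}$.
\end{itemize}

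\textbf{Choosing the combination.} For each root $\zeta_\ell$, the gcd condition gives some $f_i$ with $f_i(\zeta_\ell)\neq 0$. Hence the linear form $c\mapsto \sum_i c_i f_i(\zeta_\ell)$ on ${\mathbb C}^m$ is not identically zero, and its zero set is a proper hyperplane. Since ${\mathbb C}^m$ is not a finite union of proper hyperplanes, we can choose $c\in{\mathbb C}^m$ off all of them. Then $h=\sum_i c_i f_i$ shares no root with $g$, so $\gcd(g,h)=1$, and $\deg h\le D$. If $h$ is a nonzero constant, we are done immediately with $a_i=c_i/h$.

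\textbf{Two-polynomial Bézout with degree control.} Because ${\mathbb C}[\varepsilon]$ is a PID and $\gcd(g,h)=1$, there are $a,b$ with $ag+bh=1$. Divide $b$ by $g$: $b=qg+r$ with $\deg r<\deg g$. Then
\[
(a+qh)\,g + r\,h = 1 .
\]
Set $\tilde a=a+qh$, so $\tilde a g = 1-rh$.
\begin{itemize}
\item If $\tilde a\neq 0$, then $\deg(\tilde a g)=\deg(1-rh)\le \deg r+\deg h\le \deg g-1+\deg h$ (using $\deg g\ge 1$), so $\deg\tilde a\le \deg h-1<D$.
\item Also $\deg r\le \deg g-1\le D-1$.
\end{itemize}

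\textbf{Assembling the $a_i$.} Now
\[
1=\tilde a f_j + r\sum_i c_i f_i ,
\]
so take $a_j=\tilde a + r c_j$ and $a_i = r c_i$ for $i\neq j$. Each has degree at most $D-1\le\max\{3,D\}$.

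\textbf{Main obstacle.} The only delicate step is the degree bookkeeping in the reduction of $b$ modulo $g$. One must check that $\deg g\ge 1$ is in force there (it is, since the constant case was handled separately), and that the possibilities $\tilde a=0$ or $r=0$ cause no trouble (they do not, since zero polynomials trivially satisfy the bound). The genericity step is routine because ${\mathbb C}$ is infinite.
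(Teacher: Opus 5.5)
Your proof is correct, and it takes a different route from the paper. The paper gives no argument of its own: it calls the statement classical and cites Kollár's effective Nullstellensatz for the degree bound $\max\{3,D\}$.

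Your argument instead uses the fact that ${\mathbb C}[\varepsilon]$ is a Euclidean domain. The generic combination $h=\sum_i c_i f_i$ reduces the problem to the coprime pair $(f_j,h)$. Reducing the Bézout cofactor of $h$ modulo $f_j$ then gives the classical degree control. This is elementary and proves more than stated, namely $\deg a_i\le\max\{0,D-1\}$.

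That sharper bound matters downstream. The paper next claims $\deg a_i\le\rho(\beta-\alpha)-r$, which does not literally follow from the proposition as stated because it drops the $\max\{3,\cdot\}$. Your bound justifies that claim directly and lets one drop the $\max\{3,\cdot\}$ in the bound on $H$. The paper's citation is shorter, but it imports multivariate machinery the one-variable case does not need.

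One bookkeeping remark. When $\deg f_j\ge 1$, the case $r=0$ is not merely harmless but impossible, since it would force $\tilde a f_j=1$. Likewise, $\tilde a\neq 0$ forces $\deg(1-rh)=\deg(\tilde a f_j)\ge 1$. This is exactly what makes your inequality $\deg(1-rh)\le\deg r+\deg h$ valid in that case.
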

\begin{proof}
    This is a classical result in algebraic geometry. For the degree bound, see \cite{Kollar}.
\end{proof}

In particular, suppose that $g(\varepsilon)=\gcd(\Delta_1(\varepsilon),\dots,\Delta_m(\varepsilon))$. Since \cref{thm:independence} assumes that the ideal generated by the $\Delta_i(\varepsilon)$ contains a monomial, $g(\varepsilon)$ divides a monomial and is therefore one itself (up to a nonzero scalar). With this in mind, write $g(\varepsilon) = c\varepsilon^r$ for some $c \neq 0$ and $r \geq 0$. Since $g(\varepsilon)$ divides the $\Delta_i(\varepsilon)$, and recalling that each $\Delta_i(\varepsilon)$ has degree at most $\rho(\beta-\alpha)$, we have
\begin{equation}\label{eqn: r bound}
0\le r\le \rho(\beta-\alpha).
\end{equation}
For each $i$, write $\Delta_i(\varepsilon)=c\varepsilon^r f_i(\varepsilon)$.
By construction, $\gcd(f_1,\dots,f_m)=1$
and
\begin{equation}
\deg f_i=\deg \Delta_i-r\le \rho(\beta-\alpha)-r,
\end{equation} 
so \cref{lemma:bezout-bound} implies that there exist polynomials $a_i(\varepsilon)\in\mathbb C[\varepsilon]$ such that $\sum_i a_i(\varepsilon)f_i(\varepsilon)=1$
and $\deg a_i\le \rho(\beta-\alpha)-r$.
Multiplying by $c\varepsilon^r$, we obtain
\begin{equation}
\sum_i a_i(\varepsilon)\Delta_i(\varepsilon)=c\varepsilon^r.
\end{equation}
Thus, in \cref{thm:independence}, we can take $k=r$ and $H\le \max \left\{ 3, \rho(\beta-\alpha)-r\right\}$. Notice that, because we simply need to keep the constants the same to have a vector with same or better weight, we can set $B_- = B_+ = 0$. 
\subsubsection{Optimality}
With the above results, \cref{alg: APA_sparsificiation} is guaranteed to achieve optimal sparsity, as measured by \eqref{eqn: weight_defn} and \eqref{eqn: weight_matrix}, under the assumption that \textbf{Next-Vector} returns an admissible vector whenever one exists (guaranteed as long as the first condition of \cref{thm:independence} holds for the corresponding encoding matrix). This can be shown by simply replicating the optimality proof from~\cite{alternative_bases}, interpreting the notations of span, rank, and independence as defined over ${\mathbb C}(\varepsilon)$. Note that this still holds under alternative definitions of $\text{weight}(\cdot)$, meaning \cref{alg: APA_sparsificiation} is adaptable to other notions of sparsity. 

\subsubsection{Sparsifying Algorithms Obtained by Term Deletion}\label{subsec: section_two_special_case}

We close this subsection by considering the sparsification of the APA algorithms obtained from the search procedure discussed in \cref{section: search}. These algorithms have special structure:\ their parameter dependence comes from a De-Groote transformation applied to an exact decomposition. As a result, verified below, we can always construct alternative bases whose corresponding encoding matrices $U_{\phi}$, $V_{\psi}$, and $W_{\nu}$ are independent of $\varepsilon$. Moreover, this does not come at the expense of sparsity (again, as measured by $\text{weight}(\cdot)$).

\begin{prop}\label{prop:constant-core-sparsification}
Let $U(\varepsilon)$, $V(\varepsilon)$, and $W(\varepsilon)$ be obtained from an exact algorithm as in \cref{section: search}. Consider any invertible, symbolic alternative-basis transformations $\phi_{\varepsilon}$, $\psi_{\varepsilon}$, and $\nu_{\varepsilon}$ for which the encoding matrices
\[
U_{\phi}(\varepsilon)=\phi_{\varepsilon}^{-1}\cdot U(\varepsilon),\qquad
V_{\psi}(\varepsilon)=\psi_{\varepsilon}^{-1} \cdot V(\varepsilon),\qquad
W_{\nu}(\varepsilon) =\nu_{\varepsilon} \cdot W(\varepsilon)
\]
have Laurent-polynomial entries. The following hold:
\begin{enumerate}
\item There are equivalent alternative-basis transformations $\tilde\phi_{\varepsilon}$, $\tilde\psi_{\varepsilon}$, and $\tilde\nu_{\varepsilon}$ for which $U_{\tilde\phi}(\varepsilon)$, $V_{\tilde\psi}(\varepsilon)$, and $W_{\tilde\nu}(\varepsilon)$ are constant. The maps $\tilde\phi_{\varepsilon}^{-1}$, $\tilde\psi_{\varepsilon}^{-1}$, and $\tilde\nu_{\varepsilon}$ have Laurent-polynomial entries.
\item If the parameter $p$ in \eqref{eqn: weight_defn} satisfies $p\geq 1$, the weight of each constant encoding matrix is no greater than that of its original counterpart.
\end{enumerate}
\end{prop}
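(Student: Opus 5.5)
The plan is to exploit the fact that the $\varepsilon$-dependence of a term-deletion algorithm comes entirely from a De-Groote transformation. In that case the $\varepsilon$-dependence can be absorbed into the basis change, and a suitable constant "core" can be obtained by specializing $\varepsilon$ to a generic numerical value.

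\textbf{Step 1: factor out the De-Groote transformation.} By \cref{section: search}, the columns of $U(\varepsilon)$ are vectorizations of $R^{-1}U_jS$ for the surviving indices $j>l$, where the $U_j$ are constant. Row-wise vectorization satisfies $\mathrm{vec}(R^{-1}XS)=(R^{-1}\otimes S^T)\,\mathrm{vec}(X)$. Hence
\[
U(\varepsilon)=K_U(\varepsilon)\,U',
\]
where $U'$ is the constant matrix whose columns are $\mathrm{vec}(U_j)$, $j>l$, and $K_U=R^{-1}\otimes S^T$. In the same way $V(\varepsilon)=K_V(\varepsilon)V'$ with $K_V=S^{-1}\otimes T^T$, and $W(\varepsilon)=K_W(\varepsilon)W'$ with $K_W$ the Kronecker factor coming from $T^{-1}(\cdot)R$, suitably transposed for the row/column convention of $W$.

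We need $K_U,K_V,K_W$ and their inverses to have Laurent-polynomial entries. This holds for the transformations produced by \cref{thm: remove_one}, which are constant conjugations of diagonal matrices of powers of $\varepsilon$ (as in the $\langle 2,2,3\rangle$ example). This is the one place where I need the precise form of $R$, $S$, $T$ from \cref{section: search}. If that form were not available, I would restrict the statement to Laurent $R$, $S$, $T$ with Laurent inverses.

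\textbf{Step 2: specialize at a generic point.} Given $\phi_\varepsilon$, write
\[
U_\phi(\varepsilon)=G(\varepsilon)\,U', \qquad G=\phi_\varepsilon^{-1}K_U,
\]
where $G$ is invertible over ${\mathbb C}(\varepsilon)$. Choose $\varepsilon_0\in{\mathbb C}\setminus\{0\}$ avoiding the finitely many poles of the entries of $G$ and the finitely many zeros of $\det G$. Then $G(\varepsilon_0)$ is a well-defined, invertible constant matrix. Set
\[
\tilde\phi_\varepsilon^{-1}:=G(\varepsilon_0)\,K_U(\varepsilon)^{-1},
\]
which has Laurent entries by Step 1. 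Then $U_{\tilde\phi}=\tilde\phi_\varepsilon^{-1}U(\varepsilon)=G(\varepsilon_0)U'$ is constant. Define $\tilde\psi_\varepsilon^{-1}=G_V(\varepsilon_0)K_V^{-1}$ analogously. For $W$, since $W_\nu=\nu W$, put $\tilde\nu_\varepsilon=G_W(\varepsilon_0)K_W^{-1}$ with $G_W=\nu_\varepsilon K_W$.

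These transformations are "equivalent" in the required sense: by \eqref{eqn: alternative_basis_relation}, any factorization $U=\tilde\phi_\varepsilon U_{\tilde\phi}$ (and likewise for $V$ and $W$) computes the same APA bilinear map, so \cref{alg:alt_basis_APA} returns the same output.

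\textbf{Step 3: compare weights entrywise.} Each entry of $G(\varepsilon_0)U'$ is the evaluation at $\varepsilon_0$ of the corresponding entry of $U_\phi(\varepsilon)=G(\varepsilon)U'$. We compare contributions to \eqref{eqn: weight_matrix} entry by entry.
\begin{itemize}
\item An identically zero entry stays zero.
\item A constant entry keeps its value, so its contribution to nnz and nns is unchanged.
\item A variable entry contributes $1+p$ in $U_\phi$. After evaluation it becomes a constant, contributing at most $1+1=2$, which is at most $1+p$ when $p\ge 1$.
\end{itemize}
The column count in \eqref{eqn: weight_matrix}, or the row count for $W$, is unchanged, so the weight does not increase.

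I expect Step 1 to be the main obstacle, specifically the bookkeeping of the vectorization and transposition conventions for $W$, together with confirming that the De-Groote factors are Laurent with Laurent inverses. Steps 2 and 3 are routine genericity and counting arguments.
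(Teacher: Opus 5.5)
Your proposal is correct and follows the paper's proof essentially step for step. You factor $U(\varepsilon)=D_\varepsilon U$ with $D_\varepsilon=R^{-1}\otimes S^T$, set $G_\varepsilon=\phi_\varepsilon^{-1}D_\varepsilon$, and take $\tilde\phi_\varepsilon^{-1}=G_{\varepsilon_0}D_\varepsilon^{-1}$ at a generic nonzero $\varepsilon_0$, so that $U_{\tilde\phi}$ is the evaluation of $U_\phi(\varepsilon)$ at $\varepsilon_0$. Two of your points are ones the paper uses only tacitly: the entrywise count (a variable entry contributes $1+p\ge 2$, while its evaluation contributes at most $2$), and the requirement that $R$, $S$, $T$ be Laurent with Laurent inverses, which holds for the construction of \cref{thm: remove_one}.
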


\begin{proof}
We prove both claims for $U(\varepsilon)$. Let $U$ be the encoding matrix of the parent decomposition after the selected terms have been deleted. With the row-wise vectorization used here, set $D_{\varepsilon}=R^{-1}\otimes S^T$, where $R$ and $S$ are the De-Groote transformations from \cref{section: search}. Then $U(\varepsilon)=D_{\varepsilon}U$, and $D_{\varepsilon}$ and $D_{\varepsilon}^{-1}$ have Laurent-polynomial entries. Set
\begin{equation}
    G_{\varepsilon}=\phi_{\varepsilon}^{-1}D_{\varepsilon}.
\end{equation}
Because the coefficient field is infinite and $G_{\varepsilon}$ is invertible over ${\mathbb C}(\varepsilon)$, we can choose a nonzero $\varepsilon_0$ at which $G_{\varepsilon_0}$ is defined and invertible. Define
\begin{equation}
    \tilde\phi_{\varepsilon}^{-1}=G_{\varepsilon_0}D_{\varepsilon}^{-1}.
\end{equation}
This map has Laurent-polynomial entries, and
\begin{equation}
U_{\tilde\phi}(\varepsilon)
=\tilde\phi_{\varepsilon}^{-1}U(\varepsilon)
=G_{\varepsilon_0}U
\end{equation}
is constant. 
Moreover, $U_{\tilde\phi}(\varepsilon)$ is obtained by evaluating $U_{\phi}(\varepsilon)=G_{\varepsilon}U$ at $\varepsilon_0$. Hence, every constant entry retains its value, and every nonconstant entry becomes a constant. This implies that the weight cannot increase. 
For $V(\varepsilon)$ and $W(\varepsilon)$, the proof is the same, using $\psi_{\varepsilon}^{-1}$ and $\nu_{\varepsilon}$, respectively, as the maps that produce their encoding matrices.
\end{proof}

The benefit of sparsifying this way (when possible) is straightforward:\ if $U_{\phi}$, $V_{\psi}$, and $W_{\nu}$ have no $\varepsilon$ dependence, then we can perform the approximate-to-exact procedure of Bini \cite{Bini1980} by applying several basis transformations in parallel before evaluating the bilinear algorithm corresponding to $U_{\phi}$, $V_{\psi}$, and $W_{\nu}$ in a batched fashion.

\subsection{Results}\label{subsection: AB_results}

    To evaluate the arithmetic complexity of the resulting algorithms, we compute the leading coefficients using the framework of~\cite{alternative_bases}. For each sparsified algorithm, we first compute the base-case linear-operation counts $q_U$, $q_V$, and $q_W$ of its encoding and decoding matrices. In the APA setting, these counts are evaluated symbolically in $\varepsilon$, and we use $p=2$ for the weight calculation in~\eqref{eqn: weight_defn}. We then apply the recursion-order optimization of~\cite{alternative_bases}, which uses dynamic programming to evaluate recursive compositions of the algorithm and its permutations for square matrix multiplication. Among the resulting recursion orders, we report the minimum leading coefficient returned by this procedure. An example is given in \cref{ap: sparsification_example}. 

    The sparsification results recorded in \cref{tab:results_summary2} were obtained on a laptop with an Intel Core Ultra 9. 
    Most encoding matrices could be successfully sparsified after running \cref{alg: APA_sparsificiation} for less than an hour. Only three required a run time of more than a day:\ two completed in two days and the last in three. While, in general, larger matrices took longer to sparsify, this is not the only factor determining the run time of \cref{alg: APA_sparsificiation}. In fact, it seems that the complexity of the matrix entries matter more than the size of the matrix. 

\section{APA Matrix Multiplication is Stable}\label{section: stability}

In this section, we show that alternative basis APA matrix multiplication is stable by proving a (weak) forward error bound for a floating-point implementation of the approximate-to-exact procedure represented by \cref{alg:alt_basis_APA_exact}. In line three of this algorithm, \textbf{AltBasis-Recursive-APA}$_{\langle \phi, \psi, \nu \rangle}$ is a recursive implementation of an alternative basis APA matrix multiplication routine (i.e, \cref{alg:alt_basis_APA}), indexed by the corresponding symbolic basis transformations $\phi$, $\psi$, and $\nu$. Following the previous section, this algorithm is based on an alternative basis  $\langle m_0,k_0,n_0; t \rangle$ scheme defined by parameterized encoding matrices $U_{\phi}(\varepsilon)$, $V_{\psi}(\varepsilon)$ and $W_{\nu}(\varepsilon)$.  

\begin{algorithm}[h]
\caption{Approximate-to-Exact Alternative Basis APA Matrix Multiplication\\
\textbf{Input:} $A \in {\mathbb C}^{m \times k}$, $B \in {\mathbb C}^{k \times n}$ and parameters $ \varepsilon_1, \ldots,\varepsilon_p \in {\mathbb C}$, where $p = dL+1$. \\
\textbf{Output:} $C = AB$. }\label{alg:alt_basis_APA_exact}
\begin{algorithmic}[1]
\STATE{$[\alpha_1, \ldots, \alpha_p] = $ \textbf{Vandermonde-Solve}$(\varepsilon_1, \ldots, \varepsilon_p)$}
\FOR{$j = 1:p$}
    \STATE $C_j = $ \textbf{AltBasis-Recursive-APA}$_{\langle \phi, \psi, \nu \rangle}(A,B,\varepsilon_j)$
\ENDFOR
\STATE{$C = \alpha_1C_1 + \alpha_2C_2 + \cdots + \alpha_pC_p$}
\RETURN{$C$}
\end{algorithmic}
\end{algorithm}

\indent As discussed in the introduction, \cref{alg:alt_basis_APA_exact} obtains the exact product of $A$ and $B$ by evaluating \textbf{AltBasis-Recursive-APA}$_{\langle \phi, \psi, \nu \rangle}$ $p = dL+1$ times with different values of $\varepsilon$ and taking a linear combination of the output (after applying the corresponding basis transformation $\nu_{\varepsilon}$). The coefficients in this linear combination solve the Vandermonde system \eqref{eqn: vandermonde_system_d} with $p$ replacing $d$ 
and are computed in line 1 of \cref{alg:alt_basis_APA_exact} by the routine \textbf{Vandermonde-Solve}. Note that each $\alpha_j$ can be expressed in terms of $\varepsilon_1, \ldots, \varepsilon_p$ as follows:
\begin{equation}\label{eqn: vandermonde_solution}
   \alpha_j = (-1)^{p-1} \prod_{k \neq j} \frac{\varepsilon_k}{\varepsilon_j-\varepsilon_k} = \prod_{k\neq j} \frac{\varepsilon_k}{\varepsilon_k-\varepsilon_j}.
\end{equation}
We also have the special case $\alpha_j = 1/p$ for all $j$ when the $\varepsilon_i$ are the $p$-th roots of unity. \\
\indent To obtain a forward error bound for a floating-point implementation of \cref{alg:alt_basis_APA_exact} we extend work of Schwartz et al. \cite{Schwartz2026_journal} to the APA setting. The results of \cite{Schwartz2026_journal} are the most recent in a long line of research on the stability of fast matrix multiplication, which traces its roots back to work of Brent \cite{Brent70}  and Bini and Lotti \cite{Bini_Lotti}.
The normwise stability bounds derived by these authors for specific algorithms -- i.e., the same notion of weak forward stability established for APA algorithms in this paper (see \cref{thm: main_APA_error_bound}) -- was eventually shown to apply to the broad class of fast (exact) matrix multiplication routines by Demmel et al.\ \cite{fastMM}. The contribution of Schwartz et al.\  in \cite{Schwartz2026_journal}, itself a generalization of \cite{Ballard2016}, was to extend this framework to include alternative basis matrix multiplication  \cite{alternative_bases}, and hence their bounds are a natural fit for our setting in light of the previous section. To the best of our knowledge, the results presented here are the first stability bounds derived for APA matrix multiplication. 

\begin{remark} The bounds presented in this section are stated in terms of the maximum-entry norm -- i.e., $\|A\| = \max_{ij}|A(i,j)|$. This matches most of the results in the literature on the stability of fast matrix multiplication. Dumas, Pernet, and Sedoglavic \cite{DUMAS2026102524}, however, recently derived new bounds in terms of arbitrary norms, which are tighter than what can be obtained by simply applying norm equivalence. They demonstrate the efficacy of their bounds by using them to obtain algorithms that are more stable in practice (including with respect to the maximum entry norm). Since our goal in this section is to demonstrate stability for \cref{alg:alt_basis_APA_exact}, we leave an APA extension of these results to future work. Similarly, we do not consider scaling techniques (e.g., \cite{Dumitrescu1998}), even though this is a well-documented way to further improve the stability of fast exact matrix multiplication (see the discussion in \cite{grading_the_blas}). 
\end{remark}

\subsection{Preliminaries}
\indent We begin by summarizing the building blocks of our analysis. Throughout, we adopt a standard model for floating-point arithmetic \cite{Higham_book,Overton}, in which each binary floating-point operation $\circ \in \left\{ +,-,\times,\div \right\}$ satisfies
\begin{equation}\label{eqn: floating-point assumption}
    fl(x \circ y) = (x \circ y)(1+\delta) \; \;  \text{for some} \; \; |\delta| \leq {\bf u},
\end{equation}
where $fl(x \circ y)$ and $x \circ y$ are the floating-point and exact outputs, respectively, and ${\bf u}$ is the \textit{unit roundoff}. Every operation in \cref{alg:alt_basis_APA_exact}, including the evaluation of the symbolic encoding matrices and the application of the fast basis transformations, incurs this floating-point error. In addition, we assume that the coefficients $\alpha_1, \ldots, \alpha_p$ are computed to high \textit{relative} accuracy in line one  -- i.e., if $\widehat{\alpha}_1, \ldots,\widehat{\alpha}_p$ are the outputs of \textbf{Vandermonde-Solve}$(\varepsilon_1, \ldots, \varepsilon_p)$ in floating-point arithmetic and $\alpha_1, \ldots, \alpha_p$ are the corresponding exact quantities, then
    \begin{equation}\label{eqn: Vandermonde_error_assumption}
        \frac{|\widehat{\alpha}_j - \alpha_j|}{|\alpha_j|} \leq  \delta 
    \end{equation}
for some small $\delta > 0$. 
\cref{prop: evaluate_vandermonde_directly} demonstrates that this holds for $\delta = O(\mathbf{u})$, under some mild conditions, if the $\alpha_j$ are obtained by evaluating \eqref{eqn: vandermonde_solution} directly. Of course, this is not the only way to handle the first step of \cref{alg:alt_basis_APA_exact} (see for example \cite{Higham_vandermonde,Higham_book}) though it is sufficient for our purposes. \\  
\indent Ultimately, the performance of \cref{alg:alt_basis_APA_exact} does not depend critically on how the Vandermonde system is solved. After all, the coefficients $\alpha_1, \ldots, \alpha_p$ could be computed ahead of time in higher precision if needed. Nevertheless, \cref{prop: evaluate_vandermonde_directly} foreshadows how the stability of alternative basis APA matrix multiplication depends on the choice of $\varepsilon_1, \ldots, \varepsilon_p$.  In the worst cases -- for example, when the $\varepsilon_i$ are large/small or tightly clustered -- it will be difficult to avoid under/overflow when evaluating \eqref{eqn: vandermonde_solution}. On the flip side, if the $\varepsilon_i$ are roots of unity then we can skip \textbf{Vandermonde-Solve} altogether, as the corresponding coefficients $\alpha_j = 1/p$ are known exactly. 

\indent The final ingredient of our analysis is an error bound of Schwartz et al.\ \cite{Schwartz2026_journal}, which covers a single evaluation of an alternative basis APA matrix multiplication routine (that is, \cref{alg:alt_basis_APA}). To state this result, we need the following definitions. Note that \cref{defn:stability_factor} and \cref{defn:stability_prefactor} generalize \cite[Definition 3.4]{Schwartz2026_journal} and \cite[Definition 3.2]{Schwartz2026_journal}, respectively.

\begin{defn}\label{defn:star_operation}
    The \textit{error accumulation operator} $\star$ takes as input a vector $ v \in {\mathbb C}^m$ and a matrix $M \in {\mathbb C}^{m \times n}$ and outputs $v \star M \in {\mathbb C}^n$ satisfying 
    \[v \star M = \begin{pmatrix}
        \text{nnz}(M(:,1) ) + \|v \odot {\mathbb I}(M(:,1)) \|_{\infty}\\
        \text{nnz}(M(:,2)) +  \|v \odot {\mathbb I}(M(:,2))\|_{\infty}\\
        \vdots \\
        \text{nnz}(M(:,n))+
       \| v \odot {\mathbb I}(M(:,n)) \|_{\infty}
    \end{pmatrix}, \]
    where $\odot$ is the Hadamard product and where  $\text{nnz}(w) \in {\mathbb N}$ and ${\mathbb I}(w) \in {\mathbb R}^n$ are, respectively, the number of nonzero entries in the vector $w \in {\mathbb C}^n$ and a vector that records whether each entry of $w$ is nonzero -- that is, the $i$-th entry of ${\mathbb I}(w)$ is one if $w_i$ is nonzero and zero otherwise. Following \cite{Schwartz2026_journal}, we also use the shorthand notation $\star M \coloneqq 0 \star M$ for  $0$ the zero vector. 
\end{defn}

\begin{defn}\label{defn:stability_factor}
    Let $U_{\phi}(\varepsilon)$, $V_{\psi}(\varepsilon)$, and $W_{\nu}(\varepsilon)$ be the encoding matrices of an alternative basis APA matrix multiplication algorithm with associated basis transformations $\phi_{\varepsilon}$, $\psi_{\varepsilon}$, and $\nu_{\varepsilon}^{-1}$. Define the vectors $a_{\varepsilon}$ and $b_{\varepsilon}$ as follows:
    \[a_{\varepsilon} = \left(|\phi_{\varepsilon}| \cdot |U_{\phi}(\varepsilon)|\right)^T \mathds{1} \; \; \; \; \text{and} \; \; \; \; \; b_{\varepsilon}= \left(|\psi_{\varepsilon}| \cdot |V_{\psi}(\varepsilon)|\right)^T \mathds{1}, \]
    where $\mathds{1}$ is a vector of all ones (of the appropriate dimension). The algorithm's \textit{stability vector} $e_{\varepsilon}$ is then 
    \[ e_{\varepsilon} = \left(|\nu_{\varepsilon}^{-1}| \cdot |W_{\nu}(\varepsilon)| \right) \left(a_{\varepsilon} \odot b_{\varepsilon}\right)\]
    where $\odot$ represents the Hadamard product. The corresponding \textit{stability factor} is $E(\varepsilon) = \|e_{\varepsilon}\|_{\infty}$.
\end{defn}

\begin{defn}\label{defn:stability_prefactor}
    Let $U_{\phi}(\varepsilon)$, $V_{\psi}(\varepsilon)$, and $W_{\nu}(\varepsilon)$ be the encoding matrices of an alternative basis APA matrix multiplication algorithm with associated basis transformations $\phi_{\varepsilon}$, $\psi_{\varepsilon}$, and $\nu_{\varepsilon}^{-1}$. Define the vector
    \[ q_{\varepsilon} = \left( \left( \star \phi_{\varepsilon} \star U_{\phi}(\varepsilon) \right) + \left( \star \psi_{\varepsilon} \star V_{\psi}(\varepsilon)\right) \right) \star W_{\nu}(\varepsilon)^T \star \nu_{\varepsilon}^{-T}.\]
     The \textit{stability prefactor} of the algorithm is then $Q(\varepsilon) = \|q_{\varepsilon}\|_{\infty}$.
\end{defn}

Note that the stability factor $E(\varepsilon)$ can be used to bound the norm of the output of a recursive APA algorithm. Indeed, if $C$ is the output of \cref{alg:alt_basis_APA} (which defaults to exact matrix multiplication after $L$ recursive steps) in exact arithmetic, then 
\begin{equation}\label{eqn: output_norm_bound}
    \|C\| \leq (k/k_0^L)E(\varepsilon)^L\|A\|\|B\|
\end{equation}
for $\|\cdot \|$ the maximum-entry norm. This follows from \eqref{eqn: alternative_basis_relation}, applied recursively, and was similarly observed by Schwartz et al.\ in the proof of \cite[Theorem 4.5]{Schwartz2026_journal}.\\
\indent The focus of \cite{Schwartz2026_journal} is  alternative basis \textit{exact} matrix multiplication (in the same model of floating-point arithmetic and using analogous definitions of stability factor/prefactor). Since their analysis does not assume that the bilinear form being evaluated is exact matrix multiplication, their bound immediately extends to the APA setting. Only one modification is necessary:\ the encoding matrices, basis transformations, and stability factor/prefactor now depend on the evaluation parameter $\varepsilon$. We state a modified version of their main result below. 

\begin{thm}[Schwartz et al.\ \cite{Schwartz2026_journal}] \label{thm: single_evaluation_bound}
Let $A \in {\mathbb C}^{m \times k}$ and $B \in {\mathbb C}^{k \times n}$ and suppose that $\widehat{C}$ and $C$ are the floating-point and exact arithmetic outputs of a recursive, alternative basis APA matrix multiplication algorithm (i.e., \cref{alg:alt_basis_APA}) evaluated at $\varepsilon \in {\mathbb C}$. If the base case of this algorithm multiplies $m_0 \times k_0$ and $k_0 \times n_0$ matrices, then we have
\[ \|\widehat{C} - C\| \leq \left( k/k_0^L + Q(\varepsilon)L\right) \left(k/k_0^L \right) E(\varepsilon)^L\|A\|\|B\| \text{\normalfont \bf u} + O(\text{\normalfont \bf u}^2),\]
where $L$ is the required number of recursive steps, $\|\cdot\|$ is the maximum entry norm, and $E(\varepsilon)$ and $Q(\varepsilon)$ are the stability factor and prefactor of the algorithm, respectively.     
\end{thm}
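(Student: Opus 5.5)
The plan is to transplant the proof of the exact-case bound of Schwartz et al.\ \cite{Schwartz2026_journal} essentially verbatim, fixing the evaluation parameter $\varepsilon$ throughout. Once $\varepsilon$ is a fixed complex number, $U_{\phi}(\varepsilon)$, $V_{\psi}(\varepsilon)$, $W_{\nu}(\varepsilon)$, $\phi_{\varepsilon}$, $\psi_{\varepsilon}$, and $\nu_{\varepsilon}^{-1}$ are ordinary constant matrices. The algorithm is then a recursive alternative-basis bilinear algorithm of exactly the type analyzed there. The only place the original argument used that the bilinear form computes $AB$ exactly is in bounding the size of exact intermediate quantities. That role is instead played by \eqref{eqn: output_norm_bound}, i.e.\ the stability factor $E(\varepsilon)$. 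So the proof reduces to checking that each step of their induction uses only the encoding/basis matrices and never the identity $W(U^Ta\odot V^Tb)=c$.

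\textbf{Step 1: one level of recursion.} Apply the floating-point model \eqref{eqn: floating-point assumption} to a single level.
\begin{itemize}
\item[(i)] Apply $\phi_{\varepsilon}$ and $\psi_{\varepsilon}$, computing linear combinations. By the definition of $\star$, each output entry of $\phi_\varepsilon(A)$ carries relative error accumulated along at most $\star\phi_\varepsilon$ additions.
\item[(ii)] Form the $t$ encoded operands $U_{\phi}(\varepsilon)^T\tilde a$ and $V_{\psi}(\varepsilon)^T\tilde b$. Their accumulated rounding depth is $\star\phi_\varepsilon\star U_\phi(\varepsilon)$ and $\star\psi_\varepsilon\star V_\psi(\varepsilon)$.
\item[(iii)] Perform the $t$ recursive products.
\item[(iv)] Decode with $W_{\nu}(\varepsilon)$ and then $\nu_{\varepsilon}^{-1}$, adding $\star W_\nu(\varepsilon)^T\star\nu_\varepsilon^{-T}$.
\end{itemize}
Summing the contributions from the two operands gives exactly the vector $q_\varepsilon$, whose sup norm is $Q(\varepsilon)$. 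Meanwhile, the magnitudes of the exact intermediate quantities are bounded entrywise by $|\phi_\varepsilon||U_\phi(\varepsilon)|$ and its analogues, so the absolute error amplification per level is governed by $e_\varepsilon$, i.e.\ by $E(\varepsilon)$.

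\textbf{Step 2: induction on $L$.} Take as hypothesis the claimed bound for $L-1$ levels, with the base case being classical multiplication of $(m/m_0^L)\times(k/k_0^L)$ by $(k/k_0^L)\times(n/n_0^L)$ blocks. That base case has error at most $(k/k_0^L)^2\|A\|\|B\|\mathbf u+O(\mathbf u^2)$, which is the first term of the bound when $L=0$. In the inductive step, the error of each subproduct has two sources. The first is the propagated perturbation in its inputs, which are relatively perturbed by $O(Q(\varepsilon)\mathbf u)$; multiplying exactly yields $Q(\varepsilon)\mathbf u$ times the exact subproduct size. The second is the recursive error itself. Both are then pushed through the decoding. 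Using \eqref{eqn: output_norm_bound} for the exact subproblem sizes at each level, each level contributes $Q(\varepsilon)(k/k_0^L)E(\varepsilon)^L\|A\|\|B\|\mathbf u$. Summing over the $L$ levels gives the $Q(\varepsilon)L$ term, and the factor $E(\varepsilon)^L$ tracks the multiplication of absolute bounds through each encode/decode.

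\textbf{Main obstacle.} The delicate step is verifying that the bookkeeping of \cite{Schwartz2026_journal} yields an additive $Q(\varepsilon)L$ rather than a multiplicative $Q(\varepsilon)^L$. The key fact is that relative errors introduced at one level are bounded linearly and then amplified only through $E(\varepsilon)$, not compounded with earlier prefactors, apart from $O(\mathbf u^2)$ cross terms. I would handle this exactly as in their proof: track errors entrywise with the $\star$ accounting and absorb all products of two or more $\delta$'s into $O(\mathbf u^2)$. This remains legitimate for each fixed $\varepsilon$, since the hidden constants depend on $\varepsilon$ but $\mathbf u\to 0$ with $\varepsilon$ held fixed.
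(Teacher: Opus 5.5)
Your proposal is correct and is essentially the paper's argument: the paper gives no independent proof, but fixes $\varepsilon$ so that the encoding matrices and basis transformations become constant matrices and then invokes the analysis of Schwartz et al.\ \cite{Schwartz2026_journal} verbatim, since that analysis never uses that the bilinear form computes $AB$ exactly and controls intermediate quantities through the stability-factor bound \eqref{eqn: output_norm_bound}, as in your sketch. One small difference: the paper asserts that the original proof already bounds intermediate quantities via \eqref{eqn: output_norm_bound} rather than via exactness, so the ``replacement'' step you describe is unnecessary, though harmless.
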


\subsection{Main Error Bound}

We now prove the main result of this section, which exhibits (weak) forward stability for \cref{alg:alt_basis_APA_exact} under some mild assumptions on the inputs and the two subroutines \textbf{Vandermonde-Solve} and \textbf{AltBasis-Recursive-APA}$_{\langle \phi, \psi, \nu \rangle}$. The main insight of the proof is that \cref{thm: single_evaluation_bound} can be used to bound the error in a single evaluation of \textbf{AltBasis-Recursive-APA}$_{\langle \phi, \psi, \nu \rangle}$ and moreover that this implies stability for \cref{alg:alt_basis_APA_exact} when combined with the following observations:
\begin{itemize}
    \item In the APA setting, the encoding matrices and basis transformations have functional entries, which must be evaluated at each evaluation point $\varepsilon_j$. Nevertheless, if these entries are computed to high relative accuracy then only minor additional error is introduced (see \cref{prop: evaluating an approximate bilinear algorithm}).
    \item The sum reduction in line 5 is similarly unproblematic if \textbf{Vandermonde-Solve} satisfies \eqref{eqn: Vandermonde_error_assumption}. 
\end{itemize}
Accordingly, \cref{thm: main_APA_error_bound} is stated in terms of two parameters $\delta_1$ and $\delta_2$, which denote how accurately (in terms of relative error) the solution to the Vandermonde system and the entries of the encoding matrices/basis transformations are computed. We choose to leave these as tunable parameters since, in practice, both the $\alpha_j$ and the entries of $U_{\phi}(\varepsilon_j)$, $V_{\psi}(\varepsilon_j)$, $W_{\nu}(\varepsilon_j)$, $\phi_{\varepsilon_j}$, $\psi_{\varepsilon_j}$ and $\nu_{\varepsilon_j}^{-1}$ can be computed offline to whatever precision is desired (without changing the asymptotic cost of \cref{alg:alt_basis_APA_exact}). Recall that \cref{prop: evaluate_vandermonde_directly} implies that $\delta_1$ can be replaced with $3p\mathbf{u} + O(\mathbf{u^2})$ if  we simply evaluate \eqref{eqn: vandermonde_solution} directly.

\begin{thm}\label{thm: main_APA_error_bound}
    Let $A \in {\mathbb C}^{m \times k}$ and $B \in {\mathbb C}^{k \times n}$ with $C = AB$. Suppose that  $\widehat{C}$ is the output of a floating-point implementation of \cref{alg:alt_basis_APA_exact} under the following conditions:
    \begin{enumerate}
        \item Both the entries of $A$ and $B$ and the evaluation parameters $\varepsilon_1, \ldots, \varepsilon_p \in {\mathbb C}$ are normalized floating-point numbers. In addition, the $\varepsilon_i$ are nonzero with $\varepsilon_i \neq \varepsilon_j$ if $i \neq j$. 
        \item \text{\normalfont \bf Vandermonde-Solve} satisfies the relative error bound \eqref{eqn: Vandermonde_error_assumption} for some small constant $\delta_1 > 0$. 
        \item \text{\normalfont \bf AltBasis-Recursive-APA}$_{\langle \phi,\psi,\nu \rangle}$ implements a recursive, alternative basis APA matrix multiplication algorithm, defined by encoding matrices $U_{\phi}(\varepsilon)$, $V_{\psi}(\varepsilon)$, $W_{\nu}(\varepsilon)$ and symbolic basis transformations $\phi_{\varepsilon}$, $\psi_{\varepsilon}$, $\nu_{\varepsilon}^{-1}$. The base case of this algorithm, which is reached after $L \in {\mathbb N}$ recursive steps for 
        \[ L \leq \min \left\{ \log_{m_0}(m), \log_{k_0}(k), \log_{n_0}(n) \right\}, \]
        defaults to classical matrix multiplication. We additionally assume that $m_0^L$, $k_0^L$, and $n_0^L$ divide $m$, $k$, and $n$, respectively.
        \item For each $\varepsilon_i$, the nonzero entries of $U_{\phi}(\varepsilon_i)$, $V_{\psi}(\varepsilon_i)$, $W_{\nu}(\varepsilon_i)$, $\phi_{\varepsilon_i}$, $\psi_{\varepsilon_i}$, and $\nu_{\varepsilon_i}^{-1}$ belong to the normalized range of the floating-point system and are computed to relative accuracy $\delta_2$ for some $0 < \delta_2 < 1$. 
        \item The sum reduction in line 5 is performed (entry-wise) via the recursive, pairwise algorithm for inner products (see \cite[Section 3.1]{Higham_book}). 
        \item No over/underflow occurs during the computation. 
    \end{enumerate}
    Then we have 
    \[ \|\widehat{C}-C\| \leq  (k/k_0^L)\left[\sum_{j=1}^p \left(\prod_{i \neq j} \frac{|\varepsilon_i|}{|\varepsilon_i-\varepsilon_j|} \right) \left( g(\delta_1,\delta_2)f(\varepsilon_j)\mathbf{u} + \left( g(\delta_1,\delta_2)-1 \right)\right) E(\varepsilon_j)^L \right] \|A\| \|B\| +O(\text{\normalfont \bf u}^2)\]
    for 
    \[ f(\varepsilon_j) =  (k/k_0^L) + Q(\varepsilon_j)L  +  \lceil \log_2p \rceil +1 \quad \text{and} \quad g(\delta_1,\delta_2) = (1+\delta_2)^{6L}(1+\delta_1),  \]
    where $\| \cdot \|$ is the maximum-entry norm, $\mathbf{u}$ is the unit roundoff,  and $E(\varepsilon)$ and $Q(\varepsilon)$ are the stability factor and prefactor, respectively, of \text{\normalfont \bf AltBasis-Recursive-APA}$_{\langle \phi,\psi,\nu \rangle}$. 
\end{thm}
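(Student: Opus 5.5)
The plan is to split the total error into three sources and bound each one using the results already available. Write $C_j$ for the exact-arithmetic output of \textbf{AltBasis-Recursive-APA}$_{\langle \phi,\psi,\nu\rangle}(A,B,\varepsilon_j)$ and $\widehat{C}_j$ for its floating-point output, with the encoding matrices and basis transformations evaluated only to relative accuracy $\delta_2$. Write $\widehat{\alpha}_j$ for the computed Vandermonde coefficients. Since $\sum_j \alpha_j C_j = C$ exactly by \eqref{eqn: exactify}, we have
\[ \widehat{C} - C = \Big(fl\big(\textstyle\sum_j \widehat{\alpha}_j \widehat{C}_j\big) - \sum_j \widehat{\alpha}_j\widehat{C}_j\Big) + \sum_j (\widehat{\alpha}_j - \alpha_j)\widehat{C}_j + \sum_j \alpha_j(\widehat{C}_j - C_j). \]
Each term is then bounded entrywise in the maximum-entry norm. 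Throughout, $|\alpha_j| = \prod_{i\neq j}|\varepsilon_i|/|\varepsilon_i-\varepsilon_j|$ by \eqref{eqn: vandermonde_solution}.

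The first step handles the perturbed functional entries. Following the observation attributed to \cref{prop: evaluating an approximate bilinear algorithm}, I would view the algorithm run with perturbed entries as an exact bilinear algorithm whose encoding matrices and basis transformations have entries $x(1+\eta)$ with $|\eta|\le\delta_2$. A single recursion level passes through six such matrices ($\phi,U_\phi,\psi,V_\psi,W_\nu,\nu^{-1}$), so over $L$ levels each product term picks up a factor of at most $(1+\delta_2)^{6L}$. This gives two consequences. First, the stability factor of the perturbed algorithm is at most $(1+\delta_2)^{6L}E(\varepsilon_j)$ in the relevant $L$-th power sense. Second, the exact perturbed output $\tilde C_j$ satisfies $\|\tilde C_j - C_j\| \le ((1+\delta_2)^{6L}-1)(k/k_0^L)E(\varepsilon_j)^L\|A\|\|B\|$, which comes from expanding the recursive bilinear form entrywise in absolute value, as in \eqref{eqn: output_norm_bound}. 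Applying \cref{thm: single_evaluation_bound} to the perturbed algorithm, whose sparsity pattern and hence prefactor $Q$ are unchanged, gives
\[ \|\widehat{C}_j - \tilde C_j\| \le (1+\delta_2)^{6L}\big(k/k_0^L + Q(\varepsilon_j)L\big)(k/k_0^L)E(\varepsilon_j)^L\|A\|\|B\|\mathbf{u} + O(\mathbf{u}^2). \]

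The second step handles the Vandermonde coefficients and the final sum. Using \eqref{eqn: Vandermonde_error_assumption} with $\delta_1$, we get $|\widehat\alpha_j|\le(1+\delta_1)|\alpha_j|$ and $|\widehat\alpha_j-\alpha_j|\le\delta_1|\alpha_j|$. For the reduction in line 5, the pairwise summation analysis of \cite[Section 3.1]{Higham_book} applied to a length-$p$ inner product, with one rounding per product, gives an error of at most $(\lceil\log_2 p\rceil+1)\mathbf{u}\sum_j|\widehat\alpha_j||\widehat C_j| + O(\mathbf{u}^2)$, and $|\widehat C_j|$ is bounded to first order by $(1+\delta_2)^{6L}(k/k_0^L)E(\varepsilon_j)^L\|A\|\|B\|$.

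Collecting terms, each index $j$ contributes $|\alpha_j|(k/k_0^L)E(\varepsilon_j)^L\|A\|\|B\|$ times a bracket. The $\mathbf{u}$-dependent parts combine to $g\,f(\varepsilon_j)\mathbf{u}$ after bounding $(1+\delta_2)^{6L}\le g$ and $(1+\delta_1)(1+\delta_2)^{6L}=g$. The $\mathbf{u}$-free parts are $\delta_1(1+\delta_2)^{6L} + ((1+\delta_2)^{6L}-1)$, which equals exactly $g-1$. This is the claimed bound.

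I expect the main obstacle to be the first step: making rigorous that $\delta_2$-perturbations in all six matrices at every recursion level inflate both the output and the floating-point error by at most $(1+\delta_2)^{6L}$, without disturbing the structure that \cref{thm: single_evaluation_bound} relies on. I would handle this by an induction on $L$ that mirrors the proof of \eqref{eqn: output_norm_bound}, tracking absolute values.
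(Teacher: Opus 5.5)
Your proposal is correct and follows essentially the same route as the paper. It uses the same triangle-inequality split through the perturbed exact outputs $\widetilde{C}_j$, \cref{thm: single_evaluation_bound} with $\widetilde{E}(\varepsilon_j)\le(1+\delta_2)^6E(\varepsilon_j)$ and $\widetilde{Q}=Q$, \cref{prop: evaluating an approximate bilinear algorithm} for $\|\widetilde{C}_j-C_j\|$, and the pairwise-summation bound for line 5.

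One bookkeeping point: in $\sum_j(\widehat{\alpha}_j-\alpha_j)\widehat{C}_j$, keep the cross term $\delta_1\|\widehat{C}_j-\widetilde{C}_j\|$ rather than dropping it via the first-order bound on $\|\widehat{C}_j\|$, since it is $O(\delta_1\mathbf{u})$ and not $O(\mathbf{u}^2)$. Adding it to the $(1+\delta_2)^{6L}(k/k_0^L+Q(\varepsilon_j)L)\mathbf{u}$ term gives exactly $g(\delta_1,\delta_2)(k/k_0^L+Q(\varepsilon_j)L)\mathbf{u}$, so no separate bound $(1+\delta_2)^{6L}\le g$ is needed.
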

\begin{proof}
    Let $\widehat{\alpha}_1, \ldots, \widehat{\alpha}_p \in {\mathbb C}$ and $\widehat{C}_j \in {\mathbb C}^{m \times n}$ be the floating-point output of \textbf{Vandermonde-Solve}$(\varepsilon_1,\ldots,\varepsilon_p)$ and \textbf{AltBasis-Recursive-APA}$_{\langle \phi, \psi, \nu \rangle}(A,B, \varepsilon_j)$, respectively. The latter corresponds to encoding matrices $\widetilde{U}_{\phi}(\varepsilon_j)$, $\widetilde{V}_{\psi}(\varepsilon_j)$, $\widetilde{W}_{\nu}(\varepsilon_j)$ and basis transformations $\tilde{\phi}_{\varepsilon_j}$, $\tilde{\psi}_{\varepsilon_j}$, and $\tilde{\nu}_{\varepsilon_j}^{-1}$, which can be obtained from $U_{\phi}(\varepsilon_j)$, $V_{\psi}(\varepsilon_j)$, $W_{\nu}(\varepsilon_j)$, $\phi_{\varepsilon_j}$, $\psi_{\varepsilon_j}$, and $\nu_{\varepsilon_j}^{-1}$ by applying entry-wise relative perturbations of size at most $\delta_2$. If $\sum_{j=1}^p \widehat{\alpha}_j \widehat{C}_j$ is the \textit{exact} linear combination of the floating-point quantities $\widehat{\alpha}_j$ and $\widehat{C}_j$, and  if $\widetilde{C}_j$ is the exact-arithmetic output of the recursive, alternative basis bilinear algorithm corresponding to $\widetilde{U}_{\phi}(\varepsilon_j)$, $\widetilde{V}_{\psi}(\varepsilon_j)$, $\widetilde{W}_{\nu}(\varepsilon_j)$, $\tilde{\phi}_{\varepsilon_j}$, $\tilde{\psi}_{\varepsilon_j}$, and $\tilde{\nu}_{\varepsilon_j}^{-1}$, then we have
    \begin{equation} \label{eqn: bound_one_triangle}
        \aligned 
        \|\widehat{C}-C \| &= \left\|\widehat{C} - \sum_{j=1}^p \widehat{\alpha}_j\widehat{C}_j + \sum_{j=1}^p \widehat{\alpha}_j\widehat{C}_j - \sum_{j=1}^p \alpha_j C_j\right\| \leq \left\| \widehat{C} - \sum_{j=1}^p \widehat{\alpha}_j \widehat{C}_j \right\| + \sum_{j=1}^p \|\widehat{\alpha}_j \widehat{C}_j - \alpha_j C_j \| ,
        \endaligned
\end{equation}
where the second term can be expanded as follows:
\begin{equation}\label{eqn: apa_expansion}
    \aligned
     \sum_{j = 1}^p \|\widehat{\alpha}_j \widehat{C}_j - \alpha_j C_j\| & =  \sum_{j=1}^p \|\widehat{\alpha}_j \widehat{C}_j -  \widehat{\alpha}_j \widetilde{C}_j + \widehat{\alpha}_j \widetilde{C}_j  - \alpha_j \widetilde{C}_j + \alpha_j \widetilde{C}_j - \alpha_j C_j \| \\
    & \leq   \sum_{j=1}^p \left( |\widehat{\alpha}_j| \|\widehat{C}_j - \widetilde{C}_j\| + |\widehat{\alpha}_j - \alpha_j| \|\widetilde{C}_j\|  + |\alpha_j| \|\widetilde{C}_j - C_j \| \right) \\
    & \leq  \sum_{j = 1}^p 
    \left( |\alpha_j| \left(  \|\widehat{C}_j - \widetilde{C}_j \| +\|\widetilde{C}_j - C_j \| \right) + |\widehat{\alpha}_j-\alpha_j| \left( \|\widetilde{C}_j\| + \|\widehat{C}_j - \widetilde{C}_j \| \right) \right) \\
    & \leq \sum_{j = 1}^p 
    \left( |\alpha_j| \left(  \|\widehat{C}_j - \widetilde{C}_j \| +\|\widetilde{C}_j - C_j \| + \delta_1 \left( \|\widetilde{C}_j\| + \|\widehat{C}_j - \widetilde{C}_j \| \right) \right) \right) \\
    & =  \sum_{j = 1}^p 
    \left( |\alpha_j| \left(  (1+\delta_1)\|\widehat{C}_j - \widetilde{C}_j \| +\|\widetilde{C}_j - C_j \| + \delta_1 \|\widetilde{C}_j\| \right) \right).
    \endaligned 
\end{equation}
To similarly bound the first term of \eqref{eqn: bound_one_triangle}, we note that each entry of $\widehat{C}$ can be interpreted as a floating-point inner product between the vector $\widehat{\alpha} \in {\mathbb C}^p$ of approximate coefficients and a vector of corresponding entries of $\widehat{C}_1, \ldots, \widehat{C}_p$. Since we assume that the recursive, pairwise algorithm is used to compute this inner product, we have 
\begin{equation} \label{eqn: triangle_part_two}
    \left\|\widehat{C} - \sum_{j=1}^p \widehat{\alpha}_j\widehat{C}_j \right\| \leq (\lceil \log_2p\rceil + 1) \left(\sum_{j=1}^p|\widehat{\alpha}_j| \|\widehat{C}_j\|\right) \textbf{u}  + O(\textbf{u}^2),
\end{equation}
where we have for all $1 \leq j \leq p$
\begin{equation}\label{eqn: term_one_bound}
    \aligned
    |\widehat{\alpha}_j|\|\widehat{C}_j\| & \leq \left( |\alpha_j| + |\widehat{\alpha}_j - \alpha_j| \right) \left( \|\widetilde{C}_j\| + \|\widehat{C}_j - \widetilde{C}_j\| \right) \leq  |\alpha_j| (1+\delta_1) \left( \|\widetilde{C}_j \| + \|\widehat{C}_j - \widetilde{C}_j \|  \right).
    \endaligned 
\end{equation}
Combining \eqref{eqn: apa_expansion} with \eqref{eqn: triangle_part_two} and \eqref{eqn: term_one_bound} yields 
\begin{equation}\label{eqn: putting_together}
    \resizebox{.9\textwidth}{!}{
    $\displaystyle
    \|\widehat{C} - C  \| \leq \sum_{j=1}^p |\alpha_j| \left( (1+\delta_1)\left( (\lceil \log_2p \rceil+1) \mathbf{u} + 1\right)  \|\widehat{C}_j - \widetilde{C}_j \|+ \left((1+\delta_1)(\lceil \log_2p \rceil + 1)\mathbf{u} + \delta_1 \right) \| \widetilde{C}_j \| + \|\widetilde{C}_j - C_j \|\right) + O(\mathbf{u}^2).
    $}
\end{equation}
Now \cref{thm: single_evaluation_bound} guarantees
\begin{equation}
    \| \widehat{C}_j-\widetilde{C}_j\| \leq (k/k_0^L+\widetilde{Q}(\varepsilon_j)L)(k/k_0^L)\widetilde{E}(\varepsilon_j)^L \|A\|\|B\| \textbf{u} + O(\textbf{u}^2)
\end{equation}
for $\widetilde{E}(\varepsilon_j)$ and $\widetilde{Q}(\varepsilon_j)$  the stability factor and prefactor corresponding to $\widetilde{U}_{\phi}(\varepsilon_j)$, $\widetilde{V}_{\psi}(\varepsilon_j)$, $\widetilde{W}_{\nu}(\varepsilon_j)$, $\tilde{\phi}$, $\tilde{\psi}$, and $\tilde{\nu}^{-1}$. Moreover, recalling \eqref{eqn: output_norm_bound}, $\|\widetilde{C}_j\| \leq (k/k_0^L)\widetilde{E}(\varepsilon_j)^L \|A\|\|B\| $. Finally, \cref{prop: evaluating an approximate bilinear algorithm} implies
\begin{equation}
    \|\widetilde{C}_j - C_j\| \leq \left( (1+\delta_2)^{6L} - 1 \right) (k/k_0^L)E(\varepsilon_j)^L \|A\|\|B\|.
\end{equation}
We complete the proof by applying these to  \eqref{eqn: putting_together} and simplifying, inserting the definition of $\alpha_j$, and noting $\widetilde{E}(\varepsilon_j) \leq (1+\delta_2)^6 E(\varepsilon_j)$ and $\widetilde{Q}(\varepsilon_j) = Q(\varepsilon_j)$ (since entry-wise relative perturbations of size $0 < \delta_2 < 1$ preserve the sparsity structure of the encoding matrices/basis transformations).
\end{proof}

\subsection{Parameter Selection for Improved Stability}

\cref{thm: main_APA_error_bound} suggests that, from the point of view of stability, a good choice for the evaluation parameters $\varepsilon_1,\ldots, \varepsilon_p$ is the $p$-th roots of unity -- i.e., $\varepsilon_j = e^{2 \pi i j/p}$. In this case, the factor $\prod_{i \neq j} \frac{|\varepsilon_i|}{|\varepsilon_i-\varepsilon_j|}$ simplifies to  $1/p$, and we do not have to worry about $E(\varepsilon_j)$ blowing up due to large positive/negative powers of $\varepsilon$ in the encoding matrices or basis transformations.  \\
\indent There is, however, one catch:\ the roots of unity, with the exception of $\pm 1$ and $\pm i$, are \textit{not} floating-point numbers, and hence \cref{thm: main_APA_error_bound} does not apply (in general) when $\varepsilon_j = e^{2 \pi i j/p}$. Instead, the best we can do is use the rounded roots of unity $\tilde{\varepsilon}_1, \ldots, \tilde{\varepsilon}_p$, which take the form $\tilde{\varepsilon}_j = e^{2 \pi i j/p} \cdot (1+\delta_j)$ for some $|\delta_j| \leq \textbf{u}$. Importantly, for $p$ not too large, these approximate roots satisfy $\prod_{i \neq j} \frac{|\tilde{\varepsilon}_i|}{|\tilde{\varepsilon}_i - \tilde{\varepsilon}_j|} = 1/p + O(\textbf{u})$. In particular, applying \cref{prop: vandermonde_perturbation} with $\delta = \textbf{u}$ and since the spacing between the $p$-th roots of unity is $2\sin(\pi/p)$, we have the bound
\begin{equation}
    \frac{| \tilde{\alpha}_j-1/p|}{1/p} \leq \frac{(p-1)\textbf{u}}{\sin(\pi/p)-p\textbf{u}} 
\end{equation}
for $\tilde{\alpha}_1, \ldots, \tilde{\alpha}_p$ the coefficients that solve the Vandermonde system corresponding to $\tilde{\varepsilon}_1, \ldots, \tilde{\varepsilon}_p$. \\
\indent This implies the following corollary of \cref{thm: main_APA_error_bound}, which assumes (1) that rounded roots of unity are used as evaluation points in \cref{alg:alt_basis_APA_exact}, and (2) that $\tilde{\alpha}_1, \ldots, \tilde{\alpha}_p$ and the entries of the encoding matrices/basis transformations at $\tilde{\varepsilon}_1, \ldots, \tilde{\varepsilon}_p$ are computed to high enough relative accuracy that $g(\delta_1,\delta_2) \leq 1 + cp \mathbf{u} + O(\mathbf{u}^2)$ for some small constant $c > 0$. This specific bound is motivated by \cref{prop: evaluate_vandermonde_directly}. 

\begin{cor}[Simplified version of \cref{thm: main_APA_error_bound}]\label{thm: simplified_APA_bound}
    Let $A, B \in {\mathbb C}^{n \times n}$ with $C = AB$ and let $ALG$ be a floating-point, alternative basis matrix multiplication routine, which implements recursively a degree $d$ base-case algorithm for $k \times k$ approximate matrix multiplication (where we assume $n$ is a power of $k$ and accordingly that the algorithm is applied over $L = \log_k(n)$ recursive steps). Suppose that $\widehat{C}$ is the output of the approximate-to-exact procedure  of Bini \cite{Bini1980} applied to $ALG$ in floating-point arithmetic, taking the evaluation parameters $\varepsilon_1, \ldots, \varepsilon_p$ to be floating-point approximations of the $p$-th roots of unity, where $p = d \log_kn + 1$. Assume further that all conditions of \cref{thm: main_APA_error_bound} hold, with in particular the solution to the corresponding Vandermonde system and the entries of the encoding matrices/basis transformations (at each $\varepsilon_j$) computed to sufficiently high relative accuracy to guarantee $g(\delta_1,\delta_2) \leq  1 + cp\mathbf{u} + O(\mathbf{u}^2)$ for a  constant $c > 0$ and $\mathbf{u}$ the unit roundoff.  In this case, we have
    \[ \|\widehat{C}-C\| \leq f_{ALG}(n)\|A\|\|B\| \text{\normalfont  \bf u} + O(\text{\normalfont \bf u}^2) \]
    for 
    \[f_{ALG}(n) =\frac{1}{p} \sum_{j=1}^p \left(2 + Q(\varepsilon_j)\log_kn + \lceil \log_2(p) \rceil + cp   \right)n^{\log_kE(\varepsilon_j)}, \]
    where $\|\cdot \|$ is the maximum entry norm and $Q(\varepsilon)$ and $E(\varepsilon)$ are the stability prefactor and factor, respectively, of $ALG$ (see \cref{defn:stability_factor} and \cref{defn:stability_prefactor}).
\end{cor}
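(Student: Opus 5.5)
This corollary should follow by specializing \cref{thm: main_APA_error_bound} to $m=k=n$, $m_0=k_0=n_0=k$ (the base size) and $L=\log_k n$, and then simplifying each factor. Since $n=k^L$, we have $k/k_0^L = n/k^L = 1$, so the leading prefactor disappears. Also $E(\varepsilon_j)^L = E(\varepsilon_j)^{\log_k n} = n^{\log_k E(\varepsilon_j)}$, and $Q(\varepsilon_j)L = Q(\varepsilon_j)\log_k n$. With these substitutions, $f(\varepsilon_j)$ becomes $1 + Q(\varepsilon_j)\log_k n + \lceil \log_2 p\rceil + 1 = 2 + Q(\varepsilon_j)\log_k n + \lceil \log_2 p \rceil$. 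The hypotheses of \cref{thm: main_APA_error_bound} are assumed in the statement. The rounded roots of unity are nonzero, distinct floating-point numbers for $p$ not too large, so condition 1 holds.

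Next I would handle the $g$ terms using the assumption $g(\delta_1,\delta_2)\le 1+cp\mathbf{u}+O(\mathbf{u}^2)$. First, $g(\delta_1,\delta_2) f(\varepsilon_j)\mathbf{u} = f(\varepsilon_j)\mathbf{u} + O(\mathbf{u}^2)$. Second, $g(\delta_1,\delta_2)-1 \le cp\mathbf{u} + O(\mathbf{u}^2)$. Adding these, the bracketed coefficient of $E(\varepsilon_j)^L$ is at most $(2 + Q(\varepsilon_j)\log_k n + \lceil\log_2 p\rceil + cp)\mathbf{u} + O(\mathbf{u}^2)$. This is exactly the summand of $f_{ALG}(n)$, apart from the weight in front of it.

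The remaining factor is $\prod_{i\ne j} |\tilde\varepsilon_i|/|\tilde\varepsilon_i-\tilde\varepsilon_j| = |\tilde\alpha_j|$, which I must replace by $1/p$. The paper's discussion before the corollary, via \cref{prop: vandermonde_perturbation}, gives $|\tilde\alpha_j - 1/p| \le \frac{1}{p}\cdot\frac{(p-1)\mathbf{u}}{\sin(\pi/p)-p\mathbf{u}} = O(\mathbf{u})$ for fixed $p$. Because this weight multiplies a quantity that is already $O(\mathbf{u})$, writing $|\tilde\alpha_j| = 1/p + O(\mathbf{u})$ only adds $O(\mathbf{u}^2)$ terms. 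Summing over $j$ then yields $\frac{1}{p}\sum_j(\cdots)n^{\log_k E(\varepsilon_j)}\|A\|\|B\|\mathbf{u} + O(\mathbf{u}^2)$, which is the claimed bound.

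The main subtlety is bookkeeping in the $O(\mathbf{u}^2)$ terms. These hide constants that depend on $p$, $n$, $E$ and $Q$, which is standard in this style of bound and matches the convention of \cref{thm: single_evaluation_bound}. I would also remark that the $\varepsilon_j$ inside $Q$ and $E$ are the rounded roots actually used by the algorithm.
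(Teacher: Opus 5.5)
Your proposal is correct and takes essentially the same route as the paper. Both specialize \cref{thm: main_APA_error_bound} using $k/k_0^L=1$ and $E(\varepsilon_j)^L=n^{\log_k E(\varepsilon_j)}$, replace the weight $\prod_{i\neq j}|\varepsilon_i|/|\varepsilon_i-\varepsilon_j|$ by $1/p+O(\mathbf{u})$ via \cref{prop: vandermonde_perturbation} (exactly as in the discussion preceding the corollary), and use $g(\delta_1,\delta_2)f(\varepsilon_j)\mathbf{u}+(g(\delta_1,\delta_2)-1)\le (f(\varepsilon_j)+cp)\mathbf{u}+O(\mathbf{u}^2)$ to obtain the stated first-order bound.
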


This result suggests that the (optimal) stability of an approximate-to-exact APA matrix multiplication algorithm is characterized by the behavior of the corresponding stability factor/prefactor near the unit circle.  \\
\indent Before moving on, we note that using approximate roots of unity for $\varepsilon_1, \ldots, \varepsilon_p$ as suggested above will necessarily turn any APA algorithm into one with complex encoding matrices, which may considerably drive up the arithmetic cost of the algorithm (by a constant factor). This marks an efficiency/stability tradeoff for users to navigate. Of course, the bounds proved in this section are potentially loose, meaning that in practice, there may be little performance difference (in terms of accuracy) between \cref{alg:alt_basis_APA_exact} with approximate roots of unity and real $\varepsilon_1, \ldots, \varepsilon_p$. 

\subsection{Extensions}\label{section: extension}

A drawback of \cref{alg:alt_basis_APA_exact} is the need to evaluate \textbf{AltBasis-Recursive-APA}$_{\langle \phi,\psi,\nu \rangle}$ at $p = dL+1$ points, which ultimately leads to a logarithmic factor increase in its complexity. 
To get around this, we consider here an alternative, \textit{truncated} version of \cref{alg:alt_basis_APA_exact}, which evaluates \textbf{AltBasis-Recursive-APA}$_{\langle \phi,\psi,\nu \rangle}$ a fixed $r < p$ times and takes a linear combination of the outputs to cancel the first $r-1$ terms of the remainder (see \cref{alg:alt_basis_APA_exact_truncated}).

\begin{algorithm}[h]
\caption{Truncated Approximate-to-Exact Alternative Basis APA Matrix Multiplication\\
\textbf{Input:} $A \in {\mathbb C}^{m \times k}$, $B \in {\mathbb C}^{k \times n}$ and parameters $ \varepsilon_1, \ldots,\varepsilon_r \in {\mathbb C}$, where $|\varepsilon_j| < 1$ and $r \leq dL+1$. \\
\textbf{Output:} $C = AB + \left( \sum_{j=1}^r \alpha_j \varepsilon_j^r\right)R_r + \cdots + \left( \sum_{j=1}^r \alpha_j \varepsilon_j^{p-1} \right)R_{p-1}$. }\label{alg:alt_basis_APA_exact_truncated}
\begin{algorithmic}[1]
\STATE{$[\alpha_1, \ldots, \alpha_r] = $ \textbf{Vandermonde-Solve}$(\varepsilon_1, \ldots, \varepsilon_r)$}
\FOR{$j = 1:r$}
    \STATE $C_j = $ \textbf{AltBasis-Recursive-APA}$_{\langle \phi, \psi, \nu \rangle}(A,B,\varepsilon_j)$
\ENDFOR
\STATE{$C = \alpha_1C_1 + \alpha_2C_2 + \cdots + \alpha_rC_r$}
\RETURN{$C$}
\end{algorithmic}
\end{algorithm}

If the evaluation parameters $\varepsilon_1, \ldots, \varepsilon_r$ satisfy $|\varepsilon_j| < 1$ for all $j$, we can expect that the output of this algorithm will be close to $C$, as the portion of the remainder left over depends on powers of $\varepsilon$ that are $r$ or larger. Again, this trades stability for performance; in this setting, we have both a nonvanishing remainder \textit{and} evaluation parameters that are more tightly spaced (the latter increasing the error incurred by \textbf{Vandermonde-Solve} in floating point). Nevertheless, the improved efficiency is significant, particularly when $r = O(1)$. \\
\indent To derive an error bound for \cref{alg:alt_basis_APA_exact_truncated}, we first need the following technical lemma, which bounds the norms of the residual matrices $R_1, \ldots, R_{p-1}$.

\begin{lem}\label{lemma: residual_matrix_bound}
    Let $R_l \in {\mathbb C}^{m \times n}$, $1 \leq l \leq p-1$ be the residual matrices associated to a recursive, alternative basis APA matrix multiplication routine (see \cref{alg:alt_basis_APA}) with input matrices $A \in {\mathbb C}^{m \times k}$ and $B \in {\mathbb C}^{k \times n}$. Then 
    \[ \|R_l \| \leq \frac{(k/k_0^L)}{p} \left[ \sum_{j=1}^p E( e^{2 \pi ij/p})^L \right] \|A\|\|B\|, \]
    where $\|\cdot \|$ is the maximum entry norm and $E(\varepsilon)$ is the stability factor of the algorithm. 
\end{lem}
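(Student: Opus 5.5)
The plan is to regard the exact-arithmetic output of \cref{alg:alt_basis_APA} as a matrix-valued function of $\varepsilon$. Call it
\[
M(\varepsilon) = AB + \varepsilon R_1 + \cdots + \varepsilon^{p-1}R_{p-1}.
\]
Each entry of $M(\varepsilon)$ is a Laurent polynomial in $\varepsilon$, because the encoding matrices and basis transformations have Laurent or rational entries. However, since the output is the stated polynomial in $\varepsilon$, its restriction to $\varepsilon\neq 0$ coincides with that polynomial. We then extract $R_l$ with the discrete Fourier transform at the $p$-th roots of unity $\omega_j = e^{2\pi i j/p}$. The polynomial has degree at most $p-1$, so there is no aliasing and exact coefficient extraction gives, for $0 \le l \le p-1$,
\[
R_l = \frac{1}{p}\sum_{j=1}^p \omega_j^{-l} M(\omega_j),
\]
with $R_0 = AB$. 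This follows from $\sum_{j=1}^p \omega_j^{s} = p$ if $p \mid s$ and $0$ otherwise, and from the fact that all exponents $s = l' - l$ lie in $(-p,p)$.

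Next we take the maximum-entry norm and use the triangle inequality together with $|\omega_j^{-l}| = 1$. This gives
\[
\|R_l\| \le \frac{1}{p}\sum_{j=1}^p \|M(\omega_j)\|.
\]
We then apply the output bound \eqref{eqn: output_norm_bound} at each $\varepsilon = \omega_j$, namely $\|M(\omega_j)\| \le (k/k_0^L)E(\omega_j)^L\|A\|\|B\|$. Substituting this finishes the proof.

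The main point requiring care is justifying \eqref{eqn: output_norm_bound} at the specific points $\omega_j$, and making sure the identity $M(\varepsilon)=AB+\sum_l\varepsilon^lR_l$ holds pointwise there. The bases may involve rational functions of $\varepsilon$, so we need $\phi_{\omega_j}$, $\psi_{\omega_j}$ and $\nu^{-1}_{\omega_j}$ to be defined, i.e.\ no poles on the unit circle. In that case both sides are rational functions that agree generically, so they agree at $\omega_j$. If a pole occurred, $E(\omega_j)$ would be infinite and the bound would be trivial. I would also briefly recheck \eqref{eqn: output_norm_bound}. It follows by induction on the recursion: each level multiplies entrywise magnitudes by at most the entries of $|\nu^{-1}||W_\nu|(a_\varepsilon\odot b_\varepsilon)$, and the classical base case contributes $k/k_0^L$. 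This is the step the paper already attributes to \cite{Schwartz2026_journal}.
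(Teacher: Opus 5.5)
Your proposal is correct and follows essentially the same route as the paper. The paper solves the Vandermonde system at the $p$-th roots of unity with right-hand side the $(l+1)$-st standard basis vector, writes $R_l=\sum_j \alpha_j^l C_j$, uses $|\alpha_j^l|=1/p$, and applies \eqref{eqn: output_norm_bound}. Your DFT coefficients $\alpha_j^l=\omega_j^{-l}/p$ are exactly the explicit solution of that system, so they also justify the identity $|\alpha_j^l|=1/p$, which the paper only asserts.
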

\begin{proof}
    Let $(\alpha_1^l,\ldots ,\alpha_p^l)^T \in {\mathbb C}^p$ be the solution to the Vandermonde system \eqref{eqn: vandermonde_system_d} with the right hand side replaced by the $(l+1)$-st standard basis vector and the evaluation parameters $\varepsilon_1, \ldots, \varepsilon_p$ taken to be the $p$-th roots of unity -- i.e., $\varepsilon_j = e^{2\pi ij/p}$. If $C_j$ is the exact-arithmetic output of one run of \cref{alg:alt_basis_APA} applied to $A$ and $B$ with evaluation parameter $\varepsilon_j$, then we have
    \[ R_l = \alpha_1^lC_1 + \alpha_2^l C_2 + \cdots + \alpha_p^l C_p. \]
    Recalling \eqref{eqn: output_norm_bound}, we can bound $\|C_j\|$ as $\|C_j\| \leq (k/k_0^L) E(\varepsilon_j)^L \|A\|\|B\|$ for each $1 \leq j \leq p$. Hence,
    \[ \|R_l \| \leq \sum_{j=1}^p |\alpha_j^l| \|C_j\| \leq \frac{(k/k_0^L)}{p} \left[\sum_{j=1}^p E(e^{2 \pi ij/p})^L \right] \|A\|\|B\|, \]
    where $|\alpha_j^l| = 1/p$ for all $1 \leq j \leq p$ since $\varepsilon_1, \ldots, \varepsilon_p$ are roots of unity. 
\end{proof}

If $C = AB$ and $\overline{C}$ is the exact-arithmetic output of \cref{alg:alt_basis_APA_exact_truncated} -- i.e., 
\begin{equation}
    \overline{C} = C + \left( \sum_{j=1}^r \alpha_j \varepsilon_j^r\right)R_r + \cdots + \left( \sum_{j=1}^r \alpha_j \varepsilon_j^{p-1} \right)R_{p-1},
\end{equation}
the previous lemma implies
\begin{equation}\label{eqn: residual_bound}
    \aligned 
    \|\overline{C} - C\| &= \left\| \left( \sum_{j=1}^r \alpha_j \varepsilon_j^r \right) R_r + \cdots +\left( \sum_{j=1}^r \alpha_j \varepsilon_j^{p-1} \right)R_{p-1} \right\| \\
    & \leq \left( \sum_{s = r}^{p-1} \sum_{j=1}^r |\alpha_j||\varepsilon_j|^s \right)\frac{(k/k_0^L)}{p} \left[ \sum_{j=1}^p E(e^{2 \pi ij/p})^L \right] \|A\| \|B\| \\
    & = \left( \sum_{s = r}^{p-1} \sum_{j=1}^r |\varepsilon_j|^s\prod_{i \neq j} \frac{|\varepsilon_i|}{|\varepsilon_i-\varepsilon_j|} \right)\frac{(k/k_0^L)}{p} \left[ \sum_{j=1}^p E(e^{2 \pi ij/p})^L \right] \|A\| \|B\|.
    \endaligned 
\end{equation}
The key insight now is that \cref{thm: main_APA_error_bound}, whose proof does not use the fact that the output of \cref{alg:alt_basis_APA_exact} is the exact product of $A$ and $B$, can be used to bound the error in a floating-point implementation of \cref{alg:alt_basis_APA_exact_truncated}. That is, if $\widehat{C}$ is the corresponding floating-point output (and assuming the same conditions as \cref{thm: main_APA_error_bound}), then 
\begin{equation}\label{eqn: putting_things_together}
    \aligned
    \|\widehat{C}&-C\|  \leq  \|\widehat{C} - \overline{C}\| + \|\overline{C} - C\| \\
    & \leq (k/k_0^L) \left[\sum_{j=1}^r \left(\prod_{i \neq j} \frac{|\varepsilon_i|}{|\varepsilon_i-\varepsilon_j|} \right) \left(g(\delta_1,\delta_2)f(\varepsilon_j) \mathbf{u} + (g(\delta_1,\delta_2)-1) \right)E(\varepsilon_j)^L \right] \|A\| \|B\| +O(\textbf{u}^2) \\
    & \hspace{1cm} + \left( \sum_{s = r}^{p-1} \sum_{j=1}^r |\varepsilon_j|^s\prod_{i \neq j} \frac{|\varepsilon_i|}{|\varepsilon_i-\varepsilon_j|} \right)\frac{(k/k_0^L)}{p} \left[ \sum_{j=1}^p E(e^{2 \pi ij/p})^L \right] \|A\| \|B\|.
    \endaligned 
\end{equation}
This decomposes the error in \cref{alg:alt_basis_APA_exact_truncated} as the sum of the loss of accuracy due to the truncation (term two) and the loss of accuracy due to floating-point arithmetic (term one). The latter can be made arbitrarily small by choosing $r$ and $\varepsilon, \ldots, \varepsilon_r$ appropriately. If, for example, we want 
\begin{equation}\label{eqn: epsilon_requirement}
    \sum_{s = r}^{p-1} \sum_{j=1}^r |\varepsilon_j|^s\prod_{i \neq j} \frac{|\varepsilon_i|}{|\varepsilon_i-\varepsilon_j|} = \Theta(\textbf{u})
\end{equation}
so that \eqref{eqn: putting_things_together} essentially implies (weak) forward stability for this algorithm, and additionally if we take $\varepsilon_j = \beta \cdot e^{2 \pi ij/r}$ for some $0 < \beta < 1$ (ignoring the fact that this is not a floating-point number) then \eqref{eqn: epsilon_requirement} reduces to
\begin{equation}
    \sum_{s = r}^{p-1} \beta^s = \Theta(\textbf{u}),
\end{equation}
which we can simplify further by noting
\begin{equation}
    \sum_{s = r}^{p-1} \beta^s \leq \sum_{s = r}^{\infty}\beta^s = \sum_{s = 0}^{\infty}\beta^s - \sum_{s=0}^{r-1} \beta^s = \frac{1}{1-\beta} - \frac{1-\beta^{r}}{1-\beta} = \frac{\beta^r}{1-\beta}.
\end{equation}
Hence, it is sufficient to choose $\beta$ and $r$ such that $\frac{\beta^r}{1-\beta} = \Theta( \textbf{u})$, a condition that we note is independent of $n$.

\section{Numerical Examples}\label{section: examples}

In this section, we present a collection of  numerical experiments aimed at verifying the theoretical results from the previous section. The first two explore the validity/tightness of our stability bounds (with and without alternative bases), while the third tests the effectiveness of the truncation idea discussed in \cref{section: extension}. All experiments were done in Matlab and follow a common setup described in \cref{subsection: setup}.  Importantly, these experiments do not demonstrate the performance of APA matrix multiplication in a high-performance environment, which we leave to future work. Takeaways include the following:
\begin{enumerate}
    \item \cref{thm: main_APA_error_bound} and \cref{thm: simplified_APA_bound}, though not tight, effectively gauge the (relative) stability of APA routines.
    \item While alternative basis algorithms have a weaker stability bound, they perform no worse in practice. 
    \item Truncation can significantly improve the run time of an APA matrix multiplication algorithm, trading mild accuracy loss for speed. 
\end{enumerate}


\subsection{Experimental Setup}\label{subsection: setup}
We implement several APA matrix multiplication routines, including some existing algorithms and some new ones found in our search. All implementations are recursive and run to the corresponding base case. That is, we do not default to classical or fast exact matrix multiplication at any point. Unless stated otherwise, we implement the full interpolation (i.e., \cref{alg:alt_basis_APA_exact}) using roots of unity to obtain an exact product (matching the setting of \cref{thm: simplified_APA_bound}). In each experiment, we measure (relative) forward error via $\|\widehat{C}-C\|/(\|A\|\|B\|)$, where $\widehat{C}$ is the computed value of $C = AB$ and $\|\cdot \|$ in the maximum-entry norm. 
Since we use random input matrices, each reported value of this error metric is the mean over ten independent draws of $A$ and $B$. \\
\indent All algorithms for computing $\widehat{C}$ are run in IEEE-754 binary64, whose corresponding unit roundoff is $\text{\normalfont \bf u} = 2^{-53}\approx 1.11\times 10^{-16}$. To get a surrogate for the exact product $C = AB$, we convert $A$ and $B$ into symbolic form and 
evaluate the product in $32$-digit arithmetic; relative error is then computed by evaluating $\widehat{C} - C$ symbolically before rounding to double precision. All measurements were taken on an Apple M4 running macOS 26.5.1, 
with experiments performed in MATLAB  version R2025b (25.2.0.3042426, Update 1) using the Symbolic Math Toolbox 25.2 and the Parallel Computing Toolbox 25.2. 
\subsection{Verifying \cref{thm: main_APA_error_bound}}\label{subsection: ex1}
We start by testing how accurately the stability bounds from \cref{section: stability} predict the practical performance of an APA algorithm. We specifically aim to answer the following questions: How tightly does \cref{thm: main_APA_error_bound} bound the measured error for different input distributions? Do these bounds ``rank" algorithms, in terms of stability, appropriately? \\ 
\indent For our first experiment, we test a $\langle 3,3,3; 21 \rangle$ APA algorithm found in our search against two existing APA algorithms for the same size -- i.e., a $\langle 3,3,3, 20 \rangle$ algorithm of Smirnov \cite{Smirnov2013} and a $\langle 3,3,3;21 \rangle$ algorithm of Schönhage \cite{Partial_to_Total} (all in the standard basis). The details of these algorithms are summarized in \cref{tab:exp1-schemes}. We also include the classical algorithm -- which is not just norm-wise stable but in fact entry-wise stable -- as a baseline. Input matrices $A$ and $B$ are drawn from the following three distributions: $\mathrm{Unif}(-1,1)$, $\mathrm{Unif}(0,1)$, and $\mathcal{N}(0,1)$.

\begin{table}[htbp]
  \centering
  \begin{tabular}{lccc}
    \toprule
    APA Algorithm & $\omega$ & Degree ($d$) \\
    \midrule
    Smirnov   $\langle 3,3,3,20 \rangle$ \cite{Smirnov2013}  & 2.7268  & 10 \\
    Sch\"onhage $\langle 3,3,3,21 \rangle$ \cite{Partial_to_Total}  & 2.7712  & 2 \\
    Ours  $\langle 3,3,3,21 \rangle$ & 2.7712       & 5 \\
    \bottomrule
  \end{tabular}
  \caption{Three APA algorithms for $\langle 3,3,3 \rangle $ matrix multiplication.}
  \label{tab:exp1-schemes}
\end{table}

\begin{figure}[t]
  \centering
  \includegraphics[width=.9\textwidth]{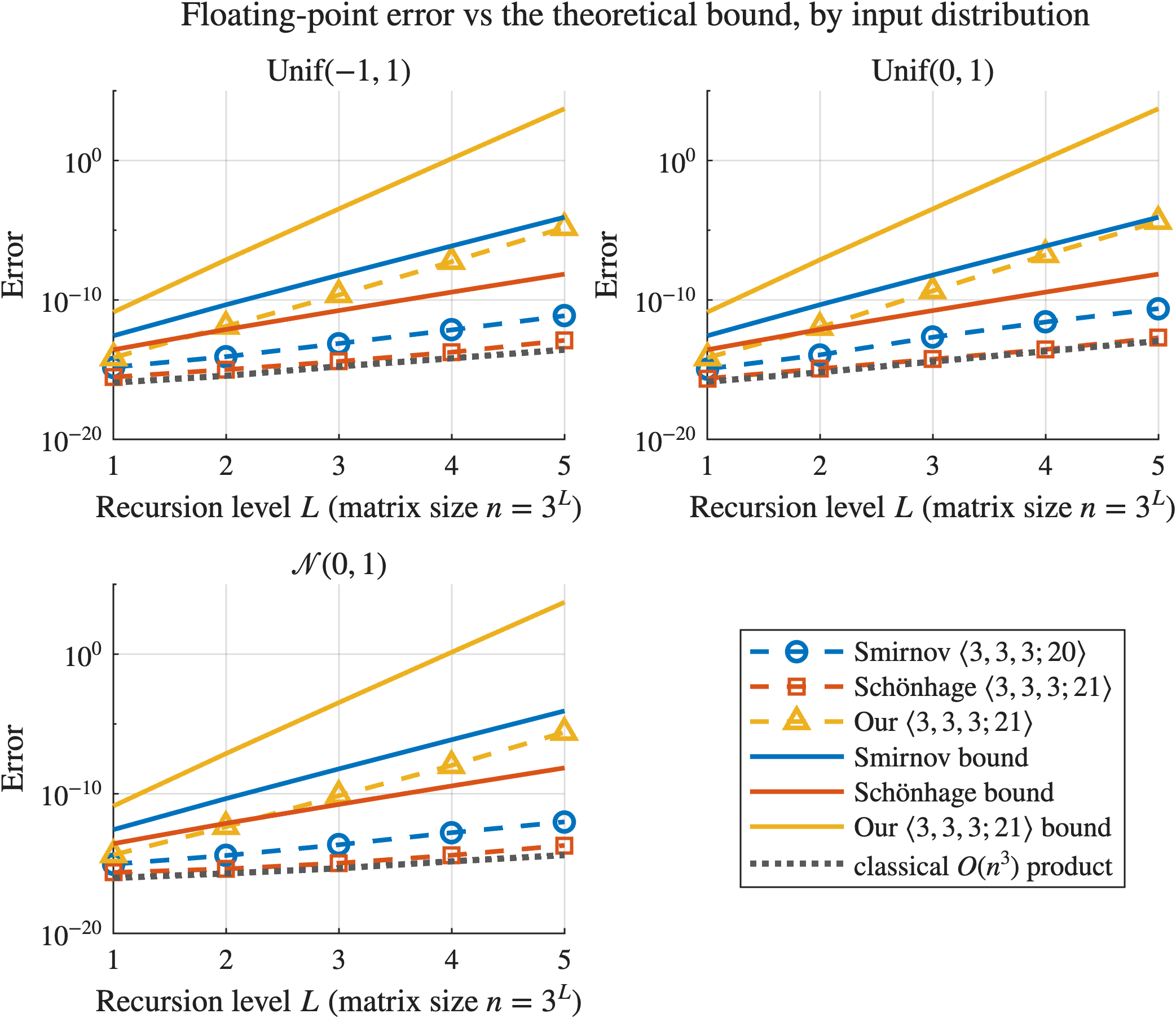}
  \caption{Measured and theoretical accuracy of three APA algorithms for $\langle 3,3,3 \rangle$ matrix multiplication applied to various input matrices, as measured by the normalized error $\|\widehat{C}-C\|/(\|A\|\|B\|)$ and compared to the bounds from \cref{thm: simplified_APA_bound}. The panels share a vertical scale, and the solid curves are identical in all three.}
  \label{fig:exp1-grid}
\end{figure}

\indent Results are shown in \cref{fig:exp1-grid}, where each panel corresponds to one input distribution. For each algorithm/input type, we plot both the empirical performance (with a dashed line) against the corresponding error bound from \cref{section: stability} (represented by a solid line). For reference, the accuracy of the classical $O(n^3)$ algorithm is shown with a dotted gray line. We see here that our main stability bound is descriptive and, moreover, that it accurately ranks the three algorithms in terms of expected accuracy.  \\
\indent An alternative version of these plots is presented in \cref{fig:exp1-dist}, where each panel now corresponds to a different algorithm and the plotted curves correspond to different choices of $A$ and $B$. In addition to the random input matrices discussed above, these plots include results for a ``worse-case" option.\footnote{We use the term ``worse-case" instead of ``worst-case" since we make no claim that these inputs are actually the most difficult for our algorithms.} Corresponding ``worse-case" inputs are algorithm dependent, constructed to maximize the error given a set of encoding matrices. To do this, we first construct (nonzero) inputs $A$ and $B$ with entries in $\{0,-1,1\}$ chosen to  maximize
\begin{equation}
  \label{eq:adv-objective}
  \max_{r} \; \sum_{k} \sum_{s}
      \bigl\lvert W_{rs}(\varepsilon_k) \bigr\rvert \;
      \bigl\lvert a^T U_s(\varepsilon_k) \bigr\rvert \;
      \bigl\lvert b^T V_s(\varepsilon_k) \bigr\rvert,
\end{equation}
where $U_s$ and $V_s$ are the $s$-th columns of the encoding matrices $U$ and $V$ (which describe the base-case algorithm) and $a$ and $b$ are vectorized copies of $A$ and $B$. Since the base case here is only $\langle 3,3,3 \rangle$, these matrices can be found via an exhaustive search. We then take Kronecker powers of $A$ and $B$ to obtain ``worse-case" inputs of arbitrary size. Recalling \eqref{eqn: encoding_relation}, the idea here is to encourage each intermediate product $C_j$ in \cref{alg:alt_basis_APA_exact} to have large entries despite the fact that $\|A\|\|B\|$ is fixed, thereby promoting potentially catastrophic cancellation in the sum reduction used to obtain $\widehat{C}$ from the intermediate products. Once again, we include the classical algorithm in \cref{fig:exp1-dist} as a baseline.

\begin{figure}[t]
  \centering
  \includegraphics[width=.9\textwidth]{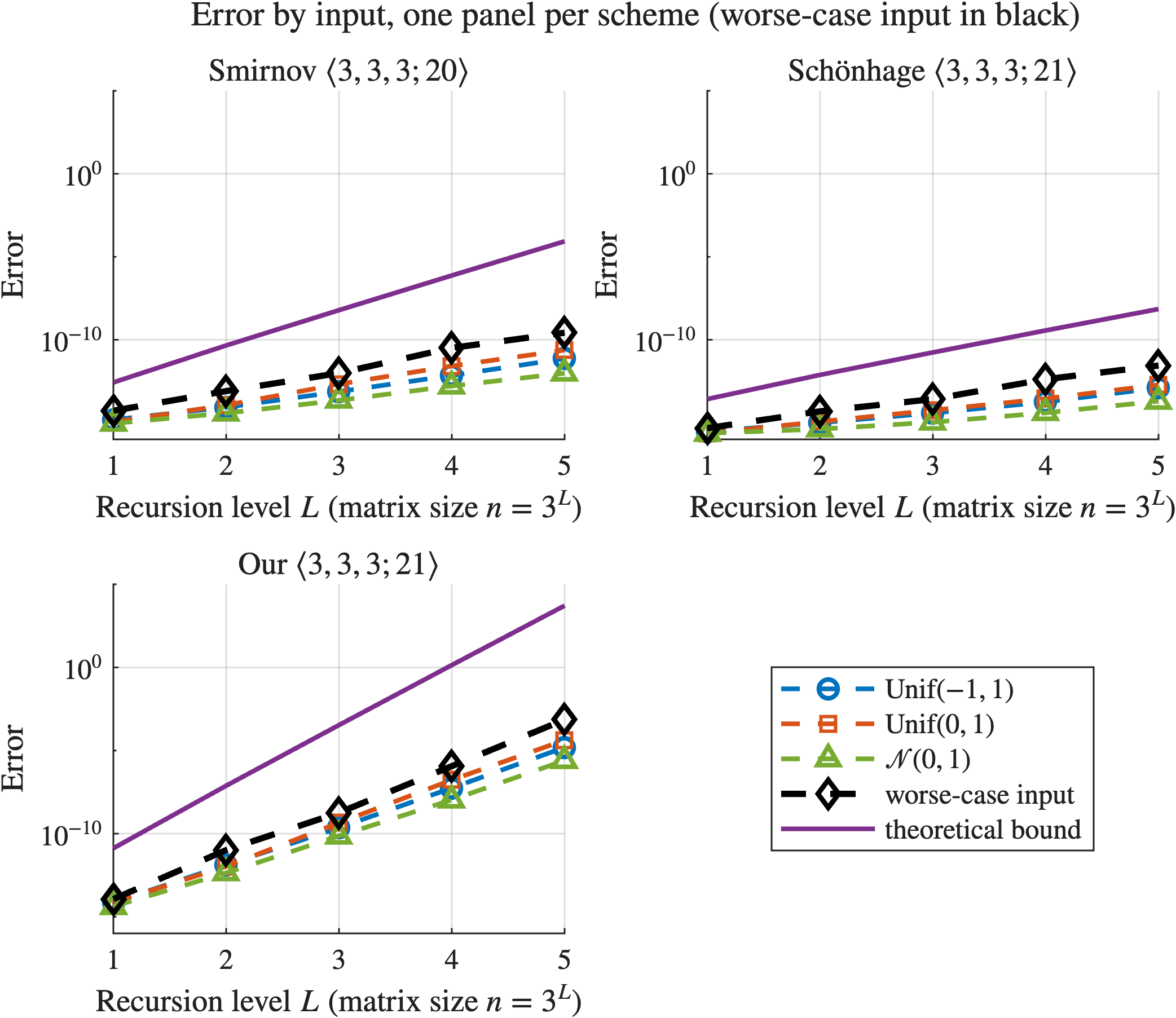}
  \caption{A repeat of \cref{fig:exp1-grid}, with results grouped by algorithm instead of input type. An additional ``worse-case" input is included here, constructed as described in the text. Note that all three panels share a vertical scale.}
  \label{fig:exp1-dist}
\end{figure}

\subsection{Stability with Alternative Bases}\label{subsection: ex2}
Alternative basis exact matrix multiplication is already known to be fast and stable \cite{Schwartz2026_journal}. \cref{section: stability} demonstrated that the theoretical bounds of \cite{Schwartz2026_journal} extend to APA matrix multiplication; here, we show that these algorithms are stable in practice as well.\\
\indent For this experiment, we test three algorithms corresponding to different sizes:\ Schönhage's $\langle 3,3,3; 21 \rangle$ and two algorithms found in our search: $\langle 2,2,3; 10\rangle$ and $\langle 2,2,5; 17 \rangle$.  
These were selected to show the range of potential stability loss predicted by \cref{thm: main_APA_error_bound}; the $\langle 2,2,5; 17 \rangle$ algorithm is the worst -- meaning its error bound grows the most when comparing with/without alternative bases -- while $\langle 3, 3,3; 21 \rangle$ shows the least growth and $\langle 2,2,3; 10 \rangle$ is somewhere in the middle.
For simplicity, we only compare each algorithm to itself with and without alternative bases, both run on the same set of input matrices $A$ and $B$.  As in the previous experiment, the entries of these matrices are drawn from $\mathcal{N}(0,1)$ and the reported error matches the setup described in \cref{subsection: setup}. \\
\indent \cref{fig:exp2} shows relative performance for all three algorithms. Note that the horizontal axes on these plots correspond to the inner dimension of the multiplication and not the number of recursive steps as in \cref{fig:exp1-grid,fig:exp1-dist}. Once again, empirical performance results are represented with dashed lines while the bounds implied by \cref{thm: main_APA_error_bound} are shown with solid lines. The gap between the latter indicates how much worse \cref{thm: main_APA_error_bound} predicts the stability of the algorithm to be under alternative bases. While this gap can be quite large (particularly for $\langle 2,2,5; 17 \rangle$, which, again, was chosen for this reason) we see essentially no difference in the practical performance. This is shown precisely in \cref{tab:exp2-ratio}, which confirms that the empirical results shown in \cref{fig:exp2} are not simply compressed due to the scale of the plots. In total this re-affirms an observation made in \cite{Schwartz2026_journal} for the fast exact setting:\ the predicted loss of stability under alternative bases is primarily an artifact of the analysis from \cref{section: stability}, and specifically the fact that the stability factor (i.e., \cref{defn:stability_factor}) is strictly larger when alternative bases are used.

\begin{figure}[t]
  \centering
  \includegraphics[width=0.9\textwidth]{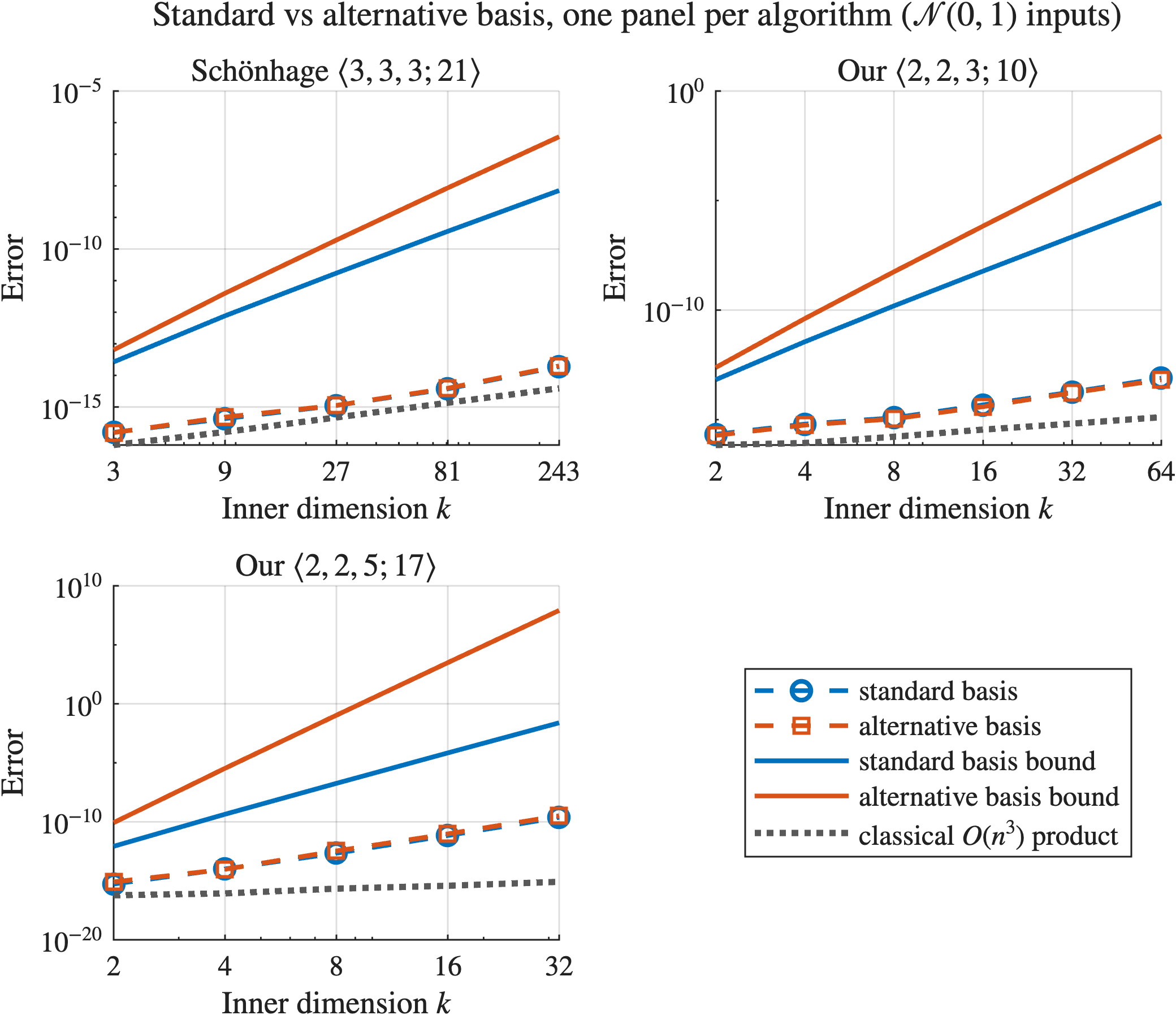}
  \caption{Empirical/predicted error for various APA matrix multiplications algorithms with and without alternative bases. As in \cref{fig:exp1-grid,fig:exp1-dist}, dashed lines mark practical results and solid lines represent the bound implied by \cref{thm: main_APA_error_bound}. Again, the error of the classical $O(n^3)$ algorithm is included for reference (in grey).}
  \label{fig:exp2}
\end{figure}

\begin{table}[htbp]
  \centering
  \begin{subtable}{0.48\textwidth}
    \centering
    \begin{tabular}{lcccccc}
      \toprule
      & \multicolumn{6}{c}{Inner Dimension ($k$)} \\
      &    3 &    9 &   27 &   81 &  243 &     \\
      \cmidrule(l){2-7}
      Bound     & 2.32 & 4.98 & 10.6 & 22.6 & 48.1 &     \\
      Empirical & 0.98 & 1.12 & 1.01 & 1.05 & 1.04 &     \\
      \bottomrule
    \end{tabular}
    \subcaption{$\langle 3,3,3;21 \rangle$}
  \end{subtable}
  \hfill
  \begin{subtable}{0.48\textwidth}
    \centering
    \begin{tabular}{lcccccc}
      \toprule
      & \multicolumn{6}{c}{Inner Dimension ($k$)} \\
      &    2 &    4 &    8 &   16 &   32 &   64 \\
      \cmidrule(l){2-7}
      Bound     & 3.22 & 10.1 & 31.7 & 99.2 &  311 &  974 \\
      Empirical & 0.90 & 0.90 & 0.90 & 0.83 & 0.93 & 0.85 \\
      \bottomrule
    \end{tabular}
    \subcaption{$\langle 2,2,3;10 \rangle$}
  \end{subtable}

  \vspace{1.7em}

  \begin{subtable}{0.6\textwidth}
    \centering
    \begin{tabular}{lcccccc}
      \toprule
      & \multicolumn{6}{c}{Inner Dimension ($k$)} \\
      &    2 &    4 &    8 &   16 &   32 &     \\
      \cmidrule(l){2-7}
      Bound     & 88.0 & $6.6{\times}10^{3}$ & $5.0{\times}10^{5}$
                & $3.8{\times}10^{7}$ & $2.8{\times}10^{9}$ &     \\
      Empirical & 1.52 & 0.93 & 1.56 & 1.38 & 1.39 &     \\
      \bottomrule
    \end{tabular}
    \subcaption{$\langle 2,2,5;17\rangle$}
  \end{subtable}

  \caption{Error ratios (either empirical or predicted by \cref{thm: main_APA_error_bound}) for the APA algorithms shown in \cref{fig:exp2}, obtained by dividing the error with alternative bases by the error in the standard basis. A value of one indicates the algorithm is equally accurate with/without alternative bases. 
    }
  \label{tab:exp2-ratio}
\end{table}

\subsection{Effectiveness of Truncation}\label{subsection: ex3}
For our final experiment, we explore the impact of the truncation approach (outlined in \cref{section: extension}) on the stability of an alternative basis APA algorithm. Accordingly, we run \cref{alg:alt_basis_APA_exact_truncated}, evaluating an APA algorithm a fixed $r$ times and taking $\varepsilon_j = \beta \cdot e^{2 \pi ij/r}$ for some $0 < \beta < 1$. The idea here is that, if $\beta$ is small enough to make the uncanceled remainder negligible but not so small that it introduces instability, we can get a decent approximation to the true product without executing the full interpolation procedure of \cref{alg:alt_basis_APA_exact}. \\
\indent We test \cref{alg:alt_basis_APA_exact_truncated} on three APA routines:\ Schönhage's $\langle3,3,3;21\rangle$ algorithm (which has highest degree two in its base-case remainder), Smirnov's $\langle3,3,3;20\rangle$ algorithm (which has highest degree ten), and our $\langle2,2,3;10\rangle$ algorithm (which has highest degree six). Input matrices $A$ and $B$ are chosen to be $m_0^4\times k_0^4$ and $k_0^4\times n_0^4$ with entries sampled independently from ${\mathcal N}(0,1)$. \cref{fig:exp3-schonhege} shows accuracy results for each algorithm under varying choices of $r$ and $\beta$. In each case, we see the anticipated tradeoff. If $\beta$ is too small, even evaluating enough times to get an ``exact" product (i.e., running \cref{alg:alt_basis_APA_exact}) gives poor results. On the other hand, if $\beta$ is close to one -- and in particular if only a modestly-accurate product is desired -- we can get away with a constant factor reduction in the number of evaluations. \\
\indent We also observe in these plots the phenomenon discussed in \cref{rem: degree}. That is, while the highest degree in the base-case remainder of our $\langle 2,2,3; 10 \rangle$ algorithm is six (and recalling that we're testing with inputs that require four recursive steps) we do not need $25$ evaluations to get the exact product. This is a result of the fact that the remainder for this algorithm has the special form 
\begin{equation}
    AB + \varepsilon^3R_3 + \varepsilon^6 R_6. 
\end{equation}
Hence, when we apply the algorithm recursively with $L = 4$, only degrees 3, 6, 9, 12, 15, 18, 21, and 24 appear in the remainder. Since we are using scaled $r$-th roots of unity as evaluation points, we  obtain the exact product whenever $r$ does not divide any degree in the remainder; this happens here when $r=25$ but also occurs at much lower values of $r$ since only specific exponents are present. This corresponds to the bands of blue in the $\langle 2,2,3; 10 \rangle$ panel of \cref{fig:exp3-schonhege}. We see similar (though less pronounced) behavior for Smirnov's $\langle 3,3,3; 20 \rangle$ algorithm since its base-case remainder omits the exponent nine. 

\begin{figure}[htbp]
  \centering
  \includegraphics[width=.9\textwidth]{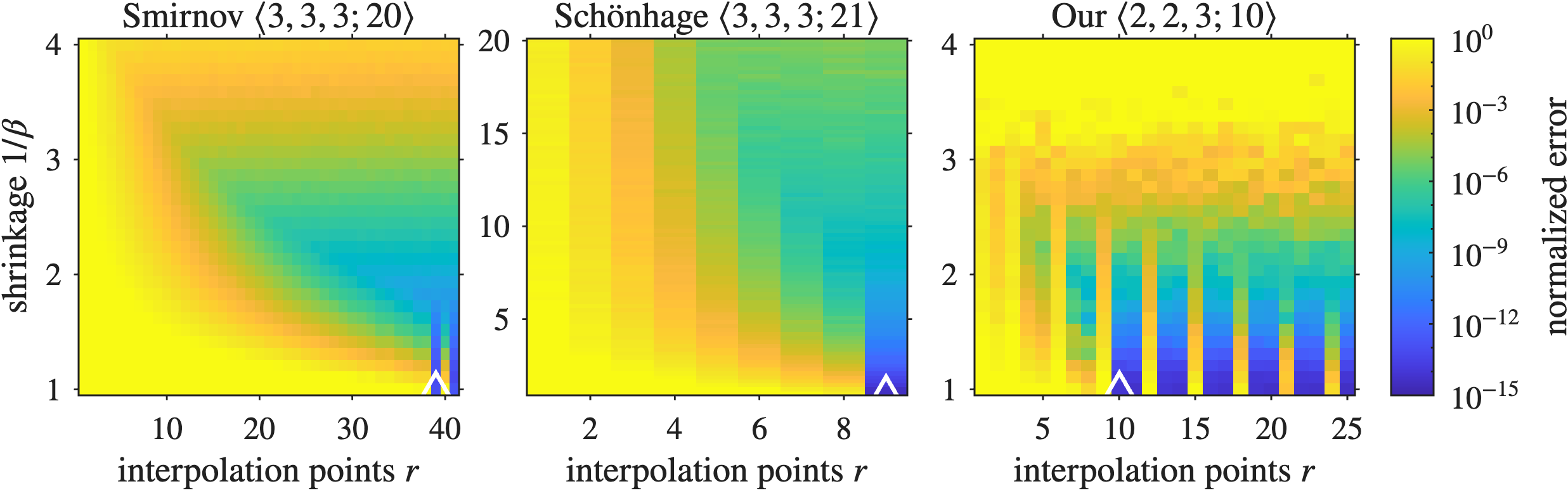}
  \caption{\cref{alg:alt_basis_APA_exact_truncated} runs with various APA routines. For each choice of $r$, we use the evaluation points $\varepsilon_j = \beta \cdot e^{2 \pi i j/r}$ for several choices of $0 < \beta < 1$. The colorbar corresponds to normalized forward error $\|\widehat{C}-C\|/(\|A\|\|B\|)$ in the maximum-entry norm. A triangle marks the smallest error achieved on each plot. }
  
  \label{fig:exp3-schonhege}
\end{figure}

\section{Conclusion}\label{subsection: conclusion}
This paper presented a deep dive into arbitrary precision approximate (APA) matrix multiplication. We provided both a new search algorithm for APA schemes and numerical analysis results aimed at understanding the practical performance of an APA algorithm. As mentioned in the introduction, the goal of this work was to renew interest in APA matrix multiplication, both from the theory and implementation perspectives. \\
\indent In future work, we plan to pursue high-performance implementations of APA matrix multiplication algorithms with the goal of demonstrating results competitive with fast exact matrix multiplication (possibly in applications where lower precision is required). We have already identified several tools for accelerating APA algorithms -- e.g., sparsification, truncation, etc.\ -- but these have not yet been tested in a high-performance environment. It also remains to examine other kinds of approximate matrix multiplication. In particular, it is still open to prove numerical stability for an approximate matrix multiplication algorithm obtained from a floating-point, least-squares solution to the tensor decomposition problem \eqref{eqn: MM_tensor}; several such algorithms exist in the literature \cite{Kaporin2024,Kaporin2024_2,Smirnov2013} and should, like APA matrix multiplication,  be considered for practical use. 

\section{Acknowledgements}\label{section: acknowledgments} 
This work was done in part while the authors were visiting the Simons Institute for the Theory of Computing. Support was provided by a UC Multicampus Research Program Award M25PL9060. The project also received funding from the European Research Council (ERC) under the European Union’s Horizon 2020 research and innovation program (grant agreements 818252 and 101113120) and was supported by grant 1354/23 of the Israel Science Foundation (founded by the Israel Academy of Sciences and Humanities). Additional support was provided by the Science Accelerator and by the Frontiers in Science initiative of the Ministry of Innovation, Science and Technology. Finally, R.S. is supported by NSF grant MSPRF 2402027.  Special thanks to James Demmel for suggesting the topic of APA matrix multiplication and to Dan Romik for helpful comments. \\

\noindent \textbf{AI Declaration:} Artificial intelligence (AI) tools assisted in the production of the code used to run our search algorithm in \cref{section: search} and the numerical examples in \cref{section: examples}. An LLM also provided feedback on an earlier draft of this paper, suggesting the special case for sparsification discussed in \cref{subsec: section_two_special_case} and highlighting an error in the proof of \cref{thm: main_APA_error_bound}, which was corrected by a human. The authors take full responsibility for the correctness of all results presented in this work.  

\bibliographystyle{abbrv}{}
\bibliography{bib}

\appendix


\section{Technical Results for \cref{section: stability}}
This appendix collects technical results used in the proof of our main stability bound (i.e., \cref{thm: main_APA_error_bound}).
\subsection{Vandermonde Bounds}
For a set of (distinct and nonzero) points $\varepsilon_1, \ldots, \varepsilon_p \in {\mathbb C}$, consider the $p \times p$ Vandermonde system 
\begin{equation}\label{eqn: vandermonde_system_app}
        \begin{pmatrix} 1 & 1 & \cdots & 1 \\ 
        \varepsilon_1 & \varepsilon_2 & \cdots & \varepsilon_{p} \\
       \vdots & \vdots & & \vdots \\
       \varepsilon_1^{p-1} & \varepsilon_2^{p-1} & \cdots & \varepsilon_p^{p-1}
        \end{pmatrix} \begin{pmatrix} \alpha_1 \\ \alpha_2 \\ \vdots \\ \alpha_p \end{pmatrix} = \begin{pmatrix} 1 \\ 0 \\ \vdots \\ 0 \end{pmatrix}.
    \end{equation}
Recall from \cref{section: stability} that the exact solution to this system is
\begin{equation}\label{eqn: exact_vandermonde_soln}
    \alpha_j = \prod_{k \neq j} \frac{\varepsilon_k}{\varepsilon_k-\varepsilon_j}, \; \; \; 1 \leq j \leq p.
\end{equation}
Below, we derive the following:
\begin{enumerate}
    \item A (relative) error bound for evaluating \eqref{eqn: exact_vandermonde_soln} in floating-point arithmetic (\cref{prop: evaluate_vandermonde_directly}).
    \item An (exact-arithmetic) perturbation bound for the $\alpha_j$ (\cref{prop: vandermonde_perturbation}).
\end{enumerate}

\indent We start with the floating-point error bound, which assumes the same model of floating-point arithmetic used in \cref{section: stability} (i.e, \cite{Overton,Higham_book}). Informally, \cref{prop: evaluate_vandermonde_directly} implies that, under some mild assumptions, the $\alpha_j$ can be computed accurately (in a relative sense) by simply evaluating \eqref{eqn: exact_vandermonde_soln} directly. 
\begin{prop}\label{prop: evaluate_vandermonde_directly}
    Given distinct, nonzero $\varepsilon_1, \ldots, \varepsilon_p \in {\mathbb C}$, let 
    \[ \alpha_j= \prod_{k \neq j} \frac{\varepsilon_k}{\varepsilon_k-\varepsilon_j} \; \; \; \text{for} \; \; \; 1 \leq j \leq p.  \]
    Suppose that the approximate quantities $\widehat{\alpha}_1, \ldots, \widehat{\alpha}_p$ are the result of evaluating this expression in floating-point arithmetic, where we assume the following: 
    \begin{enumerate}
        \item Each $\varepsilon_i$ is a normalized floating-point number. 
        \item The quotients $\frac{\varepsilon_k}{\varepsilon_k-\varepsilon_j}$ are evaluated directly and multiplied in an order that guarantees no over/underflow occurs. 
        \item $\frac{2(p-1)\textbf{u}}{1-\textbf{u}} < 1$, where \text{\normalfont \bf u} is the unit roundoff. 
    \end{enumerate}
    In this case, we have
    \[ \frac{|\widehat{\alpha}_j - \alpha_j|}{|\alpha_j|} \leq 3p \text{\normalfont \bf u} + O(\text{\normalfont \bf u}^2). \]
\end{prop}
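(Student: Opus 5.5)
The plan is a direct floating-point error count for the product formula, using the standard model \eqref{eqn: floating-point assumption}. For fixed $j$, each factor $\frac{\varepsilon_k}{\varepsilon_k-\varepsilon_j}$ costs two floating-point operations. The subtraction gives $fl(\varepsilon_k-\varepsilon_j) = (\varepsilon_k-\varepsilon_j)(1+\delta_{k,1})$. The division gives a second factor $(1+\delta_{k,2})$. In both, $|\delta_{k,\cdot}|\le \mathbf{u}$, because the $\varepsilon_i$ are floating-point numbers by assumption 1. Since $\varepsilon_k\ne\varepsilon_j$, the computed denominator is nonzero and the quotient is well defined. So the computed quotient equals
\[ \frac{\varepsilon_k}{\varepsilon_k-\varepsilon_j}\cdot\frac{1+\delta_{k,2}}{1+\delta_{k,1}}. \]

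Next come the $p-2$ multiplications that combine the $p-1$ quotients. Each contributes one more factor $(1+\delta)$ with $|\delta|\le\mathbf{u}$, whatever order is used. Assumption 2 rules out over/underflow, so the standard model is valid at every step. Collecting everything,
\[ \widehat{\alpha}_j = \alpha_j \prod_{i=1}^{N}(1+\delta_i)^{\rho_i}, \qquad \rho_i=\pm1, \]
where $N = 2(p-1)+(p-2) = 3p-4$. Of these factors, $p-1$ carry the exponent $-1$ (from the subtractions) and the rest carry $+1$.

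To finish, I apply the standard lemma \cite[Lemma 3.1]{Higham_book}: if $|\delta_i|\le\mathbf{u}$ and $N\mathbf{u}<1$, then $\prod_{i}(1+\delta_i)^{\pm1}=1+\theta_N$ with $|\theta_N|\le \gamma_N = N\mathbf{u}/(1-N\mathbf{u})$. This gives
\[ \frac{|\widehat\alpha_j-\alpha_j|}{|\alpha_j|}\le \gamma_{3p-4} = (3p-4)\mathbf{u}+O(\mathbf{u}^2)\le 3p\mathbf{u}+O(\mathbf{u}^2). \]
If I want to avoid requiring $N\mathbf{u}<1$ for all $N$ at once, I can treat the two kinds of factors separately. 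The $p-1$ reciprocal factors $(1+\delta_{k,1})^{-1}$ are bounded using assumption 3, $\frac{2(p-1)\mathbf{u}}{1-\mathbf{u}}<1$, which keeps $\prod_k(1+\delta_{k,1})^{-1}$ controlled: each factor is $1+\eta_k$ with $|\eta_k|\le \mathbf{u}/(1-\mathbf{u})$. The remaining factors are then handled by a first-order expansion. Either way, the linear term in $\mathbf{u}$ is at most $(3p-4)\mathbf{u}$.

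The main obstacle is the bookkeeping of the reciprocal factors introduced by the subtractions in the denominators. I expect to resolve it with the $(1+\delta)^{-1}$ bound under assumption 3, which is what makes the $O(\mathbf{u}^2)$ remainder legitimate. I will also note that no cancellation issue arises: subtracting two floating-point numbers has relative error at most $\mathbf{u}$ in the standard model, even when $\varepsilon_k\approx\varepsilon_j$.
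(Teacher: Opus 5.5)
Your proof is correct and follows essentially the same route as the paper. Both arguments write each computed quotient with one subtraction rounding and one division rounding, add the $p-2$ multiplication roundings, and apply \cite[Lemma 3.1]{Higham_book}, giving a first-order term of $(3p-4)\mathbf{u}\le 3p\mathbf{u}$. The one difference is bookkeeping, and it bears on the hypotheses. The paper merges each quotient's two roundings into a single $\mu_k$ with $|\mu_k|\le 2\mathbf{u}/(1-\mathbf{u})$, so assumption 3 is exactly the lemma's hypothesis for those $p-1$ factors. Your single application to all $3p-4$ factors instead needs $(3p-4)\mathbf{u}<1$, which assumption 3 does not imply once $p\ge 4$. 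Your split fallback is therefore the version the stated hypotheses actually license.
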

\begin{proof}
    For fixed $j$, let $y_k = \frac{\varepsilon_k}{\varepsilon_k-\varepsilon_j}$. If $\widehat{y}_k$ is the value of this quantity in floating-point arithmetic, then 
\begin{equation}
  \hat{y}_k = \frac{\varepsilon_k(1+\delta')}{(\varepsilon_k - \varepsilon_j)(1+\delta)} \; \; \; \;  \text{for some } \; \; \; |\delta|,|\delta'| \leq \textbf{u}. 
\end{equation}
Equivalently, $\widehat{y}_k = y_k \left( 1 + \mu_k \right)$ where $|\mu_k| = \left| \frac{\delta' - \delta}{1+\delta} \right| \leq \frac{2\textbf{u}}{1-\textbf{u}}$. Now ${\widehat{\alpha}}_j$ is just a floating-point product of the $\widehat{y}_k$. Assuming without loss of generality that this product can be computed from left to right without encountering over/underflow (the argument is the same for any ordering), the corresponding floating-point partial products $\widehat{\Pi}_i$ satisfy
\begin{equation}
    \aligned 
    \widehat{\Pi}_1 &= \widehat{y}_1 = y_1(1+\mu_1) \\
    \widehat{\Pi}_2 &= \widehat{y}_1\widehat{y}_2(1+\delta_1) = y_1y_2(1+\mu_1)(1+\mu_2)(1+\delta_1) \\
    \widehat{\Pi}_3 & = \widehat{\Pi}_2 \widehat{y}_3(1+\delta_2) = y_1y_2y_3(1+\mu_1)(1+\mu_2)(1+\mu_3)(1+\delta_1)(1+\delta_2) \\
    & \; \;  \vdots \\
    \endaligned
\end{equation}
and therefore
\begin{equation}
    \widehat{\alpha}_j = \widehat{\Pi}_{p-1} =  \alpha_j \cdot \prod_{k \neq j} (1+\mu_k) \cdot \prod_{k = 1}^{p-2}(1+\delta_k) ,
\end{equation}
where each $\delta_k$ satisfies $|\delta_k| \leq \textbf{u}$. Since $\frac{2(p-1)\textbf{u}}{1-\textbf{u}} < 1$, standard bounds -- e.g., \cite[Lemma 3.1]{Higham_book} -- imply
$\prod_{k \neq j}(1+\mu_k) = 1 + \theta_1$ and $\prod_{k=1}^{p-2}(1+\delta_i) = 1 + \theta_2$ for 
\begin{equation}\label{eqn: theta_bounds}
    |\theta_1| \leq 2p\textbf{u} + O(\textbf{u}^2) \; \; \; \text{and} \; \; \; |\theta_2| \leq p\textbf{u} + O(\textbf{u}^2).
\end{equation}
Hence,
\begin{equation}
    \frac{|\widehat{\alpha}_j-\alpha_j|}{|\alpha_j|} = |\theta_1 + \theta_2 + \theta_1\theta_2| \leq |\theta_1| + |\theta_2| + |\theta_1||\theta_2|.
\end{equation}
The result now follows by applying \eqref{eqn: theta_bounds}.
\end{proof}

We next prove a perturbation bound for \eqref{eqn: vandermonde_system_app}. 

\begin{prop}\label{prop: vandermonde_perturbation}
   Let $\varepsilon_1, \ldots, \varepsilon_p \in {\mathbb C}$ be distinct and nonzero and define
   \[ \text{\normalfont gap}_{\varepsilon} = \min_{i \neq j} |\varepsilon_i-\varepsilon_j| \; \; \;  \text{and} \; \; \; \;   M_{\varepsilon} = \max_{j} |\varepsilon_j|.\]
   Suppose that $\tilde{\varepsilon}_1$, \ldots, $\tilde{\varepsilon}_p$ satisfy \[\tilde{\varepsilon}_j = \varepsilon_j(1+\delta_j) \; \; \text{for} \; \; |\delta_j| \leq \delta, \] 
   where $\delta > 0$ is the ``size" of the relative perturbation. Let $(\alpha_1, \ldots, \alpha_p)$ and $(\tilde{\alpha}_1, \ldots, \tilde{\alpha}_p)$ be the exact-arithmetic solutions to the Vandermonde system \eqref{eqn: vandermonde_system_app} at the points $\varepsilon_1, \ldots, \varepsilon_p$ and $\tilde{\varepsilon}_1, \ldots, \tilde{\varepsilon}_p$, respectively.
   If $2(p-1)M_{\varepsilon}\delta < \text{\normalfont gap}_{\varepsilon}-2M_{\varepsilon} \delta$, then
    \[ \frac{|\tilde{\alpha}_j- \alpha_j|}{|\alpha_j|} \leq \frac{2(p-1)M_{\varepsilon} \delta}{\text{\normalfont gap}_{\varepsilon} -2pM_{\varepsilon}\delta}.  \]
\end{prop}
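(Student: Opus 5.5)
We will work directly from the closed form $\alpha_j = \prod_{k\neq j} \frac{\varepsilon_k}{\varepsilon_k-\varepsilon_j}$ and write the ratio as
\[
\frac{\tilde\alpha_j}{\alpha_j} = \prod_{k\neq j} \frac{\tilde\varepsilon_k}{\varepsilon_k}\cdot\frac{\varepsilon_k-\varepsilon_j}{\tilde\varepsilon_k-\tilde\varepsilon_j} = \prod_{k\neq j}(1+\delta_k)\,\frac{\varepsilon_k-\varepsilon_j}{\tilde\varepsilon_k-\tilde\varepsilon_j}.
\]
The plan is to bound each factor relative to one and then control the product. Alternatively, and more cleanly for the stated form, we can reorganize each factor as a single quotient:
\[
\frac{\tilde\varepsilon_k(\varepsilon_k-\varepsilon_j)}{\varepsilon_k(\tilde\varepsilon_k-\tilde\varepsilon_j)} = 1 + \frac{\tilde\varepsilon_k(\varepsilon_k-\varepsilon_j) - \varepsilon_k(\tilde\varepsilon_k-\tilde\varepsilon_j)}{\varepsilon_k(\tilde\varepsilon_k-\tilde\varepsilon_j)} = 1 + \frac{\varepsilon_j(\delta_j-\delta_k)}{\tilde\varepsilon_k-\tilde\varepsilon_j}.
\]
The middle numerator simplifies nicely: $\tilde\varepsilon_k\varepsilon_k - \tilde\varepsilon_k\varepsilon_j - \varepsilon_k\tilde\varepsilon_k + \varepsilon_k\tilde\varepsilon_j = \varepsilon_k\varepsilon_j(\delta_j - \delta_k)$, and dividing by $\varepsilon_k$ gives the expression above.

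The next step bounds each factor. The numerator satisfies $|\varepsilon_j(\delta_j-\delta_k)|\le 2M_\varepsilon\delta$. For the denominator, we have $|\tilde\varepsilon_k-\tilde\varepsilon_j|\ge |\varepsilon_k-\varepsilon_j| - |\varepsilon_k\delta_k| - |\varepsilon_j\delta_j| \ge \text{gap}_\varepsilon - 2M_\varepsilon\delta$, which is positive by the hypothesis. So each factor has the form $1+\eta_k$ with
\[
|\eta_k|\le \frac{2M_\varepsilon\delta}{\text{gap}_\varepsilon-2M_\varepsilon\delta} =: x.
\]

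The main obstacle is converting the product of $p-1$ such factors into the clean first-order-style bound claimed, with the specific denominator $\text{gap}_\varepsilon - 2pM_\varepsilon\delta$. We first try the standard lemma (cf.\ \cite[Lemma 3.1]{Higham_book}): $\left|\prod_{k\neq j}(1+\eta_k) - 1\right| \le (1+x)^{p-1}-1 \le \frac{(p-1)x}{1-(p-1)x}$ whenever $(p-1)x<1$. Substituting $x$ gives
\[
\frac{(p-1)x}{1-(p-1)x} = \frac{2(p-1)M_\varepsilon\delta}{\text{gap}_\varepsilon - 2M_\varepsilon\delta - 2(p-1)M_\varepsilon\delta} = \frac{2(p-1)M_\varepsilon\delta}{\text{gap}_\varepsilon - 2pM_\varepsilon\delta},
\]
which is exactly the claimed bound. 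The hypothesis $2(p-1)M_\varepsilon\delta < \text{gap}_\varepsilon - 2M_\varepsilon\delta$ is precisely the condition $(p-1)x<1$, so the lemma applies. The remaining check is the inequality $(1+x)^{n}-1\le nx/(1-nx)$, which follows from $(1+x)^n\le e^{nx}\le 1/(1-nx)$.
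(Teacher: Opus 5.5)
Your proof is correct and follows the paper's argument essentially step for step. The paper also writes $\tilde y_k = y_k(1+\mu_k)$ with exactly your $\mu_k = \varepsilon_j(\delta_j-\delta_k)/(\tilde\varepsilon_k-\tilde\varepsilon_j)$, bounds $|\mu_k| \le 2M_\varepsilon\delta/(\text{gap}_\varepsilon - 2M_\varepsilon\delta)$ the same way, and then cites Higham's Lemma 3.1 for the product of the $p-1$ factors. The one difference is that you prove the product bound directly via $\prod(1+|\eta_k|)-1 \le (1+x)^{p-1}-1 \le (p-1)x/(1-(p-1)x)$, which covers complex $\mu_k$, whereas the cited lemma is stated for real perturbations.
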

\begin{proof}
    Let $y_k = \frac{\varepsilon_k}{\varepsilon_k-\varepsilon_j}$ and $\tilde{y}_k = \frac{\tilde{\varepsilon}_k}{\tilde{\varepsilon}_k - \tilde{\varepsilon}_j}$. We have
    \begin{equation}
        \tilde{y}_k = \frac{\varepsilon_k (1+\delta_k)}{\varepsilon_k(1+\delta_k)-\varepsilon_j(1+\delta_j)} = \frac{\varepsilon_k}{\varepsilon_k-\varepsilon_j} \cdot \frac{1+\delta_k}{1 + \frac{\varepsilon_k \delta_k - \varepsilon_j \delta_j}{\varepsilon_k - \varepsilon_j}} = y_k \left( 1 + \frac{\varepsilon_j(\delta_j-\delta_k)}{\varepsilon_k-\varepsilon_j + \varepsilon_k\delta_k - \varepsilon_j\delta_j}\right),
    \end{equation}
    where $\mu_k = \frac{\varepsilon_j(\delta_j-\delta_k)}{\varepsilon_k-\varepsilon_j+\varepsilon_k\delta_k - \varepsilon_j\delta_j}$ satisfies $|\mu_k| \leq \frac{2 M_{\varepsilon}\delta}{\text{gap}_{\varepsilon}-2M_{\varepsilon}\delta}$. Hence,
    \begin{equation}
        \tilde{\alpha}_j = \prod_{k \neq j}\tilde{y}_k = \prod_{k \neq j}y_k(1+\mu_k) = \alpha_j \prod_{k \neq j}(1+\mu_k).
    \end{equation}
    Assuming $\frac{2(p-1)M_{\varepsilon}\delta}{\text{gap}_{\varepsilon}-2M_{\varepsilon}\delta} < 1$ and again applying \cite[Lemma 3.1]{Higham_book}, we can rewrite $\prod_{k \neq j}(1+\mu_k)$ as follows:
    \begin{equation}
        \prod_{k \neq j} (1 + \mu_k) = 1 + \theta \; \; \;  \text{for} \; \; \; |\theta| \leq \frac{2(p-1)M_{\varepsilon} \delta}{\text{gap}_{\varepsilon} - 2pM_{\varepsilon}\delta}.
    \end{equation}
    Inserting this above and rearranging yields the bound. 
\end{proof}

\subsection{Perturbation Bound for Bilinear Algorithms}
Finally, we prove a perturbation bound for evaluating a recursive, alternative-basis bilinear algorithm in exact arithmetic. 
\begin{prop}\label{prop: evaluating an approximate bilinear algorithm}
    Let $U_{\phi} \in {\mathbb C}^{m_0 k_0 \times t}$, $V_{\psi} \in {\mathbb C}^{k_0n_0 \times t}$, $W_{\nu} \in {\mathbb C}^{m_0n_0 \times t}$, $\phi \in {\mathbb C}^{m_0k_0 \times m_0k_0}$, $\psi \in {\mathbb C}^{k_0n_0 \times k_0n_0}$, and $\nu^{-1} \in {\mathbb C}^{m_0n_0 \times m_0n_0}$ denote the encoding matrices and basis transformations defining an alternative basis bilinear map
    \[ \langle \cdot , \cdot \rangle: {\mathbb C}^{m_0 \times k_0} \times {\mathbb C}^{k_0 \times n_0} \rightarrow {\mathbb C}^{m_0 \times n_0} \]
    evaluated as
    \[ c = \nu^{-1} \left[ W_{\nu}\left(U_{\phi}^T(\phi^Ta) \odot V_{\psi}^T(\psi^Tb) \right) \right], \]
    where $a$ and $b$ are vectorized copies of input matrices $A \in {\mathbb C}^{m_0 \times k_0}$ and $B \in {\mathbb C}^{k_0 \times n_0}$ and $c$ is a vectorized copy of the corresponding output $ \langle A, B \rangle = C \in {\mathbb C}^{m_0 \times n_0}$. 
    Suppose that a corresponding ``perturbed" bilinear form is defined by encoding matrices  $\widetilde{U}_{\phi} \in {\mathbb C}^{m_0k_0 \times t}$, $\widetilde{V}_{\psi} \in {\mathbb C}^{k_0n_0 \times t}$, and $\widetilde{W}_{\nu} \in {\mathbb C}^{m_0n_0 \times t}$ and basis transformations $\tilde{\phi} \in {\mathbb C}^{m_0k_0 \times m_0k_0}$, $\tilde{\psi} \in {\mathbb C}^{k_0n_0 \times k_0n_0}$, and $\tilde{\nu}^{-1} \in {\mathbb C}^{m_0n_0 \times m_0n_0}$, which are obtained by applying entry-wise, relative perturbations to $U_{\phi}$, $V_{\psi}$, $W_{\nu}$, $\phi$, $\psi$, and $\nu^{-1}$, respectively, of size at most $\delta > 0$ \text{\normalfont (}e.g., $\widetilde{U}_{\phi}(i,j) = U_{\phi}(i,j)(1+\Delta_{ij})$ for some $|\Delta_{ij}| \leq \delta$\text{\normalfont )}. Let $C, \widetilde{C} \in {\mathbb C}^{m \times n}$ be the result of evaluating recursive versions of these bilinear algorithms at input matrices $A \in {\mathbb C}^{m \times k}$ and $B \in {\mathbb C}^{k \times n}$ in exact arithmetic, where we assume that both default to matrix multiplication after $L$ recursive steps for $L \in {\mathbb N}$ satisfying 
    \[ L \leq \min \left\{  \log_{m_0}(m), \log_{n_0}(n), \log_{k_0}(k) \right\}.\]
    In this case,
    \[ \|C - \widetilde{C}\| \leq \left( (1+\delta)^{6L} - 1 \right)(k/k_0^L)E^L \|A\|\|B\|\]
    where $\| \cdot \|$ is the maximum-entry norm and $E$ is the stability factor corresponding to $U_{\phi}$, $V_{\psi}$, $W_{\nu}$, $\phi$, $\psi$, and $\nu^{-1}$ {\normalfont (}see \cref{defn:stability_factor} {\normalfont )}.
\end{prop}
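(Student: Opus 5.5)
We argue by induction on the number of recursive steps $L$, working with a single level of the recursion at a time. The central observation is that, in exact arithmetic, one level of the alternative basis algorithm writes each output block as a fixed multilinear expression in the six matrices $\nu^{-1}$, $W_{\nu}$, $U_{\phi}$, $\phi$, $V_{\psi}$, $\psi$. Explicitly, every block of $C$ (indexed by $r$) has the form
\[
C_r=\sum_{s}\nu^{-1}_{r s'}\,(W_{\nu})_{s' \ell}\,\Bigl\langle \textstyle\sum_{i} (U_{\phi})_{i'\ell}\,\phi_{i i'}\,A_i ,\; \sum_{j} (V_{\psi})_{j'\ell}\,\psi_{j j'}\,B_j\Bigr\rangle_{L-1},
\]
with repeated primed indices summed. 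Here $\langle\cdot,\cdot\rangle_{L-1}$ denotes the recursive algorithm with $L-1$ remaining levels. The perturbed algorithm $\widetilde{C}$ has the same form, with every coefficient multiplied by a factor $(1+\Delta)$ satisfying $|\Delta|\le\delta$, and with $\langle\cdot,\cdot\rangle_{L-1}$ replaced by its perturbed version.

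\textbf{Base case and induction hypothesis.} For $L=0$ both algorithms are classical multiplication, so $C=\widetilde C$ and the bound (with value $0$) holds trivially. For the inductive step, we take as hypothesis the bound $\|\langle X,Y\rangle_{L-1}-\widetilde{\langle X,Y\rangle}_{L-1}\|\le((1+\delta)^{6(L-1)}-1)(k'/k_0^{L-1})E^{L-1}\|X\|\|Y\|$, where $k'=k/k_0$. Alongside it we use the exact-arithmetic norm bound \eqref{eqn: output_norm_bound}, namely $\|\langle X,Y\rangle_{L-1}\|\le (k'/k_0^{L-1})E^{L-1}\|X\|\|Y\|$.

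\textbf{Splitting the error.} Write $\widetilde X_\ell$ and $\widetilde Y_\ell$ for the perturbed block inputs. Each coefficient product appearing in them carries two relative factors, so $|\widetilde X_\ell - X_\ell|\le((1+\delta)^2-1)\sum|\phi||U_\phi|\,\|A\|$ entrywise, and similarly for $Y_\ell$. We then decompose
\[
\widetilde{\langle \widetilde X,\widetilde Y\rangle}-\langle X,Y\rangle=\bigl(\widetilde{\langle \widetilde X,\widetilde Y\rangle}-\langle \widetilde X,\widetilde Y\rangle\bigr)+\bigl(\langle \widetilde X,\widetilde Y\rangle-\langle X,Y\rangle\bigr).
\]
The first bracket is controlled by the induction hypothesis, using $\|\widetilde X_\ell\|\le (1+\delta)^2 a_\ell\|A\|$ and $\|\widetilde Y_\ell\|\le(1+\delta)^2 b_\ell\|B\|$, where $a$ and $b$ are the vectors in \cref{defn:stability_factor}. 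The second bracket is controlled by bilinearity together with the norm bound. Finally, the outer coefficients $\nu^{-1}$ and $W_\nu$ contribute a further factor $(1+\delta)^2$ and a weighting by $|\nu^{-1}||W_\nu|$. Summing against these weights reproduces $e_\varepsilon$, and hence produces a factor $E$.

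\textbf{Combining the bounds.} The resulting recurrence for the relative error factor $\eta_L$ (normalised by $(k/k_0^L)E^L\|A\|\|B\|$) is
\[
1+\eta_L\le(1+\delta)^2\bigl[(1+\delta)^4(1+\eta_{L-1})\bigr],
\]
where the inner $(1+\delta)^4$ collects the two input factors $(1+\delta)^2$. This is the scalar identity $|\tilde x\tilde y\,\tilde g-xyg|\le((1+\delta)^4(1+\eta)-1)|x||y|$ applied to the products involved. It gives $1+\eta_L\le(1+\delta)^{6}(1+\eta_{L-1})$, and therefore $\eta_L\le(1+\delta)^{6L}-1$, which is exactly the claimed bound.

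\textbf{Main obstacle.} The delicate step is the bookkeeping that justifies bounding the three perturbation sources multiplicatively rather than additively. To do this, one must check that the weights $|\phi||U_\phi|$ and $|\psi||V_\psi|$ appear with absolute values inside the sums, so that cross terms cannot exceed the product $(1+\delta)^6$. The cleanest way I would handle this is to prove the stronger entrywise statement $|\widetilde{C}-C|\le((1+\delta)^{6L}-1)\cdot(\text{the absolute-value expansion})$, and only afterwards take the max-norm via \eqref{eqn: output_norm_bound}.
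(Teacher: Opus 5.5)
Your proposal is correct and is essentially the paper's argument. The paper also inducts on $L$ and introduces a hybrid output that uses the perturbed coefficients only at the top level; its recursive calls are exactly your intermediate term $\langle \widetilde X,\widetilde Y\rangle$, the recursive-call discrepancy is bounded by the induction hypothesis times the perturbed stability factor ($\le (1+\delta)^6E$), and the top-level discrepancy by bilinearity plus \eqref{eqn: output_norm_bound} (giving $(1+\delta)^6-1$), which sum to $(1+\delta)^{6L}-1$. The differences are cosmetic: you start the induction at $L=0$ rather than $L=1$ and group the outer-coefficient cross terms slightly differently. One step you take for granted is that your level-by-level form agrees, in exact arithmetic, with the transform-first evaluation of \cref{alg:alt_basis_APA}; the paper checks this explicitly in a paragraph.
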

\begin{proof}
    We prove the result via induction on $L$, starting with the inductive step. Assume that the given bound holds for $L-1$ and consider $L \geq 2$. To evaluate our bilinear forms, we partition the input matrices into blocks as follows (assuming $m_0$, $k_0$, and $n_0$ divide $m$, $k$, and $n$):
    \begin{equation}
        A = \begin{pmatrix} A_{11} & \cdots & A_{1k_0} \\
        \vdots & & \vdots \\
        A_{m_01} & \cdots & A_{m_0k_0} \end{pmatrix} \quad \text{and} \quad B = \begin{pmatrix} 
        B_{11} & \cdots & B_{1n_0} \\
        \vdots & & \vdots \\
        B_{k_01} & \cdots & B_{k_0n_0}
        \end{pmatrix}.
    \end{equation}
    Each $A_{ij}$ is $(m/m_0) \times (k/k_0)$ while each $B_{ij}$ is $(k/k_0)$ by $(n/n_0)$. For this block partition, let ${\mathfrak a}$ and ${\mathfrak b}$ be \textit{block} vectorized copies of $A$ and $B$ -- e.g., 
    \begin{equation}
        {\mathfrak a} = \begin{pmatrix} A_{11} \\  A_{12} \\ \vdots \\  A_{m_0k_0} \end{pmatrix}  \in {\mathbb C}^{mk_0 \times (k/k_0)}\quad \text{and} \quad {\mathfrak b} =  \begin{pmatrix} B_{11} \\ B_{12}\\  \vdots \\ B_{k_0n_0} \end{pmatrix} \in {\mathbb C}^{kn_0 \times (n/n_0)}.
    \end{equation}
    Additionally, let $\widehat{C}$ be the output of a hybrid, recursive bilinear algorithm applied to $A$ and $B$, which uses $\widetilde{U}_{\phi}$, $\widetilde{V}_{\psi}$,  $\widetilde{W}_{\nu}$, $\tilde{\phi}$, $\tilde{\psi}$, and $\tilde{\nu}^{-1}$ at the highest level and $U_{\phi}$, $V_{\psi}$, $W_{\nu}$, $\phi$, $\psi$, $\nu^{-1}$ thereafter. In this case, we have 
    \begin{equation}\label{eqn: c_hat_triangle}
        \|C - \widetilde{C}\| \leq \| C - \widehat{C}\| + \|\widehat{C} - \widetilde{C} \|.
    \end{equation}
    To bound the second term in \eqref{eqn: c_hat_triangle}, let $\hat{{\mathfrak c}}$ be a block-vectorized version of $\widehat{C}$, which can be written as
    \begin{equation}\label{eqn: block_c_hat}
        \hat{\mathfrak{c}} = \left( \tilde{\nu}^{-1} \otimes I_{m/m_0} \right)\left( \widetilde{W}_{\nu} \otimes I_{m/m_0} \right) \begin{pmatrix} 
        \widetilde{M}_1 \\
        \vdots \\
        \widetilde{M}_t \end{pmatrix} 
    \end{equation}
    for $\widetilde{M}_j = \langle (\widetilde{U}_{\phi}(:,j)^T \otimes I_{m/m_0})( \tilde{\phi}^T \otimes I_{m/m_0}) \mathfrak{a},( \widetilde{V}_{\psi}(:,j)^T \otimes I_{k/k_0}) ( \tilde{\psi}^T \otimes I_{k/k_0} ) \mathfrak{b} \rangle_{L-1} $, where  $\langle \cdot , \cdot \rangle_{L-1}$ is the bilinear map obtained by applying $\langle \cdot, \cdot \rangle$ recursively $L-1$ times (and again defaulting to matrix multiplication at that point). To obtain \eqref{eqn: block_c_hat}, we exploit the fact that, in exact arithmetic, this expression is equivalent to evaluating the perturbed alternative basis algorithm as done in \cref{section: stability} -- e.g., where the basis transformations are applied recursively to $A$ and $B$ first before evaluating the bilinear algorithm corresponding to $\widetilde{U}_{\phi}$, $\widetilde{V}_{\psi}$, and $\widetilde{W}_{\nu}$ and finally applying $\tilde{\nu}^{-1}$. This is implied by the following observation. Suppose that 
   \begin{equation}
       \left( \tilde{\phi}^T \otimes I_{m/m_0} \right) \mathfrak{a} = \mathfrak{a}' = \begin{pmatrix} A_{11}' \\ A_{12}' \\
       \vdots \\
       A_{m_0k_0}'
       \end{pmatrix}
   \end{equation}
   for some $(m/m_0) \times (k/k_0)$ matrices $A_{11}', \ldots, A_{m_0k_0}'$. In this case, the left input to $\langle \cdot , \cdot \rangle_{L-1}$ in block row $j$ of \eqref{eqn: block_c_hat} is 
   \begin{equation}
       A_j \coloneqq \left( \widetilde{U}_{\phi}(:,j)^T \otimes I_{m/m_0} \right) \mathfrak{a'} = \widetilde{U}_{\phi}(1,j) A_{11}' + \cdots + \widetilde{U}_{\phi}(m_0k_0,j) A_{m_0k_0}' \in {\mathbb C}^{(m/m_0) \times (k/k_0)}.
   \end{equation}
  The subsequent recursive call begins by multiplying $\mathfrak{a}_j$, the block-vectorized copy of $A_j$ obtained by breaking it into submatrices of size $(m/m_0^2) \times (k/k_0^2)$, by $(\tilde{\phi}^T \otimes I_{m/m_0^2})$. If $\mathfrak{a}_{11}', \ldots, \mathfrak{a}_{m_0k_0}'$ are similarly block-vectorized copies of $A_{11}', \ldots, A_{m_0k_0}'$, then 
   \begin{equation}
       \mathfrak{a}_j = \widetilde{U}_{\phi}(1,j) \mathfrak{a}_{11}' + \cdots + \widetilde{U}_{\phi}(m_0k_0,j) \mathfrak{a}_{m_0k_0}'
   \end{equation}
   and therefore
   \begin{equation}
        \aligned 
       \left( \tilde{\phi}^T \otimes I_{m/m_0^2} \right)  \mathfrak{a}_j &= \widetilde{U}_{\phi}(1,j) \left( \tilde{\phi}^T \otimes I_{m/m_0^2} \right) \mathfrak{a}_{11}' + \cdots + \widetilde{U}_{\phi}(m_0k_0,j) \left( \tilde{\phi}^T \otimes I_{m/m_0^2} \right) \mathfrak{a}_{m_0k_0}' \\
       & = \left( \widetilde{U}_{\phi}(:,j)^T \otimes I_{m/m_0} \right) \left( I_{m_0k_0} \otimes \left( \tilde{\phi}^T \otimes I_{m/m_0^2} \right) \right) \begin{pmatrix} \mathfrak{a}_{11}' \\
       \vdots \\
       \mathfrak{a}_{m_0k_0}'
       \end{pmatrix},
       \endaligned
   \end{equation}
   where $\left( I_{m_0k_0} \otimes \left( \tilde{\phi}^T \otimes I_{m/m_0^2} \right)\right) \begin{pmatrix} \mathfrak{a}_{11}' \\
       \vdots \\
       \mathfrak{a}_{m_0k_0}'
       \end{pmatrix}$ is exactly what we get if we break $A$ into $(m/m_0^2) \times (k/k_0^2)$ blocks and multiply the corresponding block vectorization by $((\tilde{\phi}^{\otimes 2})^T \otimes I_{m/m_0^2})$. Continuing recursively -- and applying the same argument to the other encoding matrices/basis transformations -- implies the claimed equivalence. \\
   \indent We now continue bounding $\|\widehat{C} - \widetilde{C}\|$. By our induction hypothesis, the block-vectorized copy of $\widetilde{C}$ satisfies 
    \begin{equation}
        \tilde{\mathfrak{c}} = \left( \tilde{\nu}^{-1} \otimes I_{m/m_0} \right)\left(\widetilde{W}_{\nu} \otimes I_{m/m_0} \right) \begin{pmatrix}
            \widetilde{M}_1 + E_1 \\ \vdots \\ \widetilde{M}_t + E_t
        \end{pmatrix},
    \end{equation}
    for $E_1, \ldots, E_t$ with 
    \begin{equation}
        \resizebox{.9\textwidth}{!}{
        $\displaystyle
        \|E_j\| \leq \left((1+\delta)^{6(L-1)} - 1 \right)(k/k_0^L)E^{L-1} \left\| \left(\widetilde{U}_{\phi}(:,j)^T \otimes I_{m/m_0}\right) \left( \tilde{\phi}^T \otimes I_{m/m_0}  \right) \mathfrak{a} \right\| \left\|\left( \widetilde{V}_{\psi}(:,j)^T \otimes I_{k/k_0} \right) \left( \tilde{\psi}^T \otimes I_{k/k_0} \right) \mathfrak{b} \right\|.
        $}
        \end{equation}
    Accordingly, we have
    \begin{equation}\label{eqn: appendix_term_one}
        \resizebox{.9\textwidth}{!}{$\displaystyle
        \aligned 
         \| &\widehat{C}  - \widetilde{C}\| = \| \hat{\mathfrak{c}} - \tilde{\mathfrak{c}}\| = \left\| \left(\tilde{\nu}^{-1} \otimes I_{m/m_0} \right)\left( \widetilde{W}_{\nu} \otimes I_{m/m_0} \right) \begin{pmatrix} E_1 \\ \vdots \\ E_t \end{pmatrix} \right\|  \leq \left\| |\tilde{\nu}^{-1}| \cdot |\widetilde{W}_{\nu}|  \begin{pmatrix} \|E_1\| \\
        \vdots \\
        \| E_t \| \end{pmatrix} \right\|_{\infty} \\
        & \leq \left( (1+\delta)^{6(L-1)} - 1\right) (k/k_0^L) E^{L-1} \left\| | \tilde{\nu}^{-1}| \cdot |\widetilde{W}_{\nu}| \begin{pmatrix} \left(\sum_{i=1}^{m_0k_0} |\widetilde{U}_{\phi}(i,1) | \|\tilde{\phi}(:,i)\|_1 \right) \left( \sum_{i=1}^{k_0n_0} | \widetilde{V}_{\psi}(i,1) | \| \tilde{\psi}(:,i) \|_1 \right) \\
        \vdots \\
        \left(\sum_{i=1}^{m_0k_0} |\widetilde{U}_{\phi}(i,t) | \|\tilde{\phi}(:,i) \|_1 \right)\left( \sum_{i=1}^{k_0n_0} | \widetilde{V}_{\psi}(i,t) | \| \tilde{\psi}(:,i)\|_1\right)
        \end{pmatrix} \right\|_{\infty} \|A\| \|B\| \\
        & \leq \left( (1+\delta)^{6(L-1)} - 1\right) (k/k_0^L) E^{L-1} \left\| \left( | \tilde{\nu}^{-1} |\cdot |\widetilde{W}_{\nu}|\right) \left( (|\tilde{\phi}| \cdot |
        \widetilde{U}_{\phi}|)^T\mathds{1} \odot (|\tilde{\psi}| \cdot |\widetilde{V}_{\psi}|)^T\mathds{1} \right) \right\|_{\infty} \|A\| \|B\| \\
        & \leq \left( (1+\delta)^{6L} - (1+\delta)^6\right) (k/k_0^L) E^L \|A\| \|B\|
        \endaligned 
        $}
    \end{equation}
    for $\mathds{1}$ the vector of all ones. Here, we note that $ \| ( | \tilde{\nu}^{-1} |\cdot |\widetilde{W}_{\nu}|) ( (|\tilde{\phi}| \cdot |
        \widetilde{U}_{\phi}|)^T\mathds{1} \odot (|\tilde{\psi}| \cdot |\widetilde{V}_{\psi}|)^T\mathds{1} ) \|_{\infty}$ is the stability factor corresponding to $\widetilde{U}_{\phi}$, $\widetilde{V}_{\psi}$, $\widetilde{W}_{\nu}$, $\tilde{\phi}$, $\tilde{\psi}$ and $\tilde{\nu}^{-1}$, which is at most $(1+\delta)^6 E$. \\
        \indent To similarly bound the first term of \eqref{eqn: c_hat_triangle}, we write
    \begin{equation}
      \mathfrak{c} = \left(\nu^{-1} \otimes I_{m/m_0} \right) \left( W_{\nu} \otimes I_{m/m_0} \right) \begin{pmatrix}
          M_1 \\ \vdots \\ M_t 
      \end{pmatrix} 
    \end{equation}
    for $M_j = \left\langle \left(U_{\phi}(:,j)^T \otimes I_{m/m_0} \right)  \left( \phi^T \otimes I_{m/m_0} \right) \mathfrak{a}, \left( V_{\psi}(:,j)^T \otimes I_{k/k_0} \right) \left( \psi^T \otimes I_{k/k_0} \right) \mathfrak{b} \right\rangle_{L-1}$. 
    Since $\langle \cdot, \cdot \rangle_{L-1}$ is bilinear, we can express $M_j - \widetilde{M}_j$ as
    \begin{equation}\label{eqn: ugly_indexing}
        \resizebox{.91\textwidth}{!}{$\displaystyle
        \sum_{\substack{1 \leq i_1 \leq m_0k_0 \\ 1 \leq i_2 \leq k_0n_0}} \left[ \left( \sum_{i_1' = 1}^{m_0k_0} U_{\phi} (i_1',j) \phi(i_1,i_1') \right)
        \left( \sum_{i_2' = 1}^{k_0n_0} V_\psi(i_2',j) \psi(i_2,i_2') \right) - \left( \sum_{i_1' = 1}^{m_0k_0} \widetilde{U}_{\phi} (i_1',j) \tilde{\phi}(i_1,i_1') \right)
        \left( \sum_{i_2' = 1}^{k_0n_0} \widetilde{V}_\psi(i_2',j) \tilde{\psi}(i_2,i_2') \right) \right]\langle \mathfrak{a}(i_1), \mathfrak{b}(i_2) \rangle_{L-1},
        $}
    \end{equation}
    for any $1 \leq j \leq t$, where $\mathfrak{a}(i_1)$ and $\mathfrak{b}(i_2)$
    are $(m/m_0) \times (k/k_0)$ and $(k/k_0) \times (n/n_0)$ subblocks of $\mathfrak{a}$ and $\mathfrak{b}$ with block row indices $i_1$ and $i_2$, respectively. Now by \eqref{eqn: output_norm_bound} 
    \begin{equation}\label{eqn: sub-call bound}
       \| \langle \mathfrak{a}(i_1), \mathfrak{b}(i_2) \rangle_{L-1} \| \leq (k/k_0^L)E^{L-1} \|A\|\|B\|, 
    \end{equation}
    so we have (combining the sums in \eqref{eqn: ugly_indexing})
    \begin{equation}\label{eqn:correct ref}
        \resizebox{.9\textwidth}{!}{$\displaystyle
        \begin{split}
        \| M_j - \widetilde{M}_j\| & \leq   (k/k_0^L)E^{L-1} \|A\| \|B\| \sum_{\substack{1 \leq i_1,i_1' \leq m_0k_0 \\ 1 \leq i_2,i_2' \leq k_0n_0}} \left| U_{\phi}(i_1',j)\phi(i_1,i_1')V_{\psi}(i_2',j)\psi(i_2,i_2') - \widetilde{U}_{\phi}(i_1',j)  \tilde{\phi}(i_1,i_1')\widetilde{V}_{\psi}(i_2',j) \tilde{\psi}(i_2,i_2') \right| \\
        & \leq ((1+\delta)^4 - 1) (k/k_0^L)E^{L-1} \|A\| \|B\| \sum_{\substack{1 \leq i_1,i_1' \leq m_0k_0 \\ 1 \leq i_2,i_2' \leq k_0n_0}} \left| U_{\phi}(i_1',j)\phi(i_1,i_1')V_{\psi}(i_2',j)\psi(i_2,i_2') \right| \\
        &  = ((1+\delta)^4 - 1) (k/k_0^L) E^{L-1} \|A\| \|B\| \sum_{\substack{1 \leq i_1' \leq m_0k_0 \\ 1 \leq i_2' \leq k_0n_0}} |U_{\phi}(i_1',j) | \|\phi(:,i_1')\|_1 |V_{\psi}(i_2',j)| \|\psi(:,i_2')\|_1  \\
        &  = ((1+\delta)^4 - 1) (k/k_0^L) E^{L-1} \|A\| \|B\| \left( ( |\phi| \cdot |U_{\phi}|)^T \mathds{1} \odot (|\psi| \cdot |V_{\psi}|)^T \mathds {1} \right)_{j}
        \end{split}
        $}
    \end{equation}
    for $( ( |\phi| \cdot |U_{\phi}|)^T \mathds{1} \odot (|\psi| \cdot |V_{\psi}|)^T \mathds {1})_{j}$ the $j$-th entry of $(( |\phi| \cdot |U_{\phi}|)^T \mathds{1} \odot (|\psi| \cdot |V_{\psi}|)^T \mathds {1})$. 
    In this case, we have
    \begin{equation}
        \aligned 
        \| \mathfrak{c} - \hat{\mathfrak{c}} \| &= \left\|\left( \nu^{-1} \otimes I_{m/m_0} \right) \left(W_{\nu} \otimes I_{m/m_0}\right)\begin{pmatrix} M_1 - \widetilde{M}_1 \\ 
        \vdots \\
        M_t - \widetilde{M}_t \end{pmatrix} + \Delta W \begin{pmatrix} \widetilde{M}_1 \\ \vdots \\ \widetilde{M}_t \end{pmatrix}\right\| \\
        &\leq \left\| \left( \nu^{-1} \otimes I_{m/m_0} \right) \left( W_{\nu} \otimes I_{m/m_0} \right)\begin{pmatrix} M_1 - \widetilde{M}_1 \\ 
        \vdots \\
        M_t  - \widetilde{M}_t \end{pmatrix}  \right\| + \left\| \Delta W \begin{pmatrix} \widetilde{M}_1 \\ \vdots \\ \widetilde{M}_t \end{pmatrix}\right\|,
        \endaligned
    \end{equation}
    where $\Delta W = (\nu^{-1} \otimes I_{m/m_0})(W_{\nu} \otimes I_{m/m_0}) - (\tilde{\nu}^{-1} \otimes I_{m/m_0})(\widetilde{W}_{\nu} \otimes I_{m/m_0})$.  Applying \eqref{eqn:correct ref} to the first term here yields
    \begin{equation}\label{eqn: part_two_part_one}
        \aligned
        \left\| \left( \nu^{-1} \otimes I_{m/m_0} \right) \left( W_{\nu} \otimes I_{m/m_0}\right) \begin{pmatrix} M_1 - \widetilde{M}_1 \\ 
        \vdots \\
        M_t  - \widetilde{M}_t \end{pmatrix}  \right\| & \leq \left\|\left(  |\nu^{-1}| \cdot |W_{\nu}| \right)\begin{pmatrix} \left\| M_1 - \widetilde{M}_1 \right\| \\ \vdots \\
        \left\| M_t - \widetilde{M}_t \right\| \end{pmatrix} \right\|_{\infty}  \\
        & \hspace{-5cm}\leq \left( (1+\delta)^4 - 1 \right) (k/k_0^L) \left\| \left( | \nu^{-1} |\cdot |W_{\nu}|\right) \left( (|\phi| \cdot |
        U_{\phi}|)^T\mathds{1} \odot (|\psi| \cdot |V_{\psi}|)^T\mathds{1} \right) \right\|_{\infty}  E^{L-1} \|A\|\|B\| \\
        & \hspace{-5cm} = \left((1+\delta)^4-1 \right) (k/k_0^L) E^L \|A\|\|B\|.
        \endaligned 
    \end{equation}
    Similarly,
    \begin{equation}\label{eqn: part_two_part_two}
        \resizebox{.9\textwidth}{!}{$\displaystyle
        \aligned 
        \left\|  \Delta W  \begin{pmatrix} \widetilde{M}_1 \\
        \vdots \\
        \widetilde{M}_t \end{pmatrix} \right\| &\leq \left\||\Delta W| \begin{pmatrix} \|\widetilde{M}_1\| \\
        \vdots \\
        \| \widetilde{M}_t \| \end{pmatrix}
        \right\|_{\infty} \\
        & \hspace{-2cm} \leq (2\delta + \delta^2) \left\|\left(|\nu^{-1} | \cdot |W_{\nu}| \right) \begin{pmatrix} \|M_1\| + ((1+\delta)^4 - 1) (k/k_0^L) E^{L-1} \|A\| \|B\| \left( ( |\phi| \cdot |U_{\phi}|)^T \mathds{1} \odot (|\psi| \cdot |V_{\psi}|)^T \mathds {1} \right)_{1}\\
        \vdots \\
        \|M_t\| + ((1+\delta)^4 - 1) (k/k_0^L) E^{L-1} \|A\| \|B\| \left( ( |\phi| \cdot |U_{\phi}|)^T \mathds{1} \odot (|\psi| \cdot |V_{\psi}|)^T \mathds {1} \right)_{t}\end{pmatrix} \right\|_{\infty} \\
        & \hspace{-2cm}\leq (2\delta+\delta^2) \left\| \left( |\nu^{-1}| \cdot |W_{\nu}| \right) \begin{pmatrix} \|M_1\| \\ \vdots \\ \|M_t \| \end{pmatrix} \right\|_{\infty} + (2\delta + \delta^2)\left((1+\delta)^4 - 1\right) (k/k_0^L) \left\| \left( | \nu^{-1} |\cdot |W_{\nu}|\right) \left( (|\phi| \cdot |
        U_{\phi}|)^T\mathds{1} \odot (|\psi| \cdot |V_{\psi}|)^T\mathds{1} \right) \right\|_{\infty}E^{L-1} \|A\| \|B\|\\
        & \hspace{-2cm} \leq (2\delta + \delta^2)   (k/k_0^L)\left\|\left( | \nu^{-1} |\cdot |W_{\nu}|\right) \left( (|\phi| \cdot |
        U_{\phi}|)^T\mathds{1} \odot (|\psi| \cdot |V_{\psi}|)^T\mathds{1} \right) \right\|_{\infty} E^{L-1} \|A\| \|B\| + (2\delta + \delta^2)\left( (1+\delta)^4 - 1\right) (k/k_0^L) E^{L} \|A\|\|B\| \\
        & \hspace{-2cm} = \left( (1+\delta)^6 - (1+\delta)^4 \right) (k/k_0^L) E^L \|A\| \|B\|
        \endaligned
        $}
    \end{equation}
    where we bound $\|M_j\|$ by repeating the argument made above. Summing \eqref{eqn: part_two_part_one} and \eqref{eqn: part_two_part_two} yields  
    \begin{equation}
        \| C-\widehat{C}\| \leq \left( (1+\delta)^6 - 1 \right) (k/k_0^L) E^L \|A\|\|B\|.
    \end{equation}
    Combining this result with \eqref{eqn: appendix_term_one} implies that the stated bound holds for $L$ if it holds for $L-1$. To complete the proof, we note that the base case (when $L = 1$) follows from the argument used to bound the first term $\|C - \widehat{C} \|$, substituting \eqref{eqn: sub-call bound} for the simpler bound $(k/k_0)\|A\| \|B\|$ (since in this case the recursive call is replaced by matrix  multiplication).
\end{proof}

\section{Encoding Matrices for $\langle 2,4,5; 31 \rangle$ }
\setcounter{MaxMatrixCols}{31}
This appendix contains encoding/residual matrices for a new APA $\langle 2,4,5; 31 \rangle$ scheme, which establishes 31 as an upper bound on the border rank of ${\mathcal M}_{\langle 2,4,5 \rangle}$. The corresponding algorithm approximates the product of $A \in {\mathbb C}^{2 \times 4}$ and $B \in {\mathbb C}^{4 \times 5}$ -- i.e., 
\begin{equation}
    \begin{pmatrix} a_{11} & a_{12} & a_{13} & a_{14} \\
    a_{21} & a_{22} & a_{23} & a_{24} \end{pmatrix} \times \begin{pmatrix}
        b_{11} & b_{12} & b_{13} & b_{14} & b_{15} \\
         b_{21} & b_{22} & b_{23} & b_{24} & b_{25} \\
          b_{31} & b_{32} & b_{33} & b_{34} & b_{35} \\
           b_{41} & b_{42} & b_{43} & b_{44} & b_{45} \\
    \end{pmatrix} = \begin{pmatrix} c_{11} & c_{12} & c_{13} & c_{14} & c_{15} \\
    c_{21} & c_{22} & c_{23} & c_{24} & c_{25} \end{pmatrix},
\end{equation}
and outputs
\begin{equation}
    AB + \varepsilon^3R_3 + \varepsilon^4R_4 + \varepsilon^5R_5 + \varepsilon^6R_6 + \varepsilon^7R_7 + \varepsilon^8R_8
\end{equation}
for a set of residual matrices $R_3, R_4, R_5,R_6, R_7, R_8$ that depend on $A$ and $B$. Note that the search algorithm discussed in \cref{section: search} actually found several $\langle 2,4,5; 31 \rangle$ schemes; the one shown here achieves the largest improvement to its leading coefficient via sparsification (see row 15 of \cref{tab:results_summary2}).

\subsection{Standard Basis Encoding Matrices }\label{ap: encoding_matrices}
We first provide the encoding matrices for this scheme in the standard basis: 
\vspace{2mm}
\begin{equation*}
\resizebox{\textwidth}{!}{
$\displaystyle
    U(\varepsilon)^T = \begin{pmatrix}
0 &0 &\varepsilon^2 &0 &\varepsilon^{-2} &\varepsilon^{-2} &0 &-\varepsilon \\
\varepsilon^{-1} &0 &-\varepsilon^2 &0 &-\varepsilon^{-2} &0 &\varepsilon &0 \\
0 &0 &0 &0 &\varepsilon^{-2} &0 &-\varepsilon &0 \\
0 &\varepsilon^{-1} &0 &0 &\varepsilon^{-2} &0 &-\varepsilon &0 \\
0 &0 &0 &0 &\varepsilon^{-2} &\varepsilon^{-2} &0 &-\varepsilon \\
0 &0 &-\varepsilon^2 &\varepsilon^2 &-\varepsilon^{-2} &0 &\varepsilon &-\varepsilon \\
0 &0 &-\varepsilon^2 &\varepsilon^2 &0 &0 &\varepsilon &-\varepsilon \\
0 &0 &\varepsilon^2 &0 &0 &0 &-\varepsilon &0 \\
0 &0 &-\varepsilon^2 &\varepsilon^2 &-\varepsilon^{-2} &-\varepsilon^{-2} &0 &0 \\
0 &0 &\varepsilon^2 &0 &-\varepsilon^{-2} &-\varepsilon^{-2} &0 &0 \\
0 &0 &-\varepsilon^2 &\varepsilon^2 &0 &0 &0 &0 \\
\varepsilon^{-1} &\varepsilon^{-1} &0 &-\varepsilon^2 &-\varepsilon^{-2} &-\varepsilon^{-2} &0 &\varepsilon \\
0 &\varepsilon^{-1} &0 &0 &0 &0 &0 &0 \\
0 &\varepsilon^{-1} &0 &0 &-\varepsilon^{-2} &-\varepsilon^{-2} &0 &\varepsilon \\
\varepsilon^{-1} &\varepsilon^{-1} &0 &0 &-\varepsilon^{-2} &-\varepsilon^{-2} &0 &0 \\
0 &0 &-\varepsilon^2 &\varepsilon^2 &-\varepsilon^{-2} &0 &\varepsilon &0 \\
\varepsilon^{-1} &0 &0 &0 &0 &0 &0 &0 \\
0 &0 &0 &0 &-\varepsilon^{-2} &0 &-\varepsilon &\varepsilon \\
0 &0 &0 &0 &\varepsilon^{-2} &\varepsilon^{-2} &0 &0 \\
0 &0 &0 &0 &\varepsilon^{-2} &\varepsilon^{-2} &\varepsilon &-\varepsilon \\
0 &\varepsilon^{-1} &0 &0 &0 &-\varepsilon^{-2} &0 &0 \\
0 &\varepsilon^{-1} &0 &0 &-\varepsilon^{-2} &-\frac{2}{\varepsilon^2} &-\varepsilon &\varepsilon \\
0 &0 &0 &0 &-\varepsilon^{-2} &0 &0 &\varepsilon \\
0 &0 &\varepsilon^2 &0 &0 &0 &0 &0 \\
0 &0 &\varepsilon^2 &0 &\varepsilon^{-2} &0 &-\varepsilon &0 \\
0 &0 &-\varepsilon^2 &\varepsilon^2 &\varepsilon^{-2} &\varepsilon^{-2} &0 &-\varepsilon \\
0 &0 &0 &0 &\varepsilon^{-2} &0 &0 &0 \\
-\varepsilon^{-1} &0 &-\varepsilon^2 &\varepsilon^2 &\varepsilon^{-2} &0 &\varepsilon &-\varepsilon \\
0 &0 &\varepsilon^2 &0 &\varepsilon^{-2} &0 &-\varepsilon &-\varepsilon \\
0 &0 &0 &0 &\varepsilon^{-2} &\varepsilon^{-2} &-\varepsilon &0 \\
\varepsilon^{-1} &\varepsilon^{-1} &-\varepsilon^2 &0 &0 &0 &0 &0
\end{pmatrix} \quad W(\varepsilon)^T = \begin{pmatrix}
2\varepsilon &3\varepsilon^2 &-\varepsilon &0 &-1 &-2\varepsilon &0 &-\varepsilon^{-2} &-\varepsilon^{-3} &-\varepsilon^{-2} \\
0 &0 &0 &0 &-1 &0 &-\varepsilon^{-3} &0 &0 &0 \\
0 &0 &0 &0 &-1 &-\varepsilon &-\varepsilon^{-3} &-\varepsilon^{-2} &0 &0 \\
0 &-\varepsilon^2 &0 &0 &1 &\varepsilon &\varepsilon^{-3} &\varepsilon^{-2} &0 &0 \\
2\varepsilon &2\varepsilon^2 &\varepsilon &\varepsilon^2 &-1 &-\varepsilon &0 &0 &-\varepsilon^{-3} &-\varepsilon^{-2} \\
0 &-2\varepsilon^2 &-\varepsilon &-2\varepsilon^2 &0 &0 &0 &0 &0 &\varepsilon^{-2} \\
0 &2\varepsilon^2 &2\varepsilon &2\varepsilon^2 &0 &0 &0 &0 &0 &-\varepsilon^{-2} \\
0 &-3\varepsilon^2 &2\varepsilon &0 &0 &2\varepsilon &0 &\varepsilon^{-2} &0 &\varepsilon^{-2} \\
-3\varepsilon &-2\varepsilon^2 &-3\varepsilon &-2\varepsilon^2 &1 &0 &\varepsilon^{-3} &0 &\varepsilon^{-3} &\varepsilon^{-2} \\
3\varepsilon &2\varepsilon^2 &\varepsilon &\varepsilon^2 &-1 &-\varepsilon &-\varepsilon^{-3} &-\varepsilon^{-2} &-\varepsilon^{-3} &-\varepsilon^{-2} \\
3\varepsilon &3\varepsilon^2 &\varepsilon &\varepsilon^2 &-1 &-\varepsilon &0 &0 &-\varepsilon^{-3} &-\varepsilon^{-2} \\
2\varepsilon &0 &\varepsilon &0 &-1 &0 &0 &0 &-\varepsilon^{-3} &0 \\
\varepsilon &\varepsilon^2 &0 &0 &-1 &-\varepsilon &-\varepsilon^{-3} &-\varepsilon^{-2} &0 &0 \\
2\varepsilon &-\varepsilon^2 &\varepsilon &0 &-1 &\varepsilon &0 &\varepsilon^{-2} &-\varepsilon^{-3} &0 \\
-3\varepsilon &0 &-\varepsilon &0 &1 &0 &\varepsilon^{-3} &0 &\varepsilon^{-3} &0 \\
0 &3\varepsilon^2 &0 &\varepsilon^2 &1 &-\varepsilon &\varepsilon^{-3} &0 &0 &-\varepsilon^{-2} \\
2\varepsilon &0 &\varepsilon &0 &0 &0 &0 &0 &-\varepsilon^{-3} &0 \\
0 &0 &\varepsilon &\varepsilon^2 &0 &0 &0 &0 &0 &0 \\
3\varepsilon &3\varepsilon^2 &\varepsilon &\varepsilon^2 &-1 &-\varepsilon &-\varepsilon^{-3} &-\varepsilon^{-2} &-\varepsilon^{-3} &-\varepsilon^{-2} \\
0 &-\varepsilon^2 &2\varepsilon &\varepsilon^2 &0 &\varepsilon &0 &\varepsilon^{-2} &0 &0 \\
0 &\varepsilon^2 &0 &0 &0 &-\varepsilon &0 &-\varepsilon^{-2} &0 &0 \\
0 &-\varepsilon^2 &\varepsilon &0 &0 &\varepsilon &0 &\varepsilon^{-2} &0 &0 \\
0 &3\varepsilon^2 &0 &\varepsilon^2 &0 &-2\varepsilon &0 &-\varepsilon^{-2} &0 &-\varepsilon^{-2} \\
-2\varepsilon &-2\varepsilon^2 &-\varepsilon &-\varepsilon^2 &1 &\varepsilon &\varepsilon^{-3} &\varepsilon^{-2} &\varepsilon^{-3} &\varepsilon^{-2} \\
0 &2\varepsilon^2 &0 &\varepsilon^2 &-1 &-\varepsilon &-\varepsilon^{-3} &-\varepsilon^{-2} &0 &-\varepsilon^{-2} \\
-2\varepsilon &-3\varepsilon^2 &-\varepsilon &-\varepsilon^2 &1 &\varepsilon &0 &0 &\varepsilon^{-3} &\varepsilon^{-2} \\
0 &-2\varepsilon^2 &0 &-\varepsilon^2 &0 &0 &0 &0 &0 &\varepsilon^{-2} \\
0 &0 &-\varepsilon &0 &0 &0 &0 &0 &0 &0 \\
0 &3\varepsilon^2 &-\varepsilon &0 &0 &-2\varepsilon &0 &-\varepsilon^{-2} &0 &-\varepsilon^{-2} \\
0 &\varepsilon^2 &0 &0 &0 &0 &0 &0 &0 &0 \\
\varepsilon &0 &0 &0 &0 &0 &0 &0 &0 &0
\end{pmatrix}
$}
\end{equation*}

\begin{equation*}
    \resizebox{\textwidth}{!}{
    $\displaystyle
    V(\varepsilon)^T = \begin{pmatrix}
0 &0 &-\varepsilon &\varepsilon^4 &0 &0 &0 &0 &0 &0 &0 &0 &-\varepsilon^{-2} &\varepsilon &0 &0 &0 &-\varepsilon^{-2} &\varepsilon &0 \\
0 &0 &0 &-\varepsilon^4 &0 &0 &0 &-\varepsilon &\varepsilon^4 &\varepsilon^4 &0 &0 &0 &0 &0 &0 &0 &0 &0 &0 \\
-1 &0 &0 &0 &-2\varepsilon^4 &1 &0 &\varepsilon &0 &\varepsilon^4 &0 &0 &\varepsilon^{-2} &0 &-\varepsilon &0 &0 &0 &0 &0 \\
-1 &0 &0 &0 &-2\varepsilon^4 &1 &0 &\varepsilon &0 &\varepsilon^4 &0 &0 &0 &0 &0 &0 &0 &0 &0 &0 \\
0 &0 &2\varepsilon &-2\varepsilon^4 &0 &0 &0 &-\varepsilon &\varepsilon^4 &0 &0 &0 &\varepsilon^{-2} &-\varepsilon &0 &\varepsilon^{-3} &0 &\varepsilon^{-2} &-\varepsilon &3\varepsilon \\
0 &1 &0 &0 &\varepsilon^4 &0 &0 &0 &0 &0 &0 &0 &0 &0 &0 &0 &\varepsilon^{-3} &0 &0 &\varepsilon \\
0 &1 &\varepsilon &-\varepsilon^4 &\varepsilon^4 &0 &0 &0 &0 &0 &0 &0 &0 &0 &0 &0 &\varepsilon^{-3} &-\varepsilon^{-2} &0 &3\varepsilon \\
0 &0 &-\varepsilon &0 &0 &0 &0 &-\varepsilon &\varepsilon^4 &\varepsilon^4 &0 &\varepsilon^{-3} &-\varepsilon^{-2} &0 &\varepsilon &0 &\varepsilon^{-3} &-\varepsilon^{-2} &0 &\varepsilon \\
0 &0 &0 &0 &0 &0 &0 &0 &0 &0 &0 &0 &0 &0 &0 &0 &0 &-\varepsilon^{-2} &\varepsilon &\varepsilon \\
1 &0 &0 &0 &2\varepsilon^4 &0 &0 &-\varepsilon &\varepsilon^4 &\varepsilon^4 &\varepsilon^{-3} &0 &0 &0 &2\varepsilon &\varepsilon^{-3} &0 &0 &0 &2\varepsilon \\
0 &0 &0 &0 &0 &0 &0 &0 &0 &0 &0 &0 &0 &0 &0 &\varepsilon^{-3} &0 &-\varepsilon^{-2} &\varepsilon &3\varepsilon \\
0 &0 &-\varepsilon &\varepsilon^4 &0 &0 &0 &0 &0 &0 &0 &0 &0 &0 &0 &0 &0 &0 &0 &0 \\
-1 &0 &0 &0 &-2\varepsilon^4 &1 &0 &0 &0 &2\varepsilon^4 &0 &0 &0 &0 &0 &0 &0 &0 &0 &0 \\
0 &0 &\varepsilon &-\varepsilon^4 &0 &0 &0 &-\varepsilon &\varepsilon^4 &0 &0 &0 &0 &0 &0 &0 &0 &0 &0 &0 \\
0 &0 &0 &0 &0 &0 &0 &-\varepsilon &\varepsilon^4 &\varepsilon^4 &0 &0 &0 &0 &0 &0 &0 &0 &0 &0 \\
0 &0 &0 &0 &0 &0 &0 &0 &0 &0 &0 &0 &0 &0 &0 &0 &0 &\varepsilon^{-2} &0 &-\varepsilon \\
0 &0 &\varepsilon &-\varepsilon^4 &-\varepsilon^4 &0 &0 &-\varepsilon &\varepsilon^4 &\varepsilon^4 &0 &0 &0 &0 &0 &0 &0 &0 &0 &0 \\
0 &0 &0 &0 &0 &0 &1 &0 &0 &\varepsilon^4 &0 &-\varepsilon^{-3} &0 &0 &-\varepsilon &0 &0 &0 &0 &0 \\
1 &0 &0 &0 &2\varepsilon^4 &0 &0 &0 &0 &0 &\varepsilon^{-3} &0 &0 &0 &2\varepsilon &\varepsilon^{-3} &0 &\varepsilon^{-2} &-\varepsilon &\varepsilon \\
0 &0 &-\varepsilon &\varepsilon^4 &0 &0 &1 &0 &0 &\varepsilon^4 &0 &0 &-\varepsilon^{-2} &\varepsilon &0 &0 &0 &0 &0 &-\varepsilon \\
0 &0 &\varepsilon &-\varepsilon^4 &0 &0 &1 &0 &\varepsilon^4 &0 &0 &0 &0 &0 &0 &0 &0 &0 &0 &0 \\
0 &0 &0 &0 &0 &0 &1 &0 &0 &\varepsilon^4 &0 &0 &0 &0 &0 &0 &0 &0 &0 &0 \\
0 &0 &\varepsilon &-\varepsilon^4 &0 &0 &0 &-\varepsilon &\varepsilon^4 &0 &0 &\varepsilon^{-3} &0 &0 &\varepsilon &0 &\varepsilon^{-3} &-\varepsilon^{-2} &0 &2\varepsilon \\
1 &0 &0 &0 &2\varepsilon^4 &0 &0 &-\varepsilon &\varepsilon^4 &\varepsilon^4 &\varepsilon^{-3} &0 &-\varepsilon^{-2} &\varepsilon &3\varepsilon &\varepsilon^{-3} &0 &-\varepsilon^{-2} &\varepsilon &3\varepsilon \\
0 &0 &0 &-\varepsilon^4 &0 &0 &0 &-\varepsilon &\varepsilon^4 &\varepsilon^4 &0 &0 &-\varepsilon^{-2} &0 &\varepsilon &0 &0 &-\varepsilon^{-2} &0 &\varepsilon \\
0 &0 &\varepsilon &-\varepsilon^4 &0 &0 &0 &0 &0 &0 &0 &0 &0 &0 &0 &\varepsilon^{-3} &0 &0 &0 &3\varepsilon \\
0 &1 &-\varepsilon &0 &2\varepsilon^4 &0 &0 &0 &0 &0 &0 &0 &-\varepsilon^{-2} &0 &\varepsilon &0 &\varepsilon^{-3} &-\varepsilon^{-2} &0 &2\varepsilon \\
0 &1 &0 &0 &\varepsilon^4 &0 &0 &0 &0 &0 &0 &0 &0 &0 &0 &0 &0 &0 &0 &0 \\
0 &0 &0 &0 &0 &0 &0 &0 &0 &0 &0 &\varepsilon^{-3} &0 &0 &\varepsilon &0 &\varepsilon^{-3} &0 &0 &\varepsilon \\
-1 &0 &\varepsilon &-\varepsilon^4 &-2\varepsilon^4 &1 &0 &\varepsilon &0 &\varepsilon^4 &-\varepsilon^{-3} &0 &0 &0 &-2\varepsilon &0 &0 &0 &0 &\varepsilon \\
1 &0 &0 &0 &2\varepsilon^4 &0 &0 &-\varepsilon &\varepsilon^4 &\varepsilon^4 &0 &0 &0 &0 &0 &0 &0 &0 &0 &0
\end{pmatrix}
$}
\end{equation*}

\pagebreak
\subsection{Residual Matrices }
The remainder matrices $R_3,R_4,R_5,R_6,R_7,R_8$ for the corresponding algorithm are given below:

\begin{equation*}
\resizebox{\textwidth}{!}{
$\displaystyle
R_3 = \begin{pmatrix}
    0& 2a_{23}(b_{13} + b_{23} + b_{34} + b_{45})- 2a_{24}(b_{13} - b_{44} + b_{45})
  & 0& 0& 0\\
    0& 0& 
   a_{23}(b_{13} + b_{23} + b_{34} + b_{45}) - a_{24}(b_{13} - b_{44} + b_{45}) & 0& 0
\end{pmatrix} 
$}
\end{equation*}

\begin{equation*}
\resizebox{\textwidth}{!}{
$\displaystyle
R_4 = 
\begin{pmatrix}
    0& -2a_{13}(b_{13} + b_{23} + b_{34} + b_{45})+ 2a_{14}(b_{13} - b_{44} + b_{45})
    & 0& 0& 0\\
   -a_{23}(b_{13} + b_{23} + b_{34} + b_{45}) + a_{24}(b_{13} - b_{44} + b_{45})
    & a_{23}(b_{13} + b_{23} + b_{34} + b_{45}) - a_{24}(b_{13} - b_{44} + b_{45})
    &
    -a_{13}(b_{13} + b_{23} + b_{34} + b_{45})+ a_{14}(b_{13} - b_{44} + b_{45})
    & 0& 0
\end{pmatrix}
$}
\end{equation*}

\begin{equation*}
\resizebox{\textwidth}{!}{
$\displaystyle
R_5 = 
\begin{pmatrix}
    0& 0& 0& 0& 0\\ a_{13}(b_{13} + b_{23} + b_{34} + b_{45})- a_{14}(b_{13} - b_{44} + b_{45})
    &  -a_{13}(b_{13} + b_{23} + b_{34} + b_{45})+ a_{14}(b_{13} - b_{44} + b_{45})
    & 0& 0& 0
\end{pmatrix}
$}
\end{equation*}

\vspace{4mm}

\begin{equation*}
R_6 = \begin{pmatrix}
    0& -2a_{23}(b_{24} +b_{25}) + 2a_{24}b_{14}& 0& 0& 0\\
    0& 0& -a_{23}(b_{24} + b_{25}) + a_{24}b_{14}& 0& 0
\end{pmatrix}
\end{equation*}

\vspace{4mm}

\begin{equation*}
R_7 = \begin{pmatrix}
    0& 2a_{13}(b_{24} + b_{25}) - 2a_{14}b_{14}& 0& 0& 0\\
    a_{23}(b_{24} + b_{25}) - a_{24}b_{14}& -a_{23}(b_{24} + b_{25}) + a_{24}b_{14}& a_{13}(b_{24} + b_{25}) - a_{14}b_{14}& 0& 0
\end{pmatrix}
\end{equation*}

\vspace{4mm}

\begin{equation*}
R_8 = \begin{pmatrix}
    0& 0& 0& 0& 0\\
    -a_{13}(b_{24} + b_{25}) + a_{14}b_{14}& a_{13}(b_{24} + b_{25}) - a_{14}b_{14}& 0& 0& 0
\end{pmatrix}
\end{equation*}

\subsection{Sparsified Encoding Matrices}\label{ap: sparsification_example}
Finally, sparsifying these algorithms as described in \cref{section: alternative_basis}, noting that the specialization of \cref{subsec: section_two_special_case} applies, yields the following basis transformations and (constant) encoding matrices:

\vspace{2mm}

\begin{equation*}
    \phi_{\varepsilon}^{-1}=\begin{pmatrix}
   \varepsilon^{-1} & \varepsilon^{-1} & 0 & 0&-\varepsilon^{-2}&-\varepsilon^{-2}&0&0\\
0&0&\varepsilon^2&-\varepsilon^2&-\varepsilon^{-2}&-\varepsilon^{-2}&0&\varepsilon \\ 
0&-\varepsilon^{-1}&0&0&-\varepsilon^{-2}&0&\varepsilon&0 \\
0&0&\varepsilon^2&0&-\varepsilon^{-2}&-\varepsilon^{-2}&0&0 \\
0&0&0&0&-\varepsilon^{-2}&-\varepsilon^{-2}&0&0 \\
0&0&0&0&\varepsilon^{-2}&0&-\varepsilon&0 \\
0&0&0&0&-\varepsilon^{-2}&-\varepsilon^{-2}& \varepsilon&0 \\
0&0&0&0&\varepsilon^{-2}&\varepsilon^{-2}&0&-\varepsilon \\
\end{pmatrix}
\end{equation*}

\vspace{4mm}

\begin{equation*}
\nu^{-1}_{\varepsilon} =\begin{pmatrix}
    0&0&\varepsilon&\varepsilon^2&0&0&0&0&0&0\\
-\varepsilon&-\varepsilon^2&0&0&0&0&0&0&0&0\\
0&-\varepsilon^2&0&0&0&0&0&0&0&0\\
0&3\varepsilon^2&0&\varepsilon^2&0&-2\varepsilon&0&-\varepsilon^{-2}&0&-\varepsilon^{-2}\\
-2\varepsilon&-2\varepsilon^2&-\varepsilon&-\varepsilon^2&1&\varepsilon&0&0&\varepsilon^{-3}&\varepsilon^{-2}\\
0&2\varepsilon^2&0&\varepsilon^2&0&0&0&0&0&-\varepsilon^{-2}\\
-3\varepsilon&-3\varepsilon^2&-\varepsilon&-\varepsilon^2&1&\varepsilon&\varepsilon^{-3}&\varepsilon^{-2}&\varepsilon^{-3}&\varepsilon^{-2}\\
0&0&0&0&1&\varepsilon&\varepsilon^{-3}&\varepsilon^{-2}&0&0\\
0&\varepsilon^2&0&0&0&-\varepsilon&0&-\varepsilon^{-2}&0&0\\
0&\varepsilon^2&-2\varepsilon&-\varepsilon^2&0&-\varepsilon&0&-\varepsilon^{-2}&0&0\\
\end{pmatrix}
\end{equation*}

\begin{equation*}
\resizebox{\textwidth}{!}{
$\displaystyle
\psi^{-1}_{\varepsilon} =\begin{pmatrix}
    -1&0&0&-\varepsilon^4&-2\varepsilon^4&1&-1&0&\varepsilon^4&\varepsilon^4&0&\varepsilon^{-3}&0&0&\varepsilon&0&\varepsilon^{-3}&-\varepsilon^{-2}&0&2\varepsilon\\
0&1&\varepsilon&-\varepsilon^4&\varepsilon^4&0&0&0&0&0&0&0&0&0&0&0&\varepsilon^{-3}&0&-\varepsilon&2\varepsilon\\
0&0&0&0&0&0&0&\varepsilon&0&-\varepsilon^4&0&0&0&0&0&0&0&0&0&0\\
-1&0&0&0&-2\varepsilon^4&0&0&\varepsilon&-\varepsilon^4&-\varepsilon^4&0&0&0&0&0&0&0&0&0&0\\
0&0&\varepsilon&0&0&0&0&\varepsilon&-\varepsilon^4&-\varepsilon^4&0&-\varepsilon^{-3}&\varepsilon^{-2}&0&-\varepsilon&0&-\varepsilon^{-3}&\varepsilon^{-2}&0&-\varepsilon\\
-1&0&0&0&-2\varepsilon^4&0&0&0&0&0&-\varepsilon^{-3}&0&0&0&-2\varepsilon&-\varepsilon^{-3}&0&-\varepsilon^{-2}&\varepsilon&-\varepsilon\\
0&0&0&0&0&0&-1&0&0&-\varepsilon^4&0&0&\varepsilon^{-2}&-\varepsilon&0&0&0&\varepsilon^{-2}&-\varepsilon&0\\
0&0&0&0&0&0&-1&0&0&-\varepsilon^4&0&\varepsilon^{-3}&0&0&\varepsilon&0&0&0&0&0\\
0&0&\varepsilon&-\varepsilon^4&-\varepsilon^4&0&0&-\varepsilon&\varepsilon^4&\varepsilon^4&0&0&0&0&0&0&0&0&0&0\\
0&0&\varepsilon&-\varepsilon^4&0&0&0&0&0&0&0&0&0&0&0&\varepsilon^{-3}&0&0&0&3\varepsilon\\
0&0&0&0&0&0&0&0&0&0&0&0&0&0&0&0&0&\varepsilon^{-2}&-\varepsilon&-\varepsilon\\
0&0&0&0&0&0&1&0&0&\varepsilon^4&0&0&0&0&0&0&0&0&0&0\\
-1&0&0&0&-2\varepsilon^4&0&0&\varepsilon&-\varepsilon^4&-\varepsilon^4&-\varepsilon^{-3}&0&0&0&-2\varepsilon&-\varepsilon^{-3}&0&0&0&-2\varepsilon\\
0&0&0&0&0&0&1&0&0&\varepsilon^4&0&-\varepsilon^{-3}&0&0&-\varepsilon&0&-\varepsilon^{-3}&\varepsilon^{-2}&0&-2\varepsilon\\
0&1&0&0&\varepsilon^4&0&1&0&0&\varepsilon^4&0&-\varepsilon^{-3}&0&0&-\varepsilon&0&0&0&0&0\\
0&0&0&0&0&0&-1&0&0&-\varepsilon^4&0&\varepsilon^{-3}&0&0&\varepsilon&0&\varepsilon^{-3}&0&0&\varepsilon\\
0&0&-\varepsilon&\varepsilon^4&0&0&1&0&0&\varepsilon^4&0&0&-\varepsilon^{-2}&\varepsilon&0&0&0&-\varepsilon^{-2}&\varepsilon&0\\
0&0&0&-\varepsilon^4&0&0&-1&-\varepsilon&\varepsilon^4&0&0&\varepsilon^{-3}&-\varepsilon^{-2}&0&2\varepsilon&0&\varepsilon^{-3}&-\varepsilon^{-2}&0&2\varepsilon\\
0&0&\varepsilon&-\varepsilon^4&0&0&1&-\varepsilon&\varepsilon^4&\varepsilon^4&0&0&0&0&0&0&0&0&0&0\\
-1&0&0&0&-2\varepsilon^4&1&0&\varepsilon&0&\varepsilon^4&0&0&0&0&0&0&0&0&0&0\\
\end{pmatrix}
$}
\end{equation*}

\vspace{2mm}

\begin{equation*}
    \resizebox{\textwidth}{!}{
    $\displaystyle
    U_{\phi}^T = \begin{pmatrix}
    0&0&0&1&-1&0&0&1\\
1&0&1&-1&0&0&0&0\\
0&0&0&0&0&1&0&0\\
0&0&-1&0&0&0&0&0\\
0&0&0&0&0&0&0&1\\
0&-1&0&0&1&-1&0&0\\
0&-1&0&0&0&0&1&0\\
0&0&0&1&0&0&-1&0\\
0&-1&0&0&1&0&0&-1\\
0&0&0&1&0&0&0&0\\
0&-1&0&0&0&0&0&-1\\
1&1&0&-1&0&0&0&0\\
0&0&-1&0&0&-1&0&0\\
0&0&-1&0&0&-1&0&-1\\
1&0&0&0&0&0&0&0\\
0&-1&0&0&0&-1&0&-1\\
1&0&1&0&-1&1&0&0\\
0&0&0&0&1&-1&-2&-1\\
0&0&0&0&-1&0&0&0\\
0&0&0&0&-1&0&1&1\\
0&0&-1&0&0&0&1&0\\
0&0&-1&0&1&0&0&-1\\
0&0&0&0&0&-1&-1&-1\\
0&0&0&1&-1&0&0&0\\
0&0&0&1&-1&1&0&0\\
0&-1&0&0&0&0&0&0\\
0&0&0&0&-1&1&1&0\\
-1&-1&-1&0&0&0&2&0\\
0&0&0&1&0&1&0&1\\
0&0&0&0&0&0&-1&0\\
1&0&0&-1&0&0&0&0\\
    \end{pmatrix} \quad W_{\nu}^T = \begin{pmatrix}
        0&0&0&0&-1&0&0&0&0&1\\
0&0&-1&0&0&0&0&-1&-1&0\\
0&0&0&0&0&0&0&-1&0&0\\
0&0&1&0&0&0&0&1&0&0\\
0&0&0&0&-1&0&0&0&0&0\\
-1&0&0&0&0&-1&0&0&0&0\\
0&0&0&0&0&1&0&0&1&-1\\
0&0&0&-1&0&0&0&0&1&-1\\
0&0&0&0&0&0&1&0&0&1\\
0&0&1&0&0&0&-1&0&0&0\\
0&-1&0&0&-1&0&0&0&0&0\\
0&0&0&-1&-1&0&0&0&1&0\\
0&-1&0&0&0&0&0&-1&0&0\\
0&0&0&-1&-1&0&0&0&0&0\\
0&0&0&0&0&1&1&0&1&0\\
0&0&0&1&0&0&0&1&0&0\\
0&1&0&0&0&-1&-1&1&0&0\\
1&0&0&0&0&0&0&0&0&0\\
0&0&0&0&0&0&-1&0&0&0\\
0&0&0&0&0&0&0&0&0&-1\\
0&0&0&0&0&0&0&0&1&0\\
-1&0&0&0&0&0&0&0&0&-1\\
0&0&0&1&0&0&0&0&0&0\\
0&-1&0&0&0&0&1&0&0&0\\
0&0&0&0&0&1&0&-1&0&0\\
0&0&1&0&1&0&0&0&0&0\\
0&0&0&0&0&-1&0&0&0&0\\
1&0&0&0&0&0&0&0&-1&1\\
-1&0&0&1&0&0&0&0&0&0\\
0&0&-1&0&0&0&0&0&0&0\\
0&-1&1&0&0&0&0&0&0&0\\
    \end{pmatrix}
    $}
\end{equation*}

\begin{equation*}
    \resizebox{\textwidth}{!}{
    $\displaystyle
    V_{\psi}^T = \begin{pmatrix}
        0&0&0&0&0&0&0&0&0&0&0&-1&0&0&0&0&1&0&0&0\\
1&0&0&0&0&0&0&0&0&0&0&0&0&1&0&0&0&0&0&-1\\
1&0&0&0&0&0&0&0&0&0&0&0&0&0&0&0&0&-1&0&0\\
0&0&0&0&0&0&0&0&0&0&0&0&0&0&0&0&0&0&0&1\\
0&0&0&0&0&0&1&0&0&1&0&0&0&0&0&0&0&0&1&0\\
0&0&0&0&0&0&0&0&0&0&0&0&0&0&1&1&0&0&0&0\\
0&1&0&0&0&0&0&0&0&0&-1&0&0&0&0&0&0&0&0&0\\
0&0&0&0&-1&0&0&0&0&0&0&0&0&0&0&0&0&0&0&0\\
0&0&0&0&0&0&0&0&0&0&-1&0&0&0&0&0&0&0&0&0\\
0&0&0&0&0&0&0&0&0&0&0&0&-1&0&0&0&0&0&0&0\\
0&-1&0&0&0&0&0&0&0&1&0&0&0&-1&1&0&0&0&0&0\\
0&0&0&0&0&0&1&0&0&0&0&0&0&0&0&0&1&0&0&0\\
0&0&-1&0&0&0&0&0&0&0&0&0&0&0&0&0&0&0&0&1\\
0&0&0&0&0&0&0&0&0&0&0&-1&0&0&0&0&0&0&1&0\\
0&0&0&0&0&1&0&0&0&0&1&0&-1&0&0&0&0&0&0&0\\
0&0&0&0&0&0&0&0&0&0&0&0&0&1&0&1&0&0&0&0\\
0&0&0&0&0&0&0&0&1&0&0&0&0&0&0&0&0&0&0&0\\
0&0&0&0&0&0&0&-1&0&0&0&0&0&0&0&0&0&0&0&0\\
0&0&0&0&0&-1&0&0&0&0&0&0&0&0&0&0&0&0&0&0\\
0&0&0&0&0&0&0&0&0&0&1&0&0&0&0&0&1&0&0&0\\
0&0&1&0&0&0&0&0&0&0&0&0&0&0&0&0&0&0&1&0\\
0&0&0&0&0&0&0&0&0&0&0&1&0&0&0&0&0&0&0&0\\
0&0&0&0&0&0&0&0&0&0&0&0&0&-1&0&0&0&0&1&0\\
0&0&0&0&1&0&0&0&0&0&0&0&-1&0&0&0&1&1&0&0\\
0&0&0&0&0&0&0&0&0&0&0&0&0&0&0&-1&0&1&0&0\\
0&0&0&0&0&0&0&0&0&1&0&0&0&0&0&0&0&0&0&0\\
0&0&0&0&0&0&0&0&-1&0&0&0&0&0&1&0&0&1&0&0\\
0&0&0&0&0&0&0&1&0&0&0&0&0&0&1&0&0&0&0&0\\
0&0&0&0&0&0&0&0&0&0&0&1&0&0&0&1&0&0&0&0\\
0&0&0&-1&0&0&0&0&0&1&0&0&1&0&0&0&0&0&0&1\\
0&0&0&-1&0&0&0&0&0&0&0&0&0&0&0&0&0&0&0&0\\
    \end{pmatrix}
    $}
\end{equation*}

\end{document}